\documentclass[11pt]{amsart}

\usepackage[a4paper,margin=1.15in]{geometry}
\usepackage[T1]{fontenc}
\usepackage[utf8]{inputenc}
\usepackage{lmodern}
\usepackage{amsmath,amssymb,amsthm,mathtools}
\usepackage{silence}
\usepackage{microtype}
\usepackage{enumitem}
\usepackage{aliascnt}
\usepackage{hyperref}
\hypersetup{
  colorlinks=true,
  linkcolor=blue,
  citecolor=blue,
  urlcolor=blue
}
\usepackage[nameinlink,noabbrev]{cleveref}

\numberwithin{equation}{section}

\newtheorem{theorem}{Theorem}[section]

\newaliascnt{proposition}{theorem}
\newtheorem{proposition}[proposition]{Proposition}
\aliascntresetthe{proposition}

\newaliascnt{lemma}{theorem}
\newtheorem{lemma}[lemma]{Lemma}
\aliascntresetthe{lemma}

\newaliascnt{corollary}{theorem}
\newtheorem{corollary}[corollary]{Corollary}
\aliascntresetthe{corollary}

\newaliascnt{definition}{theorem}
\newtheorem{definition}[definition]{Definition}
\aliascntresetthe{definition}

\newaliascnt{remark}{theorem}
\newtheorem{remark}[remark]{Remark}
\aliascntresetthe{remark}

\newaliascnt{example}{theorem}
\newtheorem{example}[example]{Example}
\aliascntresetthe{example}

\crefname{theorem}{theorem}{theorems}
\Crefname{theorem}{Theorem}{Theorems}
\crefname{proposition}{proposition}{propositions}
\Crefname{proposition}{Proposition}{Propositions}
\crefname{lemma}{lemma}{lemmas}
\Crefname{lemma}{Lemma}{Lemmas}
\crefname{corollary}{corollary}{corollaries}
\Crefname{corollary}{Corollary}{Corollaries}
\crefname{definition}{definition}{definitions}
\Crefname{definition}{Definition}{Definitions}
\crefname{remark}{remark}{remarks}
\Crefname{remark}{Remark}{Remarks}
\crefname{equation}{equation}{equations}
\Crefname{equation}{Equation}{Equations}
\crefname{section}{section}{sections}
\Crefname{section}{Section}{Sections}
\crefname{subsection}{subsection}{subsections}
\Crefname{subsection}{Subsection}{Subsections}

\newtheorem{letteredtheorem}{Theorem}

\newaliascnt{letteredcorollary}{letteredtheorem}
\newtheorem{letteredcorollary}[letteredcorollary]{Corollary}
\aliascntresetthe{letteredcorollary}

\newaliascnt{letteredproposition}{letteredtheorem}

\aliascntresetthe{letteredproposition}

\newaliascnt{letteredlemma}{letteredtheorem}

\aliascntresetthe{letteredlemma}

\crefname{letteredtheorem}{theorem}{theorems}
\Crefname{letteredtheorem}{Theorem}{Theorems}
\crefname{letteredcorollary}{corollary}{corollaries}
\Crefname{letteredcorollary}{Corollary}{Corollaries}
\crefname{letteredproposition}{proposition}{propositions}
\Crefname{letteredproposition}{Proposition}{Propositions}
\crefname{letteredlemma}{lemma}{lemmas}
\Crefname{letteredlemma}{Lemma}{Lemmas}

\newcommand{\R}{\mathbb R}
\newcommand{\C}{\mathbb C}

\newcommand{\id}{\mathrm{id}}
\newcommand{\loc}{\mathrm{loc}}

\newcommand{\scal}{\operatorname{scal}}
\newcommand{\Ric}{\operatorname{Ric}}

\newcommand{\Ahat}{\widehat{A}}
\newcommand{\dmu}{\,d\mu_g}

\DeclareMathOperator{\ind}{ind}
\DeclareMathOperator{\ch}{ch}

\title{Geometric rigidity via almost-harmonic twisted spinors}
\author{Francesco Bei and Simone Cecchini}
\thanks{SC was supported by a grant from the Simons Foundation (MPS-TSM-00007902, SC)}
\thanks{FB was partially supported by 2024 Sapienza research
grant {\em New research trends in Mathematics at Castelnuovo} and INdAM-GNSAGA ''Gruppo Nazionale per le Strutture Algebriche, Geometriche e le loro Applicazioni''}
\date{}
\subjclass[2020]{53C24 (Primary); 53C21, 53C25, 53C27, 58J20, 58J50 (Secondary).}

\begin{document}

\begin{abstract}
We establish sharp scalar-curvature bounds and rigidity consequences of
Gromov's exact-lift two-form method. Let \((M^n,g)\), \(n\geq 4\) even, be a
closed spin Riemannian manifold carrying a homologically
\(\Ahat\)-non-singular closed two-form \(\omega\) whose lift to the universal
cover \(X\) is exact. Then
\[
    \inf_M\scal_g
    \leq -\frac{4n}{n-1}\lambda_0(X).
\]
Equality forces \(g\) to be Einstein; if \(\lambda_0(X)>0\), then \(X\) is
real hyperbolic, while if \(\lambda_0(X)=0\) and
\(\int_M\omega^{n/2}\neq 0\), then \(g\) is flat.

The proof combines Gromov's twisted \(L^2\)-index with a conformal
interpretation of the refined Kato equality and a recentering argument. The
same method yields untwisted rigidity results when zero belongs to the
spectrum of the Dirac operator on the universal cover, with applications to
nonvanishing \(\widehat A\)-genus and enlargeability.
\end{abstract}

\maketitle

\begingroup
\linespread{0.90}\selectfont
\tableofcontents
\endgroup

\section{Introduction}
\label{sec:introduction}

The use of closed two-forms whose lifts to universal covers become exact is one of Gromov's influential ideas in global geometry.
In his work on K\"ahler hyperbolicity \cite{Gromov1991}, a closed K\"ahler manifold \((M,\omega)\) is
called K\"ahler hyperbolic if the lift of its K\"ahler form to the universal cover
is \(d\)-bounded, that is, if \(\pi^*\omega=d\eta\) for a bounded one-form
\(\eta\). Gromov showed that this condition has striking
\(L^2\)-Hodge-theoretic consequences: the \(L^2\)-cohomology of the universal
cover is concentrated in the middle degree, giving in particular the
Singer-type vanishing predicted for aspherical examples in this class, and
yielding the sign inequality \((-1)^m\chi(M)>0\) for compact K\"ahler hyperbolic manifolds of complex dimension \(m\).
More generally, if \(M^{2m}\) is closed and oriented, a closed two-form
\(\omega\in\Omega^2(M)\) is called homologically non-singular if
\[
        \int_M \omega^m\neq 0 .
\]
This condition records that the degree-two class \([\omega]\) carries non-trivial top-dimensional topology.
A central point in Gromov's work is that such degree-two topology, when combined with suitable control of the lifted form on the universal cover, can be converted into analytic information at infinity.
The method has since become an important tool in complex and Riemannian
geometry; see, for instance, \cite{JostZuo,CaoXavier}, and, for subsequent
developments concerning K\"ahler and weakly K\"ahler hyperbolic manifolds,
see \cite{BDET}.

The aim of the present paper is to study the implications of Gromov's two-form method for scalar curvature geometry.
One such implication appears in Gromov's quasisymplectic obstruction: if a closed orientable manifold carries a homologically non-singular two-form and its universal cover is contractible, then it admits no metric of positive scalar curvature \cite[Section 2.7]{GromovFourLectures}.
Since contractibility makes the lifted two-form exact, this obstruction suggests that topology detected by degree-two classes, when viewed on the universal cover, can impose strong constraints on scalar curvature.

In this paper, we show that exactness of the lifted two-form, without any
boundedness assumption on a primitive, converts this degree-two topology into
a sharp quantitative constraint on scalar curvature. Under a natural
\(\Ahat\)-refinement of homological non-singularity, we obtain the optimal
comparison between the infimum of the scalar curvature and the bottom of the spectrum of the
universal cover. We also establish rigidity at the sharp threshold: equality
forces the metric to be Einstein and, when the bottom of the spectrum is
positive, the universal cover to be real hyperbolic. In the top-degree case,
equality when the bottom of the spectrum is zero forces the metric to be flat.
The argument also yields a companion untwisted theorem, valid in every
dimension: whenever zero belongs to the \(L^2\)-spectrum of the spin Dirac
operator on the universal cover, the same sharp comparison holds, with
Einstein rigidity in the equality case and hyperbolic rigidity when the
bottom of the spectrum is positive. Both results arise from a common theory
of almost harmonic twisted spinors.

We use the following \(\Ahat\)-refinement of Gromov's homological non-singularity condition, adapted to spinorial techniques.
Let \(M^n\) be a closed connected oriented smooth manifold.
The \(\Ahat\)-class \(\Ahat(TM)\in H^{4*}(M;\mathbb Q)\) of \(TM\) is written as
\[
        \Ahat(TM)
        =1+\Ahat_1(TM)+\Ahat_2(TM)+\cdots,
\]
where \(\Ahat_q(TM)\in H^{4q}(M;\mathbb Q)\) is the component of degree \(4q\).
If \(g\) is a Riemannian metric on \(M\) with Levi-Civita connection \(\nabla^{TM}\), we denote by
\[
        \Ahat_q(\nabla^{TM})\in \Omega^{4q}(M)
\]
the corresponding Chern--Weil representative.

\begin{definition}
Let \(M^{2m}\) be a closed connected oriented smooth manifold. A closed
two-form \(\omega\in\Omega^2(M)\) is called homologically
\(\Ahat\)-non-singular if there exist integers \(p\geq 1\) and
\(q\geq 0\), with \(2p+4q=2m\), such that, for one (and hence every)
Riemannian metric \(g\) on \(M\),
\[
        \int_M \Ahat_q(\nabla^{TM})\wedge \omega^p \neq 0 .
\]
The usual homologically non-singular condition is the special case
\(p=m\), \(q=0\), namely
\[
        \int_M \omega^m\neq 0 .
\]
\end{definition}

The main geometric result is the following.

\begin{letteredtheorem}
\label{thm:main-two-form}
Let \((M^{2m},g_M)\), \(2m\geq 4\), be a closed connected spin Riemannian manifold, and let \(\pi:(X,g)\longrightarrow (M,g_M)\) be its universal Riemannian cover.
Suppose that \(M\) carries a closed homologically \(\Ahat\)-non-singular two-form \(\omega\in\Omega^2(M)\) whose lift to \(X\) is exact.
Let
\[
        \lambda_0=\lambda_0(X,g)
\]
be the bottom of the \(L^2\)-spectrum of the nonnegative scalar Laplacian on \((X,g)\).
Then
\begin{equation}\label{eq:scalar-bottom-estimate}
        \inf_M\scal_{g_M}
        \leq
        -\frac{4(2m)}{2m-1}\lambda_0.
\end{equation}
If equality holds in the previous inequality, then \(g\) is Einstein:
\[
        \Ric_g
        =
        -\frac{4\lambda_0}{2m-1}g.
\]
Furthermore, in the equality case, if \(\lambda_0>0\), then \((X,g)\) is isometric to real hyperbolic space with sectional curvature
\[
        \sec_g\equiv
        -\frac{4\lambda_0}{(2m-1)^2}.
\]
Finally, in the equality case, if \(\lambda_0=0\) and \(\omega\) is homologically non-singular in the top-degree sense,
\[
        \int_M\omega^m\neq 0,
\]
then \((X,g)\) is isometric to Euclidean space.
\end{letteredtheorem}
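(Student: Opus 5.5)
The plan is to produce almost-harmonic twisted spinors on \(X\) and to test the refined Kato inequality against them. Write \(\pi^*\omega=d\eta\). For a small parameter \(t\), consider the Hermitian line bundle \(L_t\) on \(X\) with unitary connection \(\nabla^t=d+it\eta\), whose curvature is \(it\,\pi^*\omega\). Since \(\pi^*\omega\) is exact, \(L_t\) is topologically trivial, yet it varies nontrivially with \(t\). Following Gromov's twisted \(L^2\)-index argument, the \(\Gamma\)-equivariant (Atiyah) \(L^2\)-index of the twisted Dirac operator \(D_t\) on \(S\otimes L_t\) is computed by the local index formula:
\[
\ind_\Gamma D_t=\int_M \Ahat(\nabla^{TM})\wedge e^{t\omega/2\pi}.
\]
This is a polynomial in \(t\), and homological \(\Ahat\)-non-singularity says exactly that the coefficient of \(t^p\) is nonzero. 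Hence \(\ind_\Gamma D_t\neq 0\) for all small \(t\neq 0\). Consequently, \(0\) lies in the \(L^2\)-spectrum of \(D_t\) on \(X\).

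From this I would extract, for every \(\varepsilon>0\), a compactly supported spinor \(\psi\) with
\[
\|D_t\psi\|_{L^2}\le\varepsilon\|\psi\|_{L^2}\quad\text{and}\quad |t|\,\|\pi^*\omega\|_\infty<\varepsilon.
\]
The curvature of \(L_t\) is pointwise bounded by \(|t|\,|\omega|\) since \(\omega\) lives on the compact \(M\), so the twisting term in the Lichnerowicz formula \(D_t^2=\nabla^*\nabla+\scal/4+\mathcal R_t\) is \(O(|t|)\).

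The key estimate combines the refined Kato inequality for almost-harmonic spinors,
\[
|\nabla\psi|^2\ge \frac{n}{n-1}\,\bigl|d|\psi|\bigr|^2-C|D\psi|^2,
\]
with the Rayleigh quotient of \(\lambda_0\). Set \(u=|\psi|\), and apply the inequality \(\int|du|^2\ge\lambda_0\int u^2\). Integrating Lichnerowicz then gives
\[
\varepsilon^2+O(t)\ \ge\ \frac{n}{n-1}\lambda_0+\frac14\inf\scal-o(1),
\]
where the \(o(1)\) absorbs the cross terms. Letting \(\varepsilon,t\to0\) yields \eqref{eq:scalar-bottom-estimate}.

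For rigidity I would take a sequence of such spinors that saturates every inequality. Equality in the Rayleigh quotient and equality in Kato both hold only asymptotically. The \emph{recentering} step uses the deck transformations to translate each \(\psi_k\) so that a definite fraction of its mass sits in a fixed fundamental domain. Since \(\Gamma\) acts cocompactly, I then pass to a \(C^\infty_{\loc}\) limit after normalizing, for instance \(\psi_k/\|\psi_k\|_{L^2(B)}\) on large balls. The limit \(\phi\) is a nonzero spinor, possibly not \(L^2\), on \(X\) (the twisting disappears as \(t\to0\)). It satisfies \(D\phi=0\), equality in the refined Kato inequality, and \(\Delta|\phi|=\lambda_0|\phi|\), and \(\scal\equiv\inf\scal\) wherever \(\phi\ne0\).

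The conformal interpretation of Kato equality is as follows. Equality holds iff \(\nabla\phi\) has the form of a twistor-type term built from \(d\log|\phi|\). Equivalently, \(\tilde\phi=|\phi|^{-(n-1)/(n-1)}\phi\) with a suitable power, after a conformal change \(\tilde g=|\phi|^{4/(n-1)}g\), becomes parallel. This should give Ricci-flatness of \(\tilde g\) together with a Hessian equation for \(f=\log|\phi|\). Then \(|df|^2=\lambda_0\cdot\frac{?}{}\) is constant, and the Hessian equation \(\nabla^2 f\propto (\ldots) g\) forces a warped product \(\R\times_{e^{cs}}N\) with \(N\) flat. This makes \(g\) Einstein with \(\Ric=-\frac{4\lambda_0}{n-1}g\). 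When \(\lambda_0>0\), a complete simply connected space of this form is hyperbolic space of curvature \(-4\lambda_0/(n-1)^2\).

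In the case \(\lambda_0=0\), \(\phi\) is parallel, so \(g\) is Ricci-flat, and the twisted index for \(p=m\) must be exploited further. The top-degree condition gives \(\ind D_t\sim ct^m\ne0\), and a parallel spinor in that Chern-character component should force the curvature to vanish. I would argue through the Bochner formula on \(\Lambda^2\), or via the fact that a Ricci-flat cover with parallel spinors and large twisted index has Euclidean volume growth, then apply Bishop–Gromov rigidity.

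The main obstacle is the recentering/limit step: producing a nonzero limit with all equalities pointwise from merely \(L^2\)-almost-harmonic sequences. Mass may spread out, so I would first try a concentration-compactness argument using the \(\Gamma\)-invariance of the estimates.
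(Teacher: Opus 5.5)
Your proof of the inequality is the paper's: the twisted index produces normalized (almost) harmonic twisted spinors, and then Lichnerowicz, refined Kato and Rayleigh give the bound. One correction is needed. $\eta$ is never $\Gamma$-invariant, since otherwise $\omega$ would be exact on $M$. So deck transformations lift to connection-preserving automorphisms of $(X\times\C,d+it\eta)$ only up to $U(1)$. The index you need is therefore the $L^2$-index for Gromov's central extension $\Gamma_t$, not Atiyah's $\Gamma$-index, and the same lifts are needed to translate twisted spinors when you recenter.

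The rigidity part has a genuine gap exactly at the step you flag. A definite fraction of mass in one fundamental domain is impossible here. Because $\Gamma$ is infinite, $X$ has no $L^2$ ground state: it would be positive, unique up to scale, and hence $\Gamma$-invariant. So the largest cell mass of a Rayleigh-saturating sequence tends to zero. After renormalizing by such a small local mass, the global $O(\delta_j)$ defects give no local control. Hence neither $C^\infty_{\loc}$ convergence, nor $D\phi=0$, nor Kato equality follows. Moreover, the Rayleigh defect $\int(|dr_j|^2-\lambda_0r_j^2)$ has no pointwise sign, so it cannot be localized, and $\Delta|\phi|=\lambda_0|\phi|$ is unjustified.

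Two ideas are missing. First, the ground-state transform with a fixed generalized ground state $f$. It rewrites the Rayleigh defect as $\int f^2|d(r_j/f)|^2$ and lets you trade the $|\psi_j|$-Kato defect for the $f$-Kato defect $P_f(\psi_j)$, a nonnegative density with $\int|P_f(\psi_j)|^2\le C\delta_j$. Second, a relative (pigeonhole) recentering. It chooses $\gamma_j$ so that, on every word-ball of radius $N\le j$ around $\gamma_j\mathcal F$, the mass is at most $A_Nm_j(\gamma_j)$ and the $P_f$-defect is at most $A_N\delta_jm_j(\gamma_j)$. You then multiply by $m_j(\gamma_j)^{-1/2}$ and let the recentered ground states converge to $f_\infty>0$. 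The conformal identity for $q^{-1}\beta_q\psi$ then gives $W^{1,2}_{\loc}$ bounds and a nonzero parallel limit spinor for $f_\infty^{4/(n-1)}g$. The conformal factor is this positive $f_\infty$, not $|\phi|$, which also avoids zero sets.

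The geometric endgames also fail as stated. For $\lambda_0>0$, write $k^2=4\lambda_0/(n-1)^2$. The Hessian equation gives $g=ds^2+e^{2ks}g_N$ with $g_N$ only Ricci-flat, because $\widehat g=dt^2+g_N$ is Ricci-flat; it does not give $g_N$ flat. Complete, simply connected, non-hyperbolic Einstein spaces of this form exist, e.g.\ $N=K3\times\R^{n-5}$ when $n\ge6$. Flatness needs the cocompact action. The paper notes that $\varphi=f_\infty^2$ is a positive harmonic function with $|\nabla\log\varphi|=(n-1)k$ and applies Ledrappier--Wang's rigidity theorem.

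For $\lambda_0=0$ and $\int_M\omega^m\ne0$, your Bochner/volume-growth sketch is not an argument. The paper's proof is topological. By Fischer--Wolf, a finite cover is $T^q\times N$ with $N$ simply connected and Ricci-flat. Exactness of $\pi^*\omega$ kills the $H^2(N)$-component of $[\widehat\omega]$. Then $[\widehat\omega]^m\ne0$ forces $q=2m$.

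Finally, you give no mechanism for the constancy of $\scal$. The paper proves it first, via a Wang--Zhu-type positivity principle: the ground-state transform plus Harnack and Poincar\'e inequalities on cells. Alternatively, you could carry the nonnegative defect $\int(\scal-\inf\scal)r_j^2\le C\delta_j$ through the recentering and combine it with the nonvanishing of the parallel limit.
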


The same analytic framework, specialized to \(\omega=0\), yields a companion
untwisted rigidity theorem, valid in every dimension \(n\geq 3\).

\begin{letteredtheorem}
\label{thm:main-zero-in-the-spectrum}
Let \((M^n,g_M)\), \(n\geq 3\), be a closed connected spin Riemannian manifold, and let \(\pi:(X,g)\longrightarrow (M,g_M)\) be its universal Riemannian cover.
Let \(\lambda_0=\lambda_0(X,g)\) be the bottom of the \(L^2\)-spectrum of the nonnegative scalar Laplacian on \((X,g)\).
Suppose that \(0\) belongs to the \(L^2\)-spectrum of the spinor Dirac operator on \((X,g)\).
Then
\[
        \inf_M\scal_{g_M}
        \leq
        -\frac{4n}{n-1}\lambda_0.
\]
Moreover,
\begin{enumerate}[label=\textup{(\roman*)}]
    \item if \(\lambda_0=0\), then equality holds if and only if \(g_M\) is Ricci-flat;
    \item if \(\lambda_0>0\), then equality holds if and only if \((X,g)\) is isometric to real hyperbolic space with sectional curvature
    \[
        \sec_g\equiv
        -\frac{4\lambda_0}{(n-1)^2}.
    \]
\end{enumerate}
\end{letteredtheorem}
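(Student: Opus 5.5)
Since $0$ lies in the $L^2$-spectrum of the Dirac operator $D$ on $(X,g)$ and $X$ covers a compact manifold, I will produce almost-harmonic spinors. These are compactly supported $\psi_k\in C_c^\infty(X,S)$ with $\|\psi_k\|_{L^2}=1$ and $\|D\psi_k\|_{L^2}\to 0$. The estimate then follows from three ingredients: the Lichnerowicz formula, the refined Kato inequality, and the variational characterization of $\lambda_0$. Integrating $D^2=\nabla^*\nabla+\scal_g/4$ against $\psi_k$ gives
\[
\|D\psi_k\|^2=\|\nabla\psi_k\|^2+\tfrac14\int_X \scal_g|\psi_k|^2 .
\]
For the connection term I split off the twistor part, $|\nabla\psi|^2=|P\psi|^2+\tfrac1n|D\psi|^2$. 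The refined Kato inequality for harmonic spinors, $|d|\psi||^2\le\frac{n-1}{n}|\nabla\psi|^2$, holds up to an error controlled by $|D\psi|\,|\nabla\psi|$ in the almost-harmonic case. Setting $f_k=|\psi_k|$, a Lipschitz compactly supported function with $\|f_k\|_{L^2}=1$, I expect to obtain
\[
\frac{n}{n-1}\lambda_0\le\frac{n}{n-1}\|df_k\|^2\le \|\nabla\psi_k\|^2+o(1)\le -\tfrac14\int \scal_g f_k^2+o(1)\le -\tfrac14\inf_M\scal_{g_M}+o(1).
\]
Letting $k\to\infty$ gives the inequality. The cross term from Kato is handled by Cauchy--Schwarz, because $\|\nabla\psi_k\|$ stays bounded: the Lichnerowicz identity bounds it using $\sup|\scal|<\infty$.

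For rigidity, assume equality. Then every inequality in the chain is asymptotically sharp. In particular $f_k$ is a minimizing sequence for the Rayleigh quotient, $\int(\scal_g-\inf\scal)f_k^2\to0$, and the refined Kato defect $\|\nabla\psi_k\|^2-\frac{n}{n-1}\|d|\psi_k|\|^2\to0$. The sequence may escape to infinity or spread out, so I will use a recentering argument. Cocompactness lets me translate by deck transformations and localize with cutoffs on balls of fixed radius. I choose a ball where the local $L^2$ mass and the local Kato defect are comparable to their global proportions. Then I normalize and pass to a limit using elliptic estimates for $D$, which give local $H^1$ bounds, and compactness of $M$. The limit is a nonzero spinor $\psi$ on a ball $B\subset X$ with $D\psi=0$, $\scal_g\equiv\inf\scal$ on the support, and equality in refined Kato: $\nabla_V\psi=-\frac{1}{n}\,V\cdot\ldots$, that is, $\nabla\psi=-\frac1n\sum e_i\otimes e_i\cdot\, d\log|\psi|\cdot\psi$ type structure.

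The conformal interpretation I plan to use is as follows. Equality in the refined Kato inequality for a harmonic spinor means $\psi/|\psi|^{(n-1)/n}$-type rescaling gives a parallel spinor for the conformal metric $\tilde g=|\psi|^{4/(n-1)}g$. Equivalently, $\psi$ corresponds to a parallel spinor of $\tilde g$ on $\{\psi\ne0\}$, so $\tilde g$ is Ricci-flat there. By unique continuation the statement holds on a dense open set.

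Rewriting Ricci-flatness of $\tilde g$ in terms of $g$ and $u=|\psi|^{2/(n-1)}$ gives a Hessian equation, $\mathrm{Ric}_g$ expressed through $\nabla^2\log u$ and $|d\log u|^2$. The minimizing property forces $f=|\psi|$ to satisfy $\Delta f=\lambda_0 f$ locally, and $\scal_g$ is constant. Combining these, I expect $\Ric_g=-\frac{4\lambda_0}{n-1}g$ and $\nabla^2 \phi=\frac{\lambda_0\cdot 4}{(n-1)^2}\,\phi\,g$ for a suitable power $\phi$ of $u$. Since $g$ is analytic when Einstein, this extends from the local limit to all of $M$ by deck invariance: the recentered limits cover every point up to translation.

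In case (i), $\lambda_0=0$, the Einstein constant is zero, so $g$ is Ricci-flat. In case (ii), a positive solution of $\nabla^2\phi=c^2\phi g$ on a complete simply connected manifold, obtained after continuing the local solution, gives a warped product $g=dr^2+e^{2cr}h$ with $h$ flat. By Obata/Tashiro-type arguments together with the Einstein condition, $X$ is hyperbolic of curvature $-c^2=-\frac{4\lambda_0}{(n-1)^2}$.

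For the converse directions, Ricci-flat closed spin manifolds have $\lambda_0=0$ and $\scal=0$, so equality holds trivially. Hyperbolic space has $\lambda_0=(n-1)^2c^2/4$ and $\scal=-n(n-1)c^2$, which matches the bound.

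The main obstacle is the recentering and compactness step. It must extract a nontrivial local limit in which the Kato defect vanishes, even though the $\psi_k$ may spread out with no concentration. My first attempt would be an averaging argument: the global ratio of defect to mass tends to zero, so some unit ball has a local ratio at most the global one. Interior Dirac estimates, with cutoff errors controlled by a uniform gradient-to-mass ratio over such balls, then give the limit. The second delicate point is globalizing the local solution of the Hessian equation when $\psi$ may vanish.
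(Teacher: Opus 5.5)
The inequality part of your proof is the paper's argument: Lichnerowicz, the refined Kato identity with the Dirac cross term handled by Cauchy--Schwarz, and Rayleigh. Your rigidity outline also has the paper's shape: recenter by deck transformations, produce a spinor parallel for a conformal metric, deduce Ricci-flatness, then Einstein.

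\textbf{The main gap} is the step ``the minimizing property forces $f=|\psi|$ to satisfy $\Delta f=\lambda_0 f$ locally.'' Your recentering can only localize integrals of nonnegative densities, such as mass, $|P_{r_k}(\psi_k)|^2$, $|\nabla\psi_k|^2$ and $(\scal_g-\inf_M\scal_{g_M})|\psi_k|^2$. The Rayleigh defect $\int_X|dr_k|^2\dmu-\lambda_0\int_X r_k^2\dmu$ is not of this form: its restriction to a ball is negative where $r_k$ is nearly constant and $\lambda_0>0$. After renormalizing by $m_k(\gamma_k)^{-1/2}$, which may blow up, its global smallness gives no local information.

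The data that survive in your limit cannot replace it. $D\psi=0$, equality in refined Kato and constant $\scal_g$ only say that $|\psi|^{4/(n-1)}g$ is Ricci-flat while $g$ has constant scalar curvature. For $r=|\psi|$ this is the single Yamabe-type equation $r\Delta_g r=\frac{n}{n-1}\lambda_0r^2-\frac1{n-1}|dr|^2$. It has many non-Einstein local solutions: take $g=\rho^{-2}g_{\mathrm{eucl}}$ on a ball, with $\rho>0$ any local solution making $\scal_g$ constant, and the harmonic spinor $q\,\beta_q^{-1}\widehat\psi$, $q=\rho^{(n-1)/2}$, built from a constant spinor $\widehat\psi$ via \Cref{lem:conformal-spinor-identity}. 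In \Cref{prop:conformal-einstein} it is exactly the ground-state equation, combined with the scalar equation, that gives $|d\rho|_{\widehat g}=k$ and $\Delta_{\widehat g}\rho=0$, hence $\nabla^2_{\widehat g}\rho=0$.

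The missing idea is the paper's fixed weight. Choose once and for all a positive generalized ground state $f$ and use the ground-state transform $\int_X(|dv|^2-\lambda_0v^2)\dmu=\int_Xf^2|d(v/f)|^2\dmu$. This makes the Rayleigh defect a nonnegative density. Since $|P_f(\psi_j)-P_{r_j}(\psi_j)|^2\le C_nf^2|d(r_j/f)|^2$, it also yields $\int_X|P_f(\psi_j)|^2\dmu\le C\delta_j$ (\Cref{prop:equality-defects}). After recentering with respect to this defect, the translated ground states converge, by Harnack and Schauder estimates, to a positive ground state $f_\infty$ on all of $X$. The limit is then a parallel spinor for $f_\infty^{4/(n-1)}g$ on all of $X$. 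There is no zero set, no continuation is needed, and the conformal factor is a ground state by construction.

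\textbf{Recentering.} A single averaging step controls one ratio at one scale. Cutoffs and a limit on arbitrary compact sets also require bounding the mass of enlarged neighbourhoods by the mass of the chosen cell. \Cref{lem:discrete-recentering} does this for all word balls $B_N(\gamma_j)$, $N\le j$, by a counting argument over the deck group.

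\textbf{Globalization and case~(ii).} These are not justified as written. Analyticity of Einstein metrics cannot be invoked before $g$ is known to be Einstein everywhere; what lets deck invariance finish is a ball of radius exceeding $\operatorname{diam}(M)$, or a limit on all of $X$. A locally positive solution of $\nabla^2\phi=c^2\phi g$, continued over $X$, need not stay positive. A global positive solution without critical points gives only $g=dr^2+e^{2cr}h$ with $h$ Ricci-flat, and flatness of $h$ requires the cocompact deck action. The paper instead gets the global positive harmonic function $\varphi=f_\infty^2$ with $|\nabla\log\varphi|=(n-1)k$ directly from $f_\infty$ and applies \cite[Theorem~6]{LedrappierWang}.

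By contrast, your route to constancy of $\scal_g$ through the localized density $(\scal_g-\inf_M\scal_{g_M})|\psi_k|^2$ is a workable alternative to the paper's positivity principle (\Cref{prop:wang-zhu-positivity}). It requires the limit to be nonzero on a set projecting onto $M$.
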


\begin{remark}
Wang--Zhu \cite{WangZhu2024} proved the sharp estimate and the corresponding
scalar curvature rigidity for rationally essential manifolds whose
fundamental group satisfies the strong Novikov conjecture. During the final
preparation of this manuscript, they posted the related paper
\cite{WangZhu2026}, in which they establish geometric rigidity at the same
sharp bottom-spectrum threshold under the topological assumptions of their
earlier work. The two works were obtained independently and offer
complementary approaches to rigidity. Wang--Zhu recover the missing Ricci
information directly from the limiting spinorial equality equations, while
our proof uses the conformal interpretation of the refined Kato defect to
construct a parallel spinor for a suitable conformally related metric.
\end{remark}

\begin{remark}
\Cref{thm:main-two-form} is not obtained from
\Cref{thm:main-zero-in-the-spectrum} simply by letting \(s\to 0\). Indeed,
if the primitive \(\eta\) were bounded, then
\[
        D_s-D_0=is\,c(\eta)
\]
would be a bounded perturbation whose operator norm tends to zero with
\(s\). Consequently, the normalized \(D_{s_j}\)-harmonic spinors furnished
by the central-extension index (see \Cref{subsec:Gromov-central-index})
would satisfy
\[
        \|D_0\psi_j\|_{L^2}
        \leq s_j\|\eta\|_{L^\infty}\|\psi_j\|_{L^2}
        \longrightarrow0,
\]
and hence would give \(0\in\sigma_{L^2}(D_0)\). Exactness of
\(\pi^*\omega\), however, does not provide a bounded primitive. For an
unbounded \(\eta\), Clifford multiplication by \(\eta\) is an unbounded
zeroth-order term; the domains of \(D_s\) and \(D_0\) need not agree, and
the preceding \(L^2\)-estimate is unavailable.
\end{remark}

An important source of the spectral hypothesis in
\Cref{thm:main-zero-in-the-spectrum} is compact enlargeability, in the
finite-cover sense recalled in \Cref{def:compact-enlargeability}. We prove in
\Cref{prop:compact-enlargeability-zero-spectrum} that compact enlargeability
and virtual spin imply that the spin Dirac operator on the universal cover has
zero in its \(L^2\)-spectrum. Closed oriented overtorical manifolds, namely
those admitting a map of nonzero degree to \(T^n\), provide a basic source of
compactly enlargeable examples; compare \cite[p.~210]{GromovLawson1980}.
Together with the Gromov--Lawson rigidity theorem
\cite[Theorem~A]{GromovLawson1980}, this gives the following consequence.

\begin{letteredcorollary}
\label{cor:compact-enlargeable-rigidity}
Let \((M^n,g_M)\), \(n\geq 3\), be a closed connected Riemannian manifold
whose underlying smooth manifold is compactly enlargeable and virtually
spin, and let
\(\pi:(X,g)\longrightarrow(M,g_M)\) be its universal Riemannian cover. Then
\[
        \inf_M\scal_{g_M}
        \leq
        -\frac{4n}{n-1}\lambda_0(X,g).
\]
If equality holds, then \(g_M\) has constant sectional curvature
\[
        \sec_{g_M}
        \equiv
        -\frac{4\lambda_0(X,g)}{(n-1)^2}.
\]
Thus an equality metric is flat when \(\lambda_0(X,g)=0\) and real
hyperbolic when \(\lambda_0(X,g)>0\).
\end{letteredcorollary}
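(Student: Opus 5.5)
We deduce the corollary from \Cref{thm:main-zero-in-the-spectrum}, first reducing to the spin case by passing to a finite cover. Since \(M\) is virtually spin, choose a finite connected cover \(\hat M\to M\) which is spin. Pull back the metric to \(\hat g=p^*g_M\). Then \(\hat M\) has the same universal Riemannian cover \((X,g)\), so \(\lambda_0(X,g)\) is unchanged. Moreover \(\inf_{\hat M}\scal_{\hat g}=\inf_M\scal_{g_M}\). Compact enlargeability passes to finite covers: a finite cover of a compactly enlargeable manifold is again compactly enlargeable, because the defining \(\varepsilon\)-contracting nonzero-degree maps from finite covers of \(M\) can be pulled back to the common refinement with finite covers of \(\hat M\). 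Hence we may assume \(M\) is spin. \Cref{prop:compact-enlargeability-zero-spectrum} then gives \(0\in\sigma_{L^2}(D)\) for the spin Dirac operator on \(X\), and \Cref{thm:main-zero-in-the-spectrum} yields the inequality \(\inf_M\scal_{g_M}\le -\frac{4n}{n-1}\lambda_0\).

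For equality, split into the two cases of \Cref{thm:main-zero-in-the-spectrum}.

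If \(\lambda_0>0\), part (ii) says \((X,g)\) is real hyperbolic with \(\sec\equiv -4\lambda_0/(n-1)^2\). Sectional curvature is local, so \(g_M\) has the same constant curvature.

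If \(\lambda_0=0\), part (i) gives that \(g_M\) (on the spin cover, hence also on \(M\), since the covering is a local isometry) is Ricci-flat, and in particular \(\scal_{g_M}\equiv0\). It remains to upgrade Ricci-flat to flat. This is the one step that needs outside input, namely the Gromov--Lawson rigidity theorem \cite[Theorem~A]{GromovLawson1980}: a compactly enlargeable spin manifold carrying a metric of nonnegative scalar curvature is flat. Apply it to the spin cover \(\hat M\) with the metric \(\hat g\), which has \(\scal\ge0\). So \(\hat g\) is flat, and since flatness is local, \(g_M\) is flat, i.e.\ \(\sec\equiv0=-4\lambda_0/(n-1)^2\).

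The only point where I expect to need care is the heredity of compact enlargeability to the spin cover \(\hat M\), which is used for both \Cref{prop:compact-enlargeability-zero-spectrum} and Gromov--Lawson. If that is awkward, an alternative is to observe that \Cref{prop:compact-enlargeability-zero-spectrum} is stated under the hypotheses "compact enlargeability and virtual spin". Read that way, it directly gives zero in the spectrum for the Dirac operator of \(X\), which is spin because it is simply connected and covers a spin manifold. Likewise, Gromov--Lawson's theorem is formulated for enlargeable manifolds whose covers are spin. Everything else is bookkeeping with local invariants.
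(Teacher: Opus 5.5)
Your proposal is correct and is essentially the paper's proof. Both use \Cref{prop:compact-enlargeability-zero-spectrum} to get zero in the spectrum, then \Cref{thm:main-zero-in-the-spectrum} for the inequality and the two equality cases, and finally \cite[Theorem~A]{GromovLawson1980} on a finite spin cover to upgrade Ricci-flat to flat when \(\lambda_0=0\). The only difference is cosmetic: you pass to the finite spin cover explicitly at the outset, using the same fiber-product heredity argument the paper gives inside the proof of \Cref{prop:compact-enlargeability-zero-spectrum}, whereas the paper invokes the virtually spin form from \Cref{rem:zero-in-the-spectrum-virtually-spin}.
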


\begin{remark}\label{rem:zero-in-the-spectrum-virtually-spin}
The spin assumptions in \Cref{thm:main-two-form,thm:main-zero-in-the-spectrum}
may be relaxed by assuming that \(M\) is virtually spin, with the spin
structure on \(X\) lifted from a finite spin cover. For
\Cref{thm:main-two-form}, the exact-lift condition and the homological
\(\Ahat\)-non-singularity condition pull back to finite covers.
\end{remark}

\begin{example}
The hypotheses of \Cref{thm:main-two-form} are satisfied by the following classes of manifolds:
\begin{itemize}
    \item closed spin aspherical manifolds admitting degree-two cohomology classes whose product is nonzero in top degree;
    \item closed spin symplectic manifolds \((M^{2m},\omega)\) whenever the symplectic form
    \(\omega\) has exact lift to the universal cover;
    \item four-dimensional closed spin manifolds for which \(\ker\bigl(\pi^*\colon H^2(M;\mathbb Q)\to H^2(X;\mathbb Q)\bigr)\) contains a class with nonzero square. Note that this happens whenever \(H^2(M;\mathbb Q)\neq \{0\}\) and \(H^2(X;\mathbb Q)=\{0\}\).
\end{itemize}
\end{example}

The inequality in \Cref{thm:main-two-form} is strict for every Riemannian
metric on several natural classes of manifolds. These include products of
higher-genus surfaces, certain symmetric products for which \(\lambda_0=0\),
and products such as \(K3\times B\), where the hypotheses are detected only
by the \(\Ahat\)-refinement. These three different obstructions to equality
are explained in
\Cref{ex:product-surfaces-strict,ex:symmetric-product-strict,ex:ahat-refined-strict}.

\begin{example}
Let \(M^n\), \(n\geq 3\), be a closed connected spin manifold.
Then \(M\) satisfies the hypotheses of \Cref{thm:main-zero-in-the-spectrum} in the following cases:
\begin{enumerate}[label=\textup{(\arabic*)}]
    \item \(M\) has nonvanishing reduced Rosenberg index; see
    \cite[Sections~1.5 and~2.1]{SchickCoarseIndex} and
    \cite[Proposition~2.5]{WangZhu2024}.
    \item The fundamental group of \(M\) satisfies the reduced Novikov conjecture and \(M\) carries a nonvanishing higher \(\widehat A\)-genus; see
    \cite[Theorem~2.11]{Rosenberg1983} and, for the reduced formulation used here,
    \cite[proof of Proposition~4.3]{WangZhu2024}.
    \item \(M\) has nonvanishing \(\widehat A\)-genus; see
    \cite[Proposition~3.2 and Remark~3.3]{Davaux2003}.
\end{enumerate}
\end{example}

\begin{example}
\Cref{cor:compact-enlargeable-rigidity} applies to every
Riemannian metric on a closed hyperbolic manifold, and equality characterizes
the appropriately normalized hyperbolic metrics. In dimension three, it
applies more generally to every closed oriented manifold which does not admit
a metric of positive scalar curvature. Precise statements and explanations
are given in \Cref{ex:closed-hyperbolic,ex:three-dimensional}.
\end{example}

\begin{remark}
An estimate of this type was first proved by Ono \cite{Ono1988}, who bounded
\(\inf_M\scal_{g_M}\) in terms of \(\lambda_0\) for closed spin manifolds
carrying certain index obstructions to positive scalar curvature. Davaux
\cite{Davaux2003} refined Ono's estimate to the optimal constant
\(4n/(n-1)\). Related sharp estimates were obtained by
Bei--Diverio--Trapani in the K\"ahler setting
\cite{FBeiDiverioTrapani}, and by Liu \cite{Liu26} via the vertical
\(\widehat A\)-cowaist. For complete three-manifolds, Munteanu--Wang
\cite[Theorems~1.1 and~1.3]{MunteanuWang2024} proved the sharp
bottom-spectrum estimate under scalar-curvature and Ricci-curvature lower
bounds and suitable topological hypotheses, and studied the corresponding
rigidity phenomena. In subsequent work, they removed the Ricci-curvature
lower bound from the spectral estimate for manifolds with finitely many ends
and finite first Betti number
\cite[Theorem~1.1]{MunteanuWang2026}.
\end{remark}

\begin{remark}\label{rem:hanke-schick}
By \eqref{eq:scalar-bottom-estimate}, the cohomological conditions of
\Cref{thm:main-two-form} obstruct the existence of positive scalar curvature
metrics on \(M\). Such cohomological obstructions should be viewed as
belonging to the broad circle of ideas surrounding the Novikov conjecture for
low-degree cohomology; see Connes--Gromov--Moscovici
\cite{ConnesGromovMoscovici}, Mathai \cite{MathaiLowDegree}, and
Hanke--Schick \cite{HankeSchick}. More precisely, by
\cite[Lemma 2.9]{BeiDiverioTrapani} and \cite{HankeSchick}, a homologically
\(\Ahat\)-non-singular two-form with exact lift implies nonvanishing of the
maximal Rosenberg index of \(M\). Hence \(M\) does not carry any metric of
positive scalar curvature. Related positive-scalar-curvature obstructions
arising from degree-two cohomology were also established by Guo--Mathai
\cite{GuoMathai} in the setting of proper actions. Finally, we mention the
recent work of Di Cerbo--Dranishnikov--Jauhari, which gives a complementary
positive-scalar-curvature obstruction for closed virtually spin
\(c\)-symplectically aspherical manifolds
\cite[Theorem~5.3]{DiCerboDranishnikovJauhari2026}.
\end{remark}

The analytic step that upgrades scalar rigidity to Einstein rigidity is isolated in the following theorem.

\begin{letteredtheorem}
\label{thm:intro-analytic-einstein}
Let \((M^n,g_M)\), \(n\geq 3\), be a closed connected spin Riemannian manifold, and let \(\pi:(X,g)\longrightarrow (M,g_M)\) be its universal Riemannian cover.
Let \(\omega\in\Omega^2(M)\) be a closed two-form whose lift is exact, that is,
\[
        \pi^*\omega=d\eta
\]
for some real one-form \(\eta\in\Omega^1(X)\). For \(s\in[0,1]\), let \(D_s\) be the Dirac operator on \(\Sigma_gX\otimes (X\times\C)\), where the trivial Hermitian line bundle \(X\times\C\) is equipped with the unitary connection \(d+is\eta\).

Assume that
\[
        \scal_g\equiv -\frac{4n}{n-1}\lambda_0(X,g).
\]
Assume moreover that there exist \(s_j\in[0,1]\), \(s_j\to0\), and nonzero smooth \(L^2\)-spinors
\[
        \psi_j\in C^\infty(X,\Sigma_gX\otimes (X\times\C))
        \cap L^2(X,\Sigma_gX\otimes (X\times\C))
\]
such that
\[
        \|\psi_j\|_{L^2}=1,
        \qquad
        \|D_{s_j}\psi_j\|_{L^2}\to0.
\]
Then
\[
        \Ric_g=-\frac{4\lambda_0(X,g)}{n-1}g.
\]
In particular, \(g\) is Einstein. Furthermore, if \(\lambda_0(X,g)>0\),
then \((X,g)\) is isometric to real hyperbolic space with constant sectional
curvature
\[
        \sec_g\equiv-\frac{4\lambda_0(X,g)}{(n-1)^2}.
\]
\end{letteredtheorem}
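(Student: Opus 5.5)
Write \(\lambda_0=\lambda_0(X,g)\), \(\kappa=\frac{n}{4(n-1)}\), and \(\scal_g\equiv-4n\lambda_0/(n-1)\). The plan has three steps: turn the almost-harmonic spinors into a nearly optimal test function for \(\lambda_0\); pass to a limit, after recentering with the deck group, to obtain a positive function \(f\) with \(\Delta f=\lambda_0 f\) and the refined Kato inequality saturated by a limiting spinor; then read that equality conformally to produce a parallel spinor for \(f^{2/(n-1)}g\), which forces the Einstein condition.

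\textbf{Step 1: a chain of inequalities that must be sharp.} Apply the Schrödinger--Lichnerowicz formula to the twisted Dirac operator \(D_s\). The twisting curvature is \(is\,\pi^*\omega\), and since \(\omega\) lives on the compact \(M\) it is bounded: \(|c(is\pi^*\omega)|\le Cs\). Hence
\[
\|D_{s_j}\psi_j\|^2=\|\nabla\psi_j\|^2+\tfrac14\int\scal_g|\psi_j|^2+O(s_j).
\]
Combine this with the refined Kato inequality for almost-harmonic spinors,
\[
|\nabla\psi|^2\ge \tfrac{n}{n-1}\,\bigl|d|\psi|\bigr|^2-C\,|D\psi|\,|\nabla\psi|,
\]
and with the Rayleigh bound \(\int|d|\psi||^2\ge\lambda_0\int|\psi|^2\). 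The three terms then balance exactly, so every inequality in the chain is an equality up to \(o(1)\). Concretely, \(u_j=|\psi_j|\) satisfies \(\|du_j\|^2-\lambda_0\to0\), and the Kato defect \(\int\bigl(|\nabla\psi_j|^2-\tfrac{n}{n-1}|du_j|^2\bigr)\to0\).

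\textbf{Step 2: recentering and passage to the limit.} The mass of \(\psi_j\) may escape to infinity, so I use the cocompact deck action. Cover \(X\) by translates of a fundamental domain. A localization (IMS-type) argument shows that some unit ball \(B(x_j,1)\) carries mass bounded below, with local Rayleigh quotients still almost optimal. Pull back by a deck transformation so that \(x_j\) lies in a fixed compact set.

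The twisting needs care because \(\eta\) is not bounded. After pulling back, the connection becomes \(d+is_j\gamma^*\eta\). On a fixed large ball \(B_R\) I regauge so that the primitive has size \(O(s_jR)\): use a local primitive of the bounded form \(\pi^*\omega\) that vanishes at the center, which changes \(\psi_j\) by a phase and leaves \(|\psi_j|\) unchanged. Thus on each \(B_R\) the operators \(D_{s_j}\) converge to \(D_0\) locally uniformly.

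Interior elliptic estimates give a subsequence with \(\psi_j\to\psi\) in \(H^1_{\loc}\), with \(\psi\neq0\) by the mass lower bound. The limit satisfies \(D_0\psi=0\) locally and saturates the refined Kato equality pointwise. The function \(|\psi|\) satisfies \(\Delta|\psi|=\lambda_0|\psi|\) weakly, so it is a positive \(\lambda_0\)-eigenfunction; positivity comes from Harnack and equality in Kato. Note that \(\psi\) need not be \(L^2\); only local limits are used, so this is fine. Getting the local equalities to survive the limit without global \(L^2\) control is the main obstacle. I would handle it with a cutoff version of the integrated identity on \(B_R\), using \(\int\phi^2\) weights and the equivariance, so that the defect integrand, which is nonnegative, tends to zero in \(L^1_{\loc}\).

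\textbf{Step 3: conformal reading of Kato equality.} Equality in the refined Kato inequality for a harmonic spinor says
\[
\nabla_X\psi=-\tfrac1{n-1}\bigl(X\cdot \nabla\log|\psi|\cdot\psi+\dots\bigr),
\]
that is, \(\nabla\psi\) is a twistor-type term built from \(d\log|\psi|\). This is precisely the condition for \(\tilde\psi=|\psi|^{-a}\psi\) (suitably identified) to be parallel for \(\tilde g=|\psi|^{4/(n-1)}g\), with \(a\) fixed by the conformal weights. A parallel spinor makes \(\tilde g\) Ricci-flat. Write \(\Ric_{\tilde g}=0\) via the conformal change formula in terms of \(f=|\psi|\), \(\nabla^2\log f\) and \(|d\log f|^2\). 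Its trace, combined with \(\Delta f=\lambda_0f\) and \(\scal_g=-4n\lambda_0/(n-1)\), is consistent. Its trace-free part gives \(\nabla^2 f^{\alpha}=c\,f^{\alpha}g\) for a suitable \(\alpha\), and substituting back yields \(\Ric_g=-\frac{4\lambda_0}{n-1}g\).

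If \(\lambda_0>0\), the function \(h=f^{\alpha}\) is a positive solution of \(\nabla^2h=\mu h\,g\) with \(\mu=4\lambda_0/(n-1)^2>0\) on the complete simply connected manifold \(X\). By the Obata--Tashiro theorem, \(X\) is a warped product \(\R\times_{e^{\sqrt\mu t}}N\) or a \(\cosh\) model, and the Einstein condition with constant \(-(n-1)\mu\) together with \(N\) flat forces constant curvature \(-\mu\), i.e.\ hyperbolic space. Alternatively, I would apply Kühnel's classification of concircular fields on Einstein manifolds directly.
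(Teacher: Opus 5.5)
\textbf{The main gap is in Step 2, where the real work lies.} Your regauging of the twist is fine; it amounts to the paper's lifts $\Phi_{\gamma,s}\in\Gamma_s$. The problem is mass. With $\|\psi_j\|_{L^2}=1$ the mass can vanish locally everywhere, and it does so in the basic equality case: a closed hyperbolic manifold with its hyperbolic metric. There, a nonzero $L^2_{\loc}$-limit of deck-translated $\psi_j$ would be a nonzero $L^2$ harmonic spinor on $\mathbb H^n$, by Fatou and $\|D_{0}\psi_j\|_{L^2}\to0$. There are none. So no unit ball keeps mass bounded below.

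Your remark that the limit need not be $L^2$ already presupposes renormalizing by a local mass $m_j(\gamma_j)^{-1/2}$ that may blow up. After that, $\|D_{s_j}\Phi_j\|_{L^2}=m_j(\gamma_j)^{-1/2}\tau_j$ need not tend to zero. Global $O(\delta_j)$ deficits survive such a renormalization only if two things hold:
\begin{enumerate}
\item[(i)] every deficit is the integral of a nonnegative, deck-equivariant pointwise density, so that restricting it to a region costs nothing;
\item[(ii)] the center is chosen so that mass and deficit on each fixed word-ball around it are at most a constant times the central mass. This is the counting argument of Lemma~\ref{lem:discrete-recentering}.
\end{enumerate}
Your Rayleigh deficit $\int(|du_j|^2-\lambda_0u_j^2)\dmu$ violates (i), because its integrand changes sign. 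IMS or cutoff localization creates errors $\int|d\phi|^2u_j^2\dmu$ that are not small relative to the local mass. In particular, ``local Rayleigh quotients still almost optimal'' on unit balls is impossible, since the Dirichlet eigenvalue of a unit ball is far above $\lambda_0$.

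The missing idea is the ground-state transform. Fix $f>0$ with $\Delta_gf=\lambda_0f$. Then $\int(|du|^2-\lambda_0u^2)\dmu=\int f^2|d(u/f)|^2\dmu$. Together with the refined Kato identity, this gives a single nonnegative density $|P^{s_j}_f(\psi_j)|^2$ with total integral $O(\delta_j)$. This density is linear in $\psi_j$, unlike $P_{|\psi_j|}$, whose singular $d\log|\psi_j|$ does not pass to limits. Its Clifford contraction is $D_{s_j}\psi_j$, so harmonicity of the limit comes for free. The recentered ground states converge by Harnack to a ground state $f_\infty$. The limit has $f_\infty^{-1}\beta_\infty\Phi_\infty$ parallel, so it is nonzero and its length is a constant multiple of $f_\infty$. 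That is where positivity and the ground-state equation for the length come from.

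Your claim $\Delta|\psi|=\lambda_0|\psi|$ is not derived, and it does not follow from pointwise Kato equality plus $D\psi=0$. Those give only $u\Delta u+\frac1{n-1}|du|^2=\frac n{n-1}\lambda_0u^2$ for $u=|\psi|$. This is the ground-state equation only if, in addition, $|du|^2=\lambda_0u^2$.

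\textbf{Step 3 has a further non sequitur.} Set $\rho=f^{2/(n-1)}$ and let $\widehat g=\rho^2g$ be Ricci-flat. The trace-free part of the conformal Ricci formula is $\Ric_g-\frac{\scal_g}ng=(n-2)\rho^{-1}\bigl(\nabla^2_{\widehat g}\rho+\frac1n(\Delta_{\widehat g}\rho)\widehat g\bigr)$. This only expresses the unknown trace-free Ricci tensor of $g$ through a trace-free Hessian; it does not yield $\nabla^2f^\alpha=cf^\alpha g$.

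The information you dismiss as ``consistent'' sits in the trace. The conformal scalar-curvature equation and the ground-state equation are two independent linear relations between $\rho\Delta_{\widehat g}\rho$ and $|d\rho|^2_{\widehat g}$. Together they force $|d\rho|^2_{\widehat g}=4\lambda_0/(n-1)^2$ and $\Delta_{\widehat g}\rho=0$. Only then does Bochner's formula on the Ricci-flat $\widehat g$ give $\nabla^2_{\widehat g}\rho=0$. This yields the Einstein equation. It also shows that $h=\rho^{-1}$ satisfies $\nabla^2_gh=\mu hg$ and $|dh|_g^2=\mu h^2$, with $\mu=4\lambda_0/(n-1)^2$.

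Tashiro's theorem then gives $X\cong\R\times_{e^{\sqrt\mu t}}N$, but the Einstein condition only makes $N$ Ricci-flat. Flatness of $N$ needs cocompactness. For instance, planes tangent to $N$ have curvature $e^{-2\sqrt\mu t}\sec_N-\mu$, which is unbounded as $t\to-\infty$ unless $\sec_N\equiv0$. The paper instead invokes Ledrappier--Wang at this point.
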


The sharp inequality itself comes from a Rayleigh--Kato--Lichnerowicz mechanism (see \Cref{subsec:sharp-scalar-upper-bound}) and builds on work of Davaux \cite{Davaux2003}. The Lichnerowicz formula \eqref{eq:twisted-lichnerowicz} relates the square of the Dirac operator to the spinorial connection and scalar curvature;
the refined Kato inequality \eqref{eq:refined-kato-inequality-harmonic} converts the spinorial energy into the energy of the
length function; and Rayleigh's characterization of the bottom of the spectrum \eqref{eq:rayleigh-characterization} then gives the
sharp scalar bound. To prove scalar rigidity, we use a Wang--Zhu-type positivity principle
\cite{WangZhu2024}, which we prove directly in the specific case needed here
(\Cref{prop:wang-zhu-positivity}). We apply it to the lengths of the almost
harmonic twisted spinors supplied by the hypotheses of
\Cref{thm:main-two-form,thm:main-zero-in-the-spectrum}.

Thus, once equality holds and scalar rigidity has forced
\[
        \scal_g
        \equiv
        -\frac{4n}{n-1}\lambda_0(X,g),
\]
the remaining question is how to interpret equality in this chain.

The proof of \Cref{thm:intro-analytic-einstein} gives a conformal interpretation of this equality case. The central object is the Kato defect, which measures the failure of a spinor to realize equality in the refined Kato inequality (\Cref{prop:refined-kato-identity}).
This defect is not merely an analytic error term.
Through the standard conformal transformation law for spinor connections, it is exactly the obstruction to making the rescaled spinor parallel for the conformally related metric (\Cref{lem:conformal-spinor-identity}).

This conformal spinorial viewpoint is related in spirit to the classical equality theory for Dirac eigenvalue estimates and twistor spinors, especially Hijazi's conformal refinement of Friedrich's estimate, Friedrich--Kim's analysis of its limiting cases, and Friedrich's work on the conformal relation between twistor and Killing spinors \cite{Hijazi1986,FriedrichKim2001,Friedrich1989}. The present setting differs in that it involves \(L^2\)-almost harmonic twisted spinors on a possibly noncompact universal cover and requires a delicate recentering argument to prevent loss of mass.

In the ideal situation where one has a single nonzero harmonic spinor \(\psi\) for which all equality defects vanish, this observation immediately produces the desired geometry.
Indeed, with \(r=|\psi|\) and
\[
        \widehat g = r^{4/(n-1)}g,
\]
the conformal spinor formula produces a nonzero parallel spinor for \(\widehat g\), which is therefore Ricci-flat.
Since the length \(r\) is then a generalized ground state for \(\Delta_g\), that is, \(\Delta_g r=\lambda_0 r\), a conformal Einstein criterion (\Cref{prop:conformal-einstein}) converts the Ricci-flatness of \(\widehat g\) into the Einstein equation for the original metric \(g\).
In the present argument one does not have such a single exact equality spinor.
Instead, one has a sequence of almost harmonic twisted spinors, with
\(s_j\to0\), for which both the Dirac defect and the equality defects vanish
asymptotically. In the two-form application, the \(\Gamma_{s_j}\)-\(L^2\)-index
produces such a sequence with \(D_{s_j}\psi_j=0\), whereas in the untwisted
case it is supplied by the assumption that zero lies in the spectrum of the
lifted Dirac operator.
The ground-state transform replaces the varying lengths \(|\psi_j|\) by a fixed generalized ground state, and a recentering argument using deck transformations prevents the \(L^2\)-mass from escaping to infinity.
After passing to a limit, the twisting disappears, and one obtains a positive generalized ground state \(f_\infty\) such that \(f_\infty^{4/(n-1)}g\) carries a nonzero parallel spinor.
The conformal Einstein criterion then shows that \(g\) is Einstein.

We finally comment on the sectional rigidity statement when \(\lambda_0>0\).
In the classical Ricci lower-bound setting, Cheng's comparison theorem \cite[Theorem~4.2]{Cheng1975} gives
\[
        \lambda_0(X,g)\leq \frac{(n-1)^2}{4}
\]
under the hypothesis \(\Ric_g\geq -(n-1)g\).\footnote{Cheng's
comparison theorem applies to complete manifolds in general, not only to
universal covers.}
Wang \cite[Theorem~1.4]{Wang2008HyperbolicSpace} proved that equality for the universal cover of a closed manifold forces real hyperbolicity.
Ledrappier--Wang \cite{LedrappierWang} obtained a related rigidity theorem for regular covers.
In both approaches, an entropy argument is used to produce a positive harmonic function \(\varphi\) satisfying the sharp identity
\[
        |\nabla\log\varphi|=n-1.
\]
Once such a function is available, the remaining step is the Li--Wang equality mechanism for the sharp Cheng--Yau gradient estimate (see \cite{LiWangPositiveSpectrumII}), in the warped-product form recorded by Wang \cite[Lemma~2]{Wang2008HyperbolicSpace} and in the special harmonic-function rigidity theorem of Ledrappier--Wang \cite[Theorem~6]{LedrappierWang}.

In the present paper, the special harmonic function is produced instead by the conformal spinorial compactness argument.
Thus the hyperbolic rigidity step itself belongs to the circle of ideas developed by Wang and Ledrappier--Wang, while the new input is the spinorial construction of the special harmonic function from the hypotheses of \Cref{thm:intro-analytic-einstein} (see the proof of \Cref{thm:conformal-geometric-rigidity}).

The paper is organized as follows. In \Cref{sec:preliminaries} we collect
the spectral and spinorial preliminaries, including the bottom of the
\(L^2\)-spectrum, twisted Dirac operators, and the equivariant
\(L^2\)-index theorem. In
\Cref{sec:almost-harmonic-twisted-spinors} we introduce almost harmonic
twisted spinors and establish the two mechanisms that produce them: the
zero-in-the-spectrum mechanism and Gromov's central-extension
\(L^2\)-index mechanism; we also collect the examples and applications
described above. In \Cref{sec:scalar-rigidity} we prove the sharp
scalar-curvature comparison and scalar rigidity. Finally, in
\Cref{sec:einstein-rigidity} we prove
\Cref{thm:intro-analytic-einstein}, upgrade Einstein rigidity to constant
sectional curvature in the positive-spectrum case, and derive
\Cref{thm:main-two-form,thm:main-zero-in-the-spectrum} together with
\Cref{cor:compact-enlargeable-rigidity}.

\subsection*{Acknowledgments}

We thank Simon Brendle for helpful comments on the refined Kato inequality,
Guoliang Yu for pointing out the connection with the work of Hanke and Schick
(see \Cref{rem:hanke-schick}), and Christian B\"ar for interesting comments
during a presentation of an earlier version of this work.

\section{Spectral and spinorial preliminaries}
\label{sec:preliminaries}

\subsection{The bottom of the \texorpdfstring{\(L^2\)}{L2}-spectrum of the Laplacian}
\label{subsec:bottom-spectrum}

Unless explicitly stated otherwise, in the two-form index arguments we write
\(n=2m\geq 4\). The untwisted and ungraded analytic statements below are
valid in the dimensions specified locally.
Let \((M^n,g_M)\) be a closed connected Riemannian manifold, and let
\[
        \pi:X=\widetilde M\longrightarrow M
\]
be the universal cover. We denote by \(g=\pi^*g_M\) the lifted metric on \(X\). Since \(M\) is closed, \((X,g)\) is complete.

We regard the nonnegative scalar Laplacian
\[
        \Delta_g=d^*d
\]
as an unbounded operator on \(L^2(X,d\mu_g)\), initially defined on \(C_c^\infty(X)\). Since \((X,g)\) is complete, \(\Delta_g\) is essentially self-adjoint on \(C_c^\infty(X)\) by Chernoff's theorem \cite{ChernoffEssentialSelfAdjointness}. We denote its self-adjoint \(L^2\)-closure again by \(\Delta_g\), and write \(\sigma_{L^2}(\Delta_g)\) for its spectrum.

We write
\[
        \lambda_0=\lambda_0(X,g)
\]
for the bottom of the \(L^2\)-spectrum of \(\Delta_g\), namely
\[
        \lambda_0:=\inf\sigma_{L^2}(\Delta_g).
\]
Equivalently, by the Rayleigh characterization,
\begin{equation}
\label{eq:rayleigh-characterization}
        \lambda_0
        =
        \inf_{0\ne u\in C_c^\infty(X)}
        \frac{\int_X |du|^2\dmu}{\int_X |u|^2\dmu}.
\end{equation}
For this standard material on the \(L^2\)-spectrum of the Laplacian on complete Riemannian manifolds, see for instance \cite{ChavelEigenvalues,StrichartzLaplacian}.

The closed quadratic form associated with \(\Delta_g\) is
\[
        Q[u]=\int_X |du|^2\dmu,
\]
with form domain \(W^{1,2}(X)\). Since \((X,g)\) is complete, \(C_c^\infty(X)\) is a form core for \(Q\). Hence \eqref{eq:rayleigh-characterization} can equivalently be written as
\[
        \lambda_0
        =
        \inf_{0\ne u\in W^{1,2}(X)}
        \frac{\int_X |du|^2\dmu}{\int_X |u|^2\dmu}.
\]
Consequently,
\begin{equation}
\label{eq:rayleigh-inequality}
        \int_X |du|^2\dmu
        \ge
        \lambda_0\int_X |u|^2\dmu,
        \qquad
        u\in W^{1,2}(X).
\end{equation}

\subsection{Twisted Dirac operators}
\label{subsec:twisted-dirac}

Assume now that \(M\) is spin. We fix a spin structure on \(M\), and we equip \(X\) with the pulled-back spin structure. 
Thus \(\Sigma_gX=\pi^*\Sigma_{g_M}M\).
Let \(\nabla^{\Sigma_gX}\) be the pulled-back spinor connection. 
The Clifford multiplication for \(g\) is denoted by \(c_g\), or simply by \(c\) when no confusion is possible. 
Our Clifford convention is
\[
        c(v)^*=-c(v),
        \qquad
        c(v)^2=-|v|_g^2.
\]

Let \(E\to X\) be a Hermitian vector bundle equipped with a unitary connection \(\nabla^E\). We write
\[
        \nabla^{\Sigma\otimes E}
        :=
        \nabla^{\Sigma_gX}\otimes 1+1\otimes\nabla^E
\]
for the induced connection on \(\Sigma_gX\otimes E\). The corresponding twisted Dirac operator is
\[
        D_E:C_c^\infty(X,\Sigma_gX\otimes E)
        \longrightarrow
        C_c^\infty(X,\Sigma_gX\otimes E),
        \qquad
        D_E\psi=\sum_{a=1}^n c(e_a)\nabla^{\Sigma\otimes E}_{e_a}\psi,
\]
where \((e_a)\) is any local \(g\)-orthonormal frame. On \(C_c^\infty\), the operator \(D_E\) is symmetric. Since \((X,g)\) is complete, \(D_E\) is essentially self-adjoint by Chernoff's theorem \cite{ChernoffEssentialSelfAdjointness}. We denote its self-adjoint \(L^2\)-closure again by \(D_E\).

The Lichnerowicz formula for \(D_E\) is
\begin{equation}
\label{eq:general-twisted-lichnerowicz}
        D_E^2
        =
        (\nabla^{\Sigma\otimes E})^*\nabla^{\Sigma\otimes E}
        +\frac14\scal_g+\mathcal R^E,
\end{equation}
where the twisting curvature term is
\[
        \mathcal R^E
        =
        \frac12\sum_{a,b=1}^n c(e_a)c(e_b)R^E(e_a,e_b)
        \in
        \operatorname{End}(\Sigma_gX\otimes E).
\]
See, for instance, \cite[Theorem~II.8.17]{LawsonMichelsohn}.

We now specialize this notation to the line-bundle twists used below. Let \(\omega\in\Omega^2(M)\) be a closed two-form whose lift to \(X\) is exact:
\[
        \pi^*\omega=d\eta
\]
for some real one-form \(\eta\in\Omega^1(X)\). Let
\[
        F:=X\times\C
\]
be the trivial Hermitian line bundle. For \(s\in\R\), we equip \(F\) with the unitary connection
\[
        \nabla^{F,s}:=d+is\eta.
\]
We write
\[
        \nabla^s
        :=
        \nabla^{\Sigma_gX}\otimes1+1\otimes\nabla^{F,s}
\]
for the induced connection on \(\Sigma_gX\otimes F\). The corresponding twisted Dirac operator is
\[
        D_s:C_c^\infty(X,\Sigma_gX\otimes F)
        \longrightarrow
        C_c^\infty(X,\Sigma_gX\otimes F),
        \qquad
        D_s\psi=\sum_{a=1}^n c(e_a)\nabla^s_{e_a}\psi,
\]
where \((e_a)\) is any local \(g\)-orthonormal frame. On \(C_c^\infty\), the operator \(D_s\) is symmetric. Since \((X,g)\) is complete, \(D_s\) is essentially self-adjoint by Chernoff's theorem \cite{ChernoffEssentialSelfAdjointness}. We denote its self-adjoint \(L^2\)-closure again by \(D_s\).

The curvature of \(\nabla^{F,s}\) is
\begin{equation}
\label{eq:line-curvature}
        F(\nabla^{F,s})=is\,d\eta=is\,\pi^*\omega.
\end{equation}
Hence the twisted Lichnerowicz formula has the form
\begin{equation}
\label{eq:twisted-lichnerowicz}
        D_s^2=(\nabla^s)^*\nabla^s+\frac14\scal_g+sR_\omega,
\end{equation}
where \(R_\omega\in\operatorname{End}(\Sigma_gX\otimes F)\) is defined by
\[
        R_\omega
        =
        \frac i2\sum_{a,b=1}^n
        c(e_a)c(e_b)\,\pi^*\omega(e_a,e_b).
\]

Since \(\omega\) is smooth on the compact manifold \(M\), its lift \(\pi^*\omega\) is bounded on \(X\). 
Therefore there is a constant \(C_\omega>0\), depending only on \(n\), \(g_M\), and \(\omega\), such that
\begin{equation}
\label{eq:Romega-bound}
        |\langle R_\omega\varphi,\varphi\rangle|
        \le
        C_\omega|\varphi|^2
\end{equation}
for every twisted spinor \(\varphi\in C^\infty(X,\Sigma_gX\otimes F)\).

We shall use the following integrated form of the Lichnerowicz formula. Suppose that
\[
        \psi\in C^\infty(X,\Sigma_gX\otimes F)
        \cap L^2(X,\Sigma_gX\otimes F)
\]
and that \(D_s\psi\in L^2(X,\Sigma_gX\otimes F)\). Then
\[
        \nabla^s\psi\in L^2(X,T^*X\otimes\Sigma_gX\otimes F),
\]
and
\begin{equation}
\label{eq:integrated-lichnerowicz}
        \|D_s\psi\|_{L^2}^2
        =
        \|\nabla^s\psi\|_{L^2}^2
        +\frac14\int_X \scal_g|\psi|^2\dmu
        +s\int_X\langle R_\omega\psi,\psi\rangle\dmu.
\end{equation}

\subsection{The \texorpdfstring{\(L^2\)-\(G\)}{L2-G}-index theorem}
\label{subsec:equivariant-L2-index}

We recall the \(L^2\)-\(G\)-index and the equivariant index theorem used
below. This is the extension of Atiyah's \(L^2\)-index theorem
\cite{AtiyahL2Index} considered by Ballmann in
\cite[Section~8.2]{Bal06}; it applies even when the Dirac bundle on the
covering does not descend to the quotient.

Let
\[
        p:\overline M\longrightarrow M
\]
be a normal Riemannian covering, where \(M\) and \(\overline M\) are
connected and \(M\) is closed. Let \(\Gamma\) be its deck group, and let
\begin{equation}
\label{eq:compact-group-extension}
        1\longrightarrow K\longrightarrow G
        \xrightarrow{\rho}\Gamma\longrightarrow1
\end{equation}
be an extension of \(\Gamma\) by a compact group \(K\). Let
\(\overline V=\overline V^+\oplus\overline V^-\to\overline M\) be a
graded Hermitian Dirac bundle, not necessarily the pullback of a bundle
on \(M\). Suppose that \(G\) acts on \(\overline V\) by Hermitian bundle
isomorphisms, with \(g\in G\) covering the deck transformation
\(\rho(g)\). Equivalently, if \(q:\overline V\to\overline M\) denotes
the bundle projection, then
\[
        q\circ g=\rho(g)\circ q.
\]
Assume that the action preserves the grading, the Dirac connection, and
Clifford multiplication. The associated Dirac operator
\[
\overline D^+:C^\infty(\overline M,\overline V^+)
        \longrightarrow C^\infty(\overline M,\overline V^-)
\]
is then \(G\)-equivariant. Let
\[
        \overline D^-=(\overline D^+)^*.
\]
Explicitly, the induced action on sections is
\begin{equation}
\label{eq:G-action-on-sections}
        (g\cdot\sigma)(x)
        :=
        g_{\rho(g)^{-1}x}
        \bigl(\sigma(\rho(g)^{-1}x)\bigr).
\end{equation}

Set
\[
        H^\pm:=L^2(\overline M,\overline V^\pm),
\]
and let
\[
        P^\pm:H^\pm\longrightarrow\ker\overline D^\pm
\]
be the orthogonal projections. Since \(\overline D^\pm\) are
\(G\)-equivariant, the projections \(P^\pm\) commute with the \(G\)-action.
By elliptic regularity they are smoothing operators. Let
\[
        \mathcal K^\pm(x,y)\in
        \operatorname{Hom}
        \bigl(\overline V^\pm_y,\overline V^\pm_x\bigr)
\]
denote their smooth Schwartz kernels.

If \(g\in G\) maps to \(\gamma=\rho(g)\in\Gamma\), then
\begin{equation}
\label{eq:G-kernel-equivariance}
        \mathcal K^\pm(\gamma x,\gamma y)
        =
        g_x\mathcal K^\pm(x,y)g_y^{-1}.
\end{equation}
In particular,
\[
        \operatorname{tr}\mathcal K^\pm(\gamma x,\gamma x)
        =
        \operatorname{tr}\mathcal K^\pm(x,x),
\]
so \(x\mapsto\operatorname{tr}\mathcal K^\pm(x,x)\) is a
\(\Gamma\)-invariant function on \(\overline M\).

Let \(\mathcal F\subset\overline M\) be a measurable fundamental domain.
The \(G\)-dimensions of the two kernels are defined by
\begin{equation}
\label{eq:extended-dimension}
        \dim_G\ker\overline D^\pm
        :=\int_{\mathcal F}\operatorname{tr}\mathcal K^\pm(x,x)\,d\mu(x)
\end{equation}
and the \(L^2\)-\(G\)-index of \(\overline D^+\) is
\begin{equation}
\label{eq:extended-L2-index-definition}
        \ind_G^{(2)}(\overline D^+)
        :=
        \dim_G\ker\overline D^+
        -
        \dim_G\ker\overline D^-.
\end{equation}
The integrals in \eqref{eq:extended-dimension} are independent of the
choice of fundamental domain. Their finiteness, and hence the
well-definedness of \eqref{eq:extended-L2-index-definition}, follow from
the next theorem.

\begin{theorem}
\label{thm:compact-extension-L2-index}
In the preceding setting, the associated Dirac operator is
\(G\)-equivariant and the \(G\)-dimensions
\[
        \dim_G\ker\overline D^+,
        \qquad
        \dim_G\ker\overline D^-
\]
are finite. Moreover, the canonical local index form of
\(\overline D^+\) is the pullback of a differential form
\(\alpha_{\overline D}\) on \(M\), and
\begin{equation}
\label{eq:compact-extension-L2-index}
        \ind_G^{(2)}(\overline D^+)
        =\int_M\alpha_{\overline D}.
\end{equation}
\end{theorem}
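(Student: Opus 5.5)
We follow Atiyah's heat-kernel proof of the \(L^2\)-index theorem, replacing the deck group \(\Gamma\) by the compact extension \(G\) of \eqref{eq:compact-group-extension}. \(G\)-equivariance of \(\overline D^+\) is immediate: the action \eqref{eq:G-action-on-sections} preserves the grading, the Dirac connection and Clifford multiplication, and each \(g\in G\) covers the isometry \(\rho(g)\). All local computations below take place in \(\overline M\); only traces are pushed down to \(M\).

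\textbf{Step 1: trace-class framework.} Let \(\mathcal N\) be the algebra of bounded operators on \(H^\pm\) commuting with \(G\). For \(T\in\mathcal N\) with smooth kernel \(k_T\), relation \eqref{eq:G-kernel-equivariance} shows that \(x\mapsto \operatorname{tr}k_T(x,x)\) is \(\Gamma\)-invariant. We set
\[
\tau_G(T)=\int_{\mathcal F}\operatorname{tr}k_T(x,x)\,d\mu .
\]
The key facts are that \(\tau_G\) is a faithful normal trace on the positive part of \(\mathcal N\) and satisfies \(\tau_G(ST)=\tau_G(TS)\). To prove the trace property, we write \(\overline M\) as a disjoint union of translates \(\gamma\mathcal F\). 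For each \(\gamma\) we choose a lift \(g_\gamma\in G\) and conjugate by it; the fibre unitaries \(g_x\) cancel inside the fibrewise trace. This is exactly the argument of Atiyah's \(\Gamma\)-trace, and the choice of lift is immaterial because conjugation preserves \(\operatorname{tr}\).

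\textbf{Step 2: heat kernels and finiteness.} Bounded geometry holds, since everything is \(\Gamma\)-periodic up to fibre unitaries. Hence the heat kernels \(e^{-t\overline D^\mp\overline D^\pm}\) are smooth and satisfy uniform Gaussian bounds, so their \(\tau_G\) is finite. Since \(P^\pm\le e^{-t\overline D^\mp\overline D^\pm}\) and \(\tau_G\) is positive, we obtain \(\dim_G\ker\overline D^\pm\le \tau_G(e^{-t\overline D^\mp\overline D^\pm})<\infty\).

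\textbf{Step 3: McKean–Singer.} We show that
\[
\tau_G\bigl(e^{-t\overline D^-\overline D^+}\bigr)-\tau_G\bigl(e^{-t\overline D^+\overline D^-}\bigr)
\]
is independent of \(t\). Differentiating in \(t\) and using the trace property on \(\overline D^+\) and \(\overline D^-\) composed with smoothing heat operators gives derivative zero. Letting \(t\to\infty\), normality of \(\tau_G\) and spectral calculus give convergence to \(\ind_G^{(2)}\).

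\textbf{Step 4: small-time limit.} As \(t\to0\), the local heat-kernel asymptotics (Getzler rescaling) express the supertrace density as the canonical local index form of \(\overline D^+\). This form is built from the curvatures of the Levi-Civita connection and of the twisting part of the connection on \(\overline V\). It is \(G\)-invariant, since \(G\) preserves all the structure, and the fibre action disappears under the trace. It is therefore \(\Gamma\)-invariant and descends to a form \(\alpha_{\overline D}\) on \(M\). Integrating over \(\mathcal F\) yields \(\int_M\alpha_{\overline D}\), which proves \eqref{eq:compact-extension-L2-index}.

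The main obstacle is Step 1. The bundle does not descend, so the standard identification \(\mathcal N\cong\) the group von Neumann algebra of \(\Gamma\) tensored with \(B(L^2(M,V))\) is unavailable. The trace property must instead be checked directly from \eqref{eq:G-kernel-equivariance}, with the compactness of \(K\) ensuring the construction is well defined. Here we would follow Ballmann \cite[Section~8.2]{Bal06}.
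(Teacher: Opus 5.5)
Your outline is correct, and it is essentially the argument behind the statement. The paper gives no proof of its own: it attributes the theorem to Ballmann's extension of Atiyah's \(L^2\)-index theorem \cite[Section~8.2]{Bal06}, and, for the \(U(1)\)-central case actually used, to Hang Wang's index theorem for proper cocompact actions \cite{WangProperCocompactIndex}. Your steps (fundamental-domain trace, the bound \(P^\pm\le e^{-t\overline D^\mp\overline D^\pm}\), McKean--Singer, and the local index theorem as \(t\to0\)) are the standard way to carry out that extension.

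Two small corrections of emphasis. First, the trace property in Step~1 uses \eqref{eq:G-kernel-equivariance} only for one chosen lift of each \(\gamma\), so compactness of \(K\) plays no role there; it matters only for regarding \(G\) as acting properly and cocompactly, as in Wang's framework. Second, the genuine departure from Atiyah's original proof is Step~4 rather than Step~1: since \(\overline V\) does not descend, the local index theorem must replace comparison with a quotient operator on \(M\).
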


\begin{remark}
The case of \Cref{thm:compact-extension-L2-index} used in
\Cref{subsec:Gromov-central-index}, in which \(G\) is a \(U(1)\)-central
extension of \(\Gamma\), follows from Hang Wang's \(L^2\)-index theorem for
proper cocompact group actions
\cite[Proposition~3.6 and Theorem~6.10]{WangProperCocompactIndex}.
\end{remark}

\begin{corollary}
\label{cor:nonzero-index-L2-kernel}
If \(\int_M\alpha_{\overline D}\ne0\), then the full operator
\(\overline D=\overline D^++\overline D^-\) has a nonzero smooth
\(L^2\)-kernel.  In particular,
\(0\in\sigma_{L^2}(\overline D)\). In the pullback case
\(\overline V=p^*V\), where \(D^+\) denotes the quotient operator on \(M\),
the same conclusion holds whenever \(\ind(D^+)\ne0\).
\end{corollary}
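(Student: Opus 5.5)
The plan is to combine the index formula \eqref{eq:compact-extension-L2-index} with the definition \eqref{eq:extended-L2-index-definition}. If \(\int_M\alpha_{\overline D}\neq0\), then \(\ind_G^{(2)}(\overline D^+)\neq 0\) by \Cref{thm:compact-extension-L2-index}. So at least one of \(\dim_G\ker\overline D^+\) and \(\dim_G\ker\overline D^-\) is nonzero, say \(\dim_G\ker\overline D^\pm\neq 0\).

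The key step is to check that a nonzero \(G\)-dimension forces a nonzero kernel. If \(\ker\overline D^\pm=0\), then \(P^\pm=0\). Its Schwartz kernel \(\mathcal K^\pm\) then vanishes identically, and so does the integral \eqref{eq:extended-dimension}. Contrapositively, \(\ker\overline D^\pm\) contains a nonzero \(L^2\)-section \(\sigma\). (Equivalently, one can note \(\operatorname{tr}\mathcal K^\pm(x,x)=\sum_k|\sigma_k(x)|^2\ge0\) for an orthonormal basis \(\sigma_k\) of the kernel. This series converges locally uniformly by elliptic estimates, so positivity of the trace already gives nonvanishing.)

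Next I would show that \(\sigma\) is smooth and lies in the kernel of the full operator. Smoothness follows from elliptic regularity, since \(\overline D^\pm\sigma=0\) weakly. On \(\overline V^+\oplus\overline V^-\) the full operator \(\overline D\) acts as \(\overline D^\pm\) on the graded summand, so \(\overline D\sigma=0\) with \(\sigma\) taken in the appropriate summand. Completeness of \(\overline M\), which holds because \(M\) is closed, makes \(\overline D\) essentially self-adjoint. Its closure is then \(\overline D^+\oplus\overline D^-\) with \(\overline D^-=(\overline D^+)^*\), so \(\sigma\) lies in the \(L^2\)-kernel of the self-adjoint \(\overline D\). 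An eigenvalue lies in the spectrum, hence \(0\in\sigma_{L^2}(\overline D)\).

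For the pullback case \(\overline V=p^*V\), take \(K\) trivial and \(G=\Gamma\). The local index form of \(\overline D^+\) is then the pullback of the local index form of \(D^+\). By the Atiyah–Singer index theorem, \(\int_M\alpha_{\overline D}=\ind(D^+)\), so \(\ind(D^+)\neq0\) reduces to the previous case. This is Atiyah's \(L^2\)-index theorem \cite{AtiyahL2Index}.

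The only delicate point is the identification of the domains: the closure of \(\overline D\) must split as \(\overline D^+\oplus(\overline D^+)^*\), so that an \(L^2\)-kernel element of \(\overline D^-\) is really in the kernel of the self-adjoint operator. Completeness and Chernoff's theorem handle this. Everything else is immediate.
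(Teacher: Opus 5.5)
Your proposal is correct and follows the paper's argument. The index formula forces one of \(\dim_G\ker\overline D^\pm\) to be nonzero, so the corresponding kernel is nonzero, since a zero projection has vanishing Schwartz kernel; elliptic regularity then gives smoothness. Your remarks on essential self-adjointness and on reducing the pullback case to Atiyah--Singer through the local index form make explicit what the paper leaves implicit.
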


\begin{proof}
By the preceding theorem, at least one of
\(\dim_G\ker\overline D^+\) and
\(\dim_G\ker\overline D^-\) is nonzero.  Hence the
corresponding Hilbert-space kernel is nonzero.  Elliptic regularity makes
its elements smooth.
\end{proof}

The extra generality of \Cref{thm:compact-extension-L2-index} is essential
for Gromov's construction: see \Cref{subsec:Gromov-central-index}.

\section{Almost harmonic twisted spinors}
\label{sec:almost-harmonic-twisted-spinors}

We keep the notation of \Cref{subsec:twisted-dirac}. Thus
\(F=X\times\C\) is equipped with the family of unitary connections
\[
        \nabla^{F,s}=d+is\eta,
        \qquad s\in\R,
\]
and \(D_s\) denotes the corresponding twisted Dirac operator on
\(\Sigma_gX\otimes F\). We isolate the analytic input used in the
scalar-curvature comparison argument in the following definition.

\subsection{The almost-harmonic condition}
\label{subsec:almost-harmonic-definition}

\begin{definition}
\label{def:almost-harmonic-twisted-spinors}
We say that the data \((X,g,\eta)\) admit almost harmonic twisted spinors,
asymptotically as \(s\to0\), if there exist a sequence \((s_j)_{j\geq1}\subset[0,1]\), with \(s_j\longrightarrow0\), and smooth twisted spinors
\[
        \psi_j\in
        C^\infty(X,\Sigma_gX\otimes F)
        \cap
        L^2(X,\Sigma_gX\otimes F)
\]
such that
\[
        D_{s_j}\psi_j\in L^2(X,\Sigma_gX\otimes F),
        \qquad
        \|\psi_j\|_{L^2}=1,
        \qquad
        \|D_{s_j}\psi_j\|_{L^2}\longrightarrow0.
\]
\end{definition}

The case \(s_j=0\) for every \(j\) is allowed. In that case one recovers
the usual untwisted \(L^2\)-almost harmonic spinors for the lifted spin
Dirac operator \(D_0=D_X\). Notice also that the twisting-curvature term in
the Lichnerowicz formula is \(s_jR_\omega\), and therefore it is
automatically asymptotically negligible by \eqref{eq:Romega-bound}.

\begin{remark}
\label{rem:almost-harmonic-independent-primitive}
Suppose that \(\eta'\in\Omega^1(X)\) is another primitive of
\(\pi^*\omega\). Since \(X\) is simply connected, there exists
\(\phi\in C^\infty(X,\R)\) such that \(\eta'=\eta+d\phi\).
Let \(D_s^\eta\) and \(D_s^{\eta'}\) be the twisted Dirac operators
associated with the connections \(d+is\eta\) and \(d+is\eta'\),
respectively. For \(s\in\R\), multiplication by \(e^{-is\phi}\) defines a
fiberwise unitary bundle automorphism
\[
        G_s:\Sigma_gX\otimes F\longrightarrow\Sigma_gX\otimes F.
\]
A direct computation gives
\[
        D_s^{\eta'}G_s=G_sD_s^\eta
\]
on \(C_c^\infty(X,\Sigma_gX\otimes F)\), and hence also for the
self-adjoint \(L^2\)-closures. Thus, if \((s_j,\psi_j)\) is an almost
harmonic sequence for \((X,g,\eta)\), then
\[
        \psi_j':=G_{s_j}\psi_j
\]
is an almost harmonic sequence for \((X,g,\eta')\). In particular, the
existence of almost harmonic twisted spinors is independent of the chosen
primitive.
\end{remark}

\begin{remark}
\label{rem:almost-harmonic-two-cases}
The constructions below produce the sequences in
\Cref{def:almost-harmonic-twisted-spinors} in two ways.
In the untwisted case, \(s_j=0\) for every \(j\), and
\(0\in\sigma_{L^2}(D_X)\) yields an almost harmonic sequence by the
spectral theorem. In the genuinely twisted case, one chooses
\(s_j\in(0,1]\) with \(s_j\to0\); the nonvanishing
\(\Gamma_{s_j}\)-\(L^2\)-index then produces normalized \(L^2\)-harmonic
twisted spinors satisfying \(D_{s_j}\psi_j=0\). The first mechanism is
treated in \Cref{subsec:zero-in-the-spectrum}, and the second in
\Cref{subsec:Gromov-central-index}.
\end{remark}

\subsection{The zero-in-the-spectrum mechanism}
\label{subsec:zero-in-the-spectrum}

The first mechanism is the direct, untwisted spectral one.  In the notation
of \Cref{subsec:twisted-dirac}, it is obtained by taking
\(s_j=0\) for every \(j\).  Equivalently, for this mechanism one may take
\(\omega=0\) and \(\eta=0\), so that \(D_s=D_X\) for every \(s\).  Thus the
auxiliary line is the trivial flat line and the curvature error vanishes
identically.

\begin{proposition}
\label{prop:zero-in-spectrum-produces-almost-harmonic}
Let \((X,g)\) be a complete spin Riemannian manifold and let \(D_X\) be its
spin Dirac operator.  If \(0\in\sigma_{L^2}(D_X)\), then there exist spinors \(\psi_j\in C_c^\infty(X,\Sigma_gX)\) such that
\[
        \|\psi_j\|_{L^2}=1,
        \qquad
        \|D_X\psi_j\|_{L^2}\longrightarrow0.
\]
Equivalently, regarding \(\psi_j\) as spinors with coefficients in the
trivial flat line, they form an almost harmonic sequence with
\(s_j=0\) for every \(j\).
\end{proposition}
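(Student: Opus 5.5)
Since \(D_X\) is self-adjoint (by Chernoff's theorem, as recalled in \Cref{subsec:twisted-dirac}), the plan is to combine the Weyl criterion with the fact that \(C_c^\infty\) is a core. First, because \(0\in\sigma_{L^2}(D_X)\), the spectral theorem gives elements \(\varphi_j\in\operatorname{dom}(D_X)\) with \(\|\varphi_j\|_{L^2}=1\) and \(\|D_X\varphi_j\|_{L^2}<1/j\). To see this, let \(E\) be the spectral measure of \(D_X\). The projection \(E((-1/j,1/j))\) is nonzero, since otherwise \(0\) would lie in the resolvent set. Any unit vector \(\varphi_j\) in its range then satisfies
\[
\|D_X\varphi_j\|^2=\int_{(-1/j,1/j)}\lambda^2\,d\langle E_\lambda\varphi_j,\varphi_j\rangle\le j^{-2}.
\]

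Next I would replace each \(\varphi_j\) by a compactly supported smooth spinor. The self-adjoint operator \(D_X\) is by definition the closure of its restriction to \(C_c^\infty(X,\Sigma_gX)\), so this space is a core. Hence there are \(\chi_j\in C_c^\infty\) with
\[
\|\chi_j-\varphi_j\|_{L^2}+\|D_X\chi_j-D_X\varphi_j\|_{L^2}<1/j.
\]
In particular \(\|\chi_j\|_{L^2}\ge 1-1/j>0\) for \(j\ge2\), and we set \(\psi_j:=\chi_j/\|\chi_j\|_{L^2}\). These spinors satisfy \(\|\psi_j\|_{L^2}=1\) and
\[
\|D_X\psi_j\|_{L^2}\le\frac{2/j}{1-1/j}\longrightarrow0.
\]

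Finally, the reformulation in the statement is immediate. With \(\omega=0\), \(\eta=0\) and \(s_j=0\), the operator \(D_{s_j}\) is \(D_X\) acting on \(\Sigma_gX\otimes(X\times\C)\cong\Sigma_gX\). The spinors \(\psi_j\) are smooth, and compact support makes both \(\psi_j\) and \(D_X\psi_j\) lie in \(L^2\). So the conditions of \Cref{def:almost-harmonic-twisted-spinors} hold.

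I do not expect a real obstacle here. The only point needing care is that \(D_X\) is essentially self-adjoint on \(C_c^\infty\), so that its closure coincides with the operator whose spectrum is meant and \(C_c^\infty\) really is a core. Completeness of \((X,g)\) gives exactly this.
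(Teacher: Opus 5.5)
Your proposal is correct and follows the paper's proof. You produce a Weyl sequence for \(0\in\sigma_{L^2}(D_X)\), deriving it from the spectral measure instead of citing Weyl's criterion. You then approximate it in the graph norm from the core \(C_c^\infty(X,\Sigma_gX)\) and renormalize.
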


\begin{proof}
By Weyl's criterion \cite[Theorem~VII.12 and p.~264]{ReedSimonI}, for each
\(j\) there exists a vector \(\phi_j\) in the domain of \(D_X\) such that
\[
        \|\phi_j\|_{L^2}=1,
        \qquad
        \|D_X\phi_j\|_{L^2}<\frac1j.
\]
Since \((X,g)\) is complete, \(C_c^\infty(X,\Sigma_gX)\) is a core for
\(D_X\).  Approximating \(\phi_j\) in the graph norm and normalizing, we may
choose
\[
        \psi_j\in C_c^\infty(X,\Sigma_gX),
        \qquad
        \|\psi_j\|_{L^2}=1,
        \qquad
        \|D_X\psi_j\|_{L^2}\longrightarrow0.
\]
Since the \(\psi_j\) are compactly supported and smooth, their covariant
derivatives belong to \(L^2\).  For the trivial flat line the Lichnerowicz
curvature term is zero.  Hence this is an almost harmonic sequence in the
sense of \Cref{def:almost-harmonic-twisted-spinors}, with \(s_j=0\) for all
\(j\).
\end{proof}

We next recall the finite-cover form of enlargeability introduced by
Gromov--Lawson \cite[p.~210]{GromovLawson1980}. Following the terminology
of \cite{HankeSchickEnlargeability}, we call this property compact
enlargeability, to distinguish it from the version allowing infinite
covers. Gromov--Lawson included a spin condition in their original
formulation; here we separate that condition from compact enlargeability
and impose virtual spin explicitly when it is needed.

\begin{definition}
\label{def:compact-enlargeability}
A closed connected smooth manifold \(M^n\) is compactly enlargeable if,
for one, and hence every, Riemannian metric \(h\) on \(M\), the following
holds: for every \(\varepsilon>0\), there exist a finite connected
orientable covering \(p_\varepsilon:M_\varepsilon\longrightarrow M\) and a smooth map \(f_\varepsilon:M_\varepsilon\longrightarrow S^n\) of nonzero degree such that
\begin{equation}
\label{eq:epsilon-contracting}
        |df_\varepsilon(v)|_{S^n}
        \leq
        \varepsilon |v|_{p_\varepsilon^*h}
\end{equation}
for every \(v\in TM_\varepsilon\), where \(S^n\) carries the unit round
metric.
\end{definition}

The metric-independence follows because any two metrics on the compact
manifold \(M\) are uniformly equivalent, with the same comparison
constants on all their lifts.

\begin{proposition}
\label{prop:compact-enlargeability-zero-spectrum}
Let \(M^n\) be a closed connected compactly enlargeable virtually spin
manifold, let \(g_M\) be any Riemannian metric on \(M\), and let
\((X,g)\to(M,g_M)\) be the universal Riemannian cover, equipped with the
spin structure lifted from a finite spin cover of \(M\). Then its spin
Dirac operator satisfies
\[
        0\in\sigma_{L^2}(D_X).
\]
\end{proposition}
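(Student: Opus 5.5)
We argue by contradiction, following the Gromov--Lawson relative index argument adapted to the universal cover. Suppose \(0\notin\sigma_{L^2}(D_X)\). Then there is \(\delta>0\) such that
\[
        \|D_X\psi\|_{L^2}\geq\delta\|\psi\|_{L^2}
        \qquad\text{for all } \psi\in C_c^\infty(X,\Sigma_gX).
\]
Passing to the finite spin cover does not change \(X\), and the spin structure on \(X\) is the lifted one. So we may assume from the start that \(M\) is spin. Any finite cover \(M_\varepsilon\) of \(M\) has universal cover \(X\), and the lifted metric is locally isometric to \(g_M\).

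The plan is to use the \(\varepsilon\)-contracting maps \(f_\varepsilon\colon M_\varepsilon\to S^n\) to build almost harmonic spinors on \(X\), contradicting this gap.

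\textbf{Step 1: a twisted operator on the finite cover with nonzero index.} Fix a bundle \(E_0\to S^n\), as in Gromov--Lawson (for \(n\) even), with nonzero top Chern character; for \(n\) odd, first pass to \(M\times S^1\) and use the standard suspension trick. Set \(E_\varepsilon=f_\varepsilon^*E_0\) on \(M_\varepsilon\), with the pulled-back connection. Its curvature satisfies \(\|R^{E_\varepsilon}\|\leq C\varepsilon^2\), where \(C\) depends only on \(E_0\) and \(n\). Let \(\mathbb C^N\) be the trivial flat bundle of the same rank. The Atiyah--Singer index theorem gives
\[
        \ind D_{E_\varepsilon}-\ind D_{\mathbb C^N}=\pm\deg(f_\varepsilon)\int_{S^n}\ch(E_0)\neq0.
\]
Hence one of the two operators has nonzero index. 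By \Cref{cor:nonzero-index-L2-kernel}, in the pullback case over the normal cover \(X\to M_\varepsilon\) (pass to a normal subcover first if necessary), the lifted operator on \(X\) has nonzero \(L^2\)-kernel.

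\textbf{Step 2: transfer to the untwisted operator.} If the lift of \(D_{\mathbb C^N}\) has kernel, we are done immediately, since that operator is \(N\) copies of \(D_X\). Otherwise the lift \(\widetilde D_{E_\varepsilon}\) on \(X\) has a nonzero \(L^2\) harmonic section \(\psi\). By the Lichnerowicz formula \eqref{eq:general-twisted-lichnerowicz}, the twisting term has norm \(O(\varepsilon^2)\).

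We now compare with the trivial twist. The bundle \(\widetilde E_\varepsilon\) is pulled back from \(S^n\) by a map that is \(\varepsilon\)-Lipschitz. Hence on geodesic balls of radius \(\sim\varepsilon^{-1/2}\) it is almost flat, and it can be trivialized by parallel transport with connection form \(O(\varepsilon^{1/2})\).

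\textbf{Step 3: localization.} Use a partition of unity \(\{\chi_k\}\) on \(X\) subordinate to such balls, with \(|d\chi_k|\lesssim\varepsilon^{1/2}\) and bounded overlap (bounded geometry holds because \(X\) covers a compact manifold). Each piece \(\chi_k\psi\), read in the local almost-parallel trivialization, becomes a compactly supported \(\mathbb C^N\)-valued spinor \(\phi_k\) with
\[
        \|D_X\phi_k\|\lesssim\varepsilon^{1/2}\|\chi_k\psi\|+\|\,|d\chi_k|\,\psi\|.
\]
Summing over \(k\) and choosing some \(k\) with \(\|D_X\phi_k\|\leq C\varepsilon^{1/2}\|\phi_k\|\), we get, for a suitable component of \(\mathbb C^N\), a compactly supported spinor \(\phi\) with
\[
        \|D_X\phi\|\leq C'\varepsilon^{1/2}\|\phi\|.
\]
Letting \(\varepsilon\to0\) contradicts the spectral gap \(\delta\). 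Hence \(0\in\sigma_{L^2}(D_X)\).

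The main obstacle is the localization in Step 3. We must control the error from almost-flat trivializations uniformly on \(X\), and make sure the chosen piece carries a definite fraction of the \(L^2\)-mass, so that the Rayleigh-type averaging argument works. A possible alternative route is to use Hanke--Schick's result that compactly enlargeable spin manifolds have nonzero (maximal, and by their finite-cover argument also reduced) Rosenberg index, and then use that this implies zero in the spectrum. However, the direct localization argument above is the one I would attempt first.
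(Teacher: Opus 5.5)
Your Step 1 is sound and uses the same dichotomy as the paper. Either $\widehat A(M_\varepsilon)\neq0$, and Atiyah's $L^2$-index theorem gives $\ker D_X\neq0$ directly, or the lift of $D_{E_\varepsilon}$ to $X$ has a nonzero $L^2$-harmonic section $\psi$. (Since $X\to M_\varepsilon$ is a universal cover, it is automatically normal.)

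\textbf{The gap is Step 3.} Your averaging argument needs $\sum_k\chi_k^2=1$ together with a pointwise bound $\sum_k|d\chi_k|^2\le C\varepsilon$, where $C$ does not depend on $\varepsilon$. This is because $\sum_k\|D_X\phi_k\|^2$ is controlled by $\int_X\bigl(\sum_k|d\chi_k|^2\bigr)|\psi|^2\dmu$ plus the connection-form error.

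Bounded geometry of $X$ does not give this bound. It controls the multiplicity of a cover by balls of a \emph{fixed} radius, with a constant that depends on the radius. For cutoffs adapted to a maximal $R$-separated net with $R=\varepsilon^{-1/2}$, the only bound it yields is a volume ratio of the form $\mathrm{vol}\,B(cR)/\mathrm{vol}\,B(R/2)$. This ratio grows exponentially in $R$ as soon as $\pi_1(M)$ has exponential growth, for instance for closed hyperbolic $M$, which is one of the paper's main applications of this proposition. After normalizing to $\sum_k\chi_k^2=1$, both $|d\chi_k|$ and $\sum_k|d\chi_k|^2$ pick up powers of this multiplicity, and the $O(\varepsilon)$ bound is lost.

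Producing such partitions with supports of controlled size in $X$ for every $\varepsilon$ amounts to a coarse-geometric property of $\pi_1(M)$, of property A or finite asymptotic dimension type. That property is not among the hypotheses. You correctly flag this step as the main obstacle, but the proposal does not overcome it.

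\textbf{The missing idea is that no localization on $X$ is needed}, because all the smallness comes from the target sphere. The paper stabilizes the bundle:
\begin{itemize}
\item It chooses $F_0$ with $E_0\oplus F_0\cong S^n\times\C^k$ and writes the direct-sum connection as $d+A_0$ in this global trivialization.
\item The pulled-back form $A_\varepsilon:=\widetilde f_\varepsilon^*A_0$ then satisfies $\|A_\varepsilon\|_{L^\infty(X)}\le C\varepsilon$ on all of $X$.
\item Since the direct-sum connection preserves the two summands, $(\psi,0)$ lies exactly in the kernel of $D_X\otimes1_{\C^k}+c(A_\varepsilon)$.
\item Hence $\|(D_X\otimes1_{\C^k})(\psi,0)\|_{L^2}\le C\varepsilon\|\psi\|_{L^2}$, and no partition of unity is needed.
\end{itemize}

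Alternatively, you can rescue your own averaging scheme by pulling back a fixed finite partition of unity $\{\theta_l\}$ on $S^n$, subordinate to open sets on which $E_0$ is trivial. The functions $\chi_l=\theta_l\circ\widetilde f_\varepsilon$ satisfy $|d\chi_l|\le C\varepsilon$, their number is independent of $\varepsilon$, and the local connection forms are $O(\varepsilon)$.

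Finally, for odd $n$ you should make the ``standard suspension trick'' explicit. The paper uses the identity $D_{X\times\R}^2=D_X^2\otimes1+1\otimes D_\R^2$: a spectral gap for $D_X$ would then give one for $D_{X\times\R}$, contradicting the even-dimensional case applied to $M\times S^1$.
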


\begin{proof}
We first reduce to the spin case. Let \(q:M'\to M\) be a finite spin
cover. Compact enlargeability passes to \(M'\): given an enlarging cover
\(p_\varepsilon:M_\varepsilon\to M\), choose a connected component
\(N_\varepsilon\) of the fiber product \(M'\times_M M_\varepsilon\).
The two projections make \(N_\varepsilon\) a finite cover of both \(M'\) and \(M_\varepsilon\). 
The composite
\[
        N_\varepsilon\longrightarrow M_\varepsilon
        \xrightarrow{f_\varepsilon}S^n
\]
is still \(\varepsilon\)-contracting and has degree
\(\deg(N_\varepsilon\to M_\varepsilon)\deg(f_\varepsilon)\ne0\).
Therefore \(M'\) is compactly enlargeable. Its universal Riemannian cover
is again \((X,g)\), with the spin structure pulled back from \(M'\). We
may consequently assume from now on that \(M\) is spin.

Suppose first that \(n=2m\). If \(\widehat A(M)\ne0\), the conclusion
follows from \cite[Proposition~3.2 and Remark~3.3]{Davaux2003}. Let us
therefore assume that
\[
        \widehat A(M)=0.
\]
For every finite covering \(p_\varepsilon:M_\varepsilon\to M\), multiplicativity under finite covers gives \(\widehat A(M_\varepsilon)=0\).
For every \(\varepsilon>0\), choose
\(p_\varepsilon:M_\varepsilon\to M\) and
\(f_\varepsilon:M_\varepsilon\to S^n\) as in
\Cref{def:compact-enlargeability}. The spin structure of \(M\) pulls back
to \(M_\varepsilon\).

Choose a Hermitian bundle \(E_0\to S^n\) whose reduced \(K\)-theory class
is a nonzero Bott class, and choose a Hermitian complement \(F_0\) in a
trivial bundle. Endow both bundles with unitary connections. Then
\begin{equation}
\label{eq:bott-complementary-bundles}
        E_0\oplus F_0\cong S^n\times\C^k,
        \qquad
        \left\langle
        \ch(E_0),[S^n]
        \right\rangle\ne0.
\end{equation}
Set
\[
        E_\varepsilon=f_\varepsilon^*E_0,
        \qquad
        F_\varepsilon=f_\varepsilon^*F_0.
\]
Davaux's enlargeability argument \cite[Section~6.1]{Davaux2003} gives a
nonzero smooth \(L^2\)-harmonic spinor with coefficients in the lift
\(\widetilde E_\varepsilon\) of \(E_\varepsilon\) to \(X\). Choose it
normalized and denote it by
\[
        \psi_\varepsilon^E
        \in L^2(X,\Sigma_gX\otimes\widetilde E_\varepsilon),
        \qquad
        \|\psi_\varepsilon^E\|_{L^2}=1,
        \qquad
        D_{\widetilde E_\varepsilon}\psi_\varepsilon^E=0.
\]
Fix the trivialization in \eqref{eq:bott-complementary-bundles}, and write
the direct-sum connection on \(E_0\oplus F_0\) as \(d+A_0\). If
\(\widetilde f_\varepsilon:X\to S^n\) denotes the composite of the
covering \(X\to M_\varepsilon\) with \(f_\varepsilon\), then the lifted
connection is
\[
        d+A_\varepsilon,
        \qquad
        A_\varepsilon:=\widetilde f_\varepsilon^*A_0.
\]
Under the induced identification
\[
        \widetilde E_\varepsilon\oplus\widetilde F_\varepsilon
        \cong X\times\C^k,
\]
let \(D_{A_\varepsilon}\) denote the twisted Dirac operator on
\(\Sigma_gX\otimes\C^k\) associated with the connection
\(d+A_\varepsilon\). The direct-sum connection preserves the two summands,
and hence
\[
        D_{A_\varepsilon}
        =
        D_{\widetilde E_\varepsilon}
        \oplus
        D_{\widetilde F_\varepsilon}.
\]
Therefore
\[
        \psi_\varepsilon:=(\psi_\varepsilon^E,0)
\]
satisfies
\[
        \psi_\varepsilon
        \in L^2(X,\Sigma_gX\otimes\C^k),
        \qquad
        \|\psi_\varepsilon\|_{L^2}=1,
        \qquad
        D_{A_\varepsilon}\psi_\varepsilon=0.
\]
Since the covering maps are local isometries and \(f_\varepsilon\) is
\(\varepsilon\)-contracting, there is a constant \(C>0\), independent of
\(\varepsilon\), such that
\begin{equation}
\label{eq:enlargeability-connection-bound}
        \|A_\varepsilon\|_{L^\infty(X)}
        \leq C\varepsilon.
\end{equation}
Relative to the trivial connection, this operator is the bounded perturbation
\[
        D_{A_\varepsilon}
        =
        D_X\otimes1_{\C^k}+c(A_\varepsilon).
\]
Since \(D_{A_\varepsilon}\psi_\varepsilon=0\) and
\(c(A_\varepsilon)\) is bounded, this identity implies that
\(\psi_\varepsilon\) belongs to the domain of
\(D_X\otimes1_{\C^k}\), and \eqref{eq:enlargeability-connection-bound}
gives
\[
        \bigl\|
        (D_X\otimes1_{\C^k})\psi_\varepsilon
        \bigr\|_{L^2}
        \leq C'\varepsilon.
\]
Letting \(\varepsilon\to0\) shows that
\(0\in\sigma_{L^2}(D_X\otimes1_{\C^k})\). This operator is a finite
orthogonal sum of copies of \(D_X\), so
\[
        0\in\sigma_{L^2}(D_X).
\]

Now suppose that \(n\) is odd. Since both \(M\) and \(S^1\) are compactly
enlargeable, the product property for enlargeable manifolds
\cite[p.~210]{GromovLawson1980} implies that \(M\times S^1\) is compactly
enlargeable. Since \(M\) is spin, so is \(M\times S^1\).

The even-dimensional case, applied to \(M\times S^1\) with the product
metric, now yields \(0\in\sigma_{L^2}(D_{X\times\R})\).
Under the standard product identification of spinor bundles, up to the
usual finite multiplicity,
\[
        D_{X\times\R}^2
        =
        D_X^2\otimes1+1\otimes D_\R^2
\]
as nonnegative self-adjoint operators. If
\(0\notin\sigma_{L^2}(D_X)\), then, for some \(a>0\),
\(D_X^2\geq a^2\). The product formula would imply
\(D_{X\times\R}^2\geq a^2\), contradicting
\(0\in\sigma_{L^2}(D_{X\times\R})\). 
Therefore \(0\in\sigma_{L^2}(D_X)\), which completes the proof.
\end{proof}

\subsection{Gromov's central extension and the twisted
\texorpdfstring{\(L^2\)}{L2}-index mechanism}
\label{subsec:Gromov-central-index}

We keep the notation of \Cref{subsec:twisted-dirac} and assume throughout
this subsection that \(n\) is even. We then have the chirality splitting
\[
\Sigma_gX=\Sigma_g^+X\oplus\Sigma_g^-X. 
\]
Moreover, the twisted Dirac operator is odd with respect to the corresponding splitting of \(\Sigma_gX\otimes F\):
\[
    D_s^\pm\colon C^\infty(X;\Sigma_gX^\pm\otimes F)\to C^\infty(X;\Sigma_gX^\mp\otimes F),
\]
where \(D_s^\pm=D_s|_{\Sigma_gX^\pm\otimes F}\).

Since the spin structure on \(X\) is pulled back from the fixed spin structure on \(M\), every deck transformation \(\gamma\in\Gamma\) has a canonical lift to the spinor bundle. 
Thus \(\Sigma_gX=\pi^*\Sigma_{g_M}M\) carries a unitary \(\Gamma\)-action covering the deck action.
This action preserves Clifford multiplication, the pulled-back spin connection, and the chirality splitting.

\begin{remark}
Our assumption that \(M\) is spin makes the lifted action of the deck group on
the spinor bundle canonical. If only the universal cover \(X\) is spin, one
must replace the deck group by Rosenberg's \(\{\pm1\}\)-extension
\cite[Section 3.B]{Rosenberg1983}. In the twisted setting, this extension must
be combined with Gromov's \(U(1)\)-central extension introduced below. We do
not pursue this more general formulation here.
\end{remark}

We recall Gromov's central extension \cite{Gromov1991} in the form used by Ballmann \cite[Section 8.2, Lemma 8.29]{Bal06}.
Let \(\Gamma_s\) be the group of unitary bundle automorphisms \(\Phi:F\longrightarrow F\) which cover deck transformations and preserve the connection \(\nabla^{F,s}\). 
If \(\Phi\) covers \(\gamma\in\Gamma\), we write
\[
\sigma_s(\Phi)=\gamma .
\]
Since \(X\) is simply connected and the curvature \(F(\nabla^{F,s})=is\pi^*\omega\) is \(\Gamma\)-invariant, by \cite[Lemma~8.29]{Bal06} every deck transformation admits such a connection-preserving lift. 
The kernel consists of constant unitary multiplications on \(F\). 
Hence one obtains a central extension
\[
1\longrightarrow U(1)\longrightarrow \Gamma_s
\stackrel{\sigma_s}{\longrightarrow}\Gamma\longrightarrow 1 .
\]
We refer to \cite[Section 8]{Bal06} for details.

If \(\Phi\in\Gamma_s\) covers \(\gamma=\sigma_s(\Phi)\), let
\[
U_\Phi:C^\infty(X,\Sigma_gX\otimes F)\longrightarrow C^\infty(X,\Sigma_gX\otimes F)
\]
be the induced action
\[
(U_\Phi\varphi)(x)
:=
\bigl(\gamma_\Sigma\otimes\Phi\bigr)_{\gamma^{-1}x}
\bigl(\varphi(\gamma^{-1}x)\bigr),
\]
where \(\gamma_\Sigma\) denotes the lifted action on the spinor factor.
For \(k\ge 0\), we denote by \(U_\Phi^{(k)}\) the induced action on
\[
C^\infty(X,\Lambda^kT^*X\otimes\Sigma_gX\otimes F),
\]
namely
\[
\bigl(U_\Phi^{(k)}A\bigr)_x(Y_1,\ldots,Y_k)
=
\bigl(\gamma_\Sigma\otimes\Phi\bigr)_{\gamma^{-1}x}
\Bigl(
A_{\gamma^{-1}x}
(d\gamma^{-1}_xY_1,\ldots,d\gamma^{-1}_xY_k)
\Bigr).
\]
Thus \(U_\Phi^{(0)}=U_\Phi\). 
If, for each \(\gamma\in\Gamma\), one chooses a lift \(\Phi_{\gamma,s}\in\Gamma_s\), we also write
\[
U_{\gamma,s}:=U_{\Phi_{\gamma,s}},
\qquad
U_{\gamma,s}^{(k)}:=U_{\Phi_{\gamma,s}}^{(k)} .
\]
Changing the lift by an element of the central subgroup \(U(1)\) only multiplies these operators by a constant unit complex number. 
Therefore all norm identities and equivariance identities below are independent of this auxiliary choice.

In the next proposition we collect the properties of \(U_\Phi\) needed in this paper.

\begin{proposition}
\label{prop:gromov-equivariance}
Let \(\Phi\in\Gamma_s\), and let \(\gamma=\sigma_s(\Phi)\). 
Then the following properties hold.
\begin{enumerate}[label=\textup{(\roman*)}]
    \item for every \(k\ge 0\),
\[
|U_\Phi^{(k)}A|(x)=|A|(\gamma^{-1}x).
\]
\item the action preserves Clifford multiplication: for every tangent vector \(Y\) on \(X\),
\[
c_g(Y)\,U_\Phi\varphi
=
U_\Phi\bigl(c_g(d\gamma^{-1}Y)\varphi\bigr).
\]
\item the twisted connection is \(\Gamma_s\)-equivariant:
\[
\nabla^s(U_\Phi\varphi)
=
U_\Phi^{(1)}(\nabla^s\varphi).
\]
\item the operators \(D_s^+\) and \(D_s^-\) are \(\Gamma_s\)-equivariant.
\end{enumerate}
\end{proposition}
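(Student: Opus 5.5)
The plan is to verify the four properties one at a time, working in a local frame. Each property reduces to the fact that \(\gamma_\Sigma\otimes\Phi\) is a fiberwise unitary isomorphism covering the isometry \(\gamma\), and that this isomorphism intertwines the relevant structures. Throughout, \(\Phi\) is a unitary bundle map of \(F\) covering \(\gamma\) and preserving \(d+is\eta\). The spinor lift \(\gamma_\Sigma\) comes from the pulled-back spin structure, so it preserves the spin connection, Clifford multiplication and chirality; this is because \(\gamma\) is an isometry of \(g=\pi^*g_M\) and the spinor data are pulled back from \(M\).

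For (i), I use that \(\gamma_\Sigma\otimes\Phi\) is fiberwise unitary and that \(d\gamma^{-1}_x\) is an isometry \(T_xX\to T_{\gamma^{-1}x}X\). Take an orthonormal frame \((Y_a)\) at \(x\); then \((d\gamma^{-1}Y_a)\) is orthonormal at \(\gamma^{-1}x\). Computing \(|U^{(k)}_\Phi A|^2(x)\) as the sum over multi-indices of \(|(U^{(k)}_\Phi A)_x(Y_{a_1},\dots,Y_{a_k})|^2\) therefore gives the same sum for \(A\) at \(\gamma^{-1}x\).

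For (ii), the identity is the statement that \(\gamma_\Sigma\) intertwines \(c(d\gamma^{-1}Y)\) at \(\gamma^{-1}x\) with \(c(Y)\) at \(x\), while \(\Phi\) acts only on the line factor and commutes with Clifford multiplication. The intertwining follows from the spin-structure lift: \(\gamma_\Sigma\) is induced by the lift of \(d\gamma\) to the spin frame bundle, and Clifford multiplication is \(\mathrm{Spin}\)-equivariant.

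For (iii), I write \(\nabla^s=\nabla^{\Sigma}\otimes1+1\otimes\nabla^{F,s}\) and check equivariance on decomposable local sections \(\sigma\otimes f\); the general case follows by linearity and the Leibniz rule, since the identity is local and \(\mathbb C\)-linear. On the spinor factor, the pulled-back Levi-Civita spin connection commutes with pullback by the isometry lift, so \(\nabla^\Sigma(\gamma_\Sigma\sigma)=\gamma_\Sigma^{(1)}\nabla^\Sigma\sigma\). On the line factor, the defining property of \(\Gamma_s\) gives \(\nabla^{F,s}(\Phi\cdot f)=\Phi^{(1)}\nabla^{F,s}f\). Tensoring these two identities gives (iii).

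For (iv), I take a local orthonormal frame \((Y_a)\) near \(x\) and write \(D_s U_\Phi\varphi=\sum_a c(Y_a)\nabla^s_{Y_a}(U_\Phi\varphi)\). By (iii), \(\nabla^s_{Y_a}(U_\Phi\varphi)=U_\Phi(\nabla^s_{d\gamma^{-1}Y_a}\varphi)\), with the appropriate meaning as a section. By (ii), \(c(Y_a)U_\Phi(\cdot)=U_\Phi(c(d\gamma^{-1}Y_a)\,\cdot)\). Since \((d\gamma^{-1}Y_a)\) is an orthonormal frame at \(\gamma^{-1}x\) and \(D_s\) is frame-independent, the sum equals \(U_\Phi D_s\varphi\). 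Because \(\gamma_\Sigma\) preserves chirality and \(\Phi\) acts on the line factor, \(U_\Phi\) preserves the splitting, so \(D_s^\pm\) are separately equivariant.

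The only step with real content is (iii) on the spinor factor, namely the naturality of the spin connection under isometries that lift to the spin structure. This is standard because \(\gamma\) is a deck transformation and the spin structure is pulled back from \(M\), so I would cite it rather than recompute it. The rest is routine bookkeeping, including the observation that the choice of lift within the \(U(1)\) fiber only introduces a harmless constant phase.
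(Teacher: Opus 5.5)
Your proposal is correct and follows essentially the same route as the paper: (i) from fiberwise unitarity and $\gamma$ being an isometry, (ii) from the spin lift covering $d\gamma$, (iii) by tensoring the deck-invariance of the pulled-back spin connection with the defining connection-preserving property of $\Gamma_s$, and (iv) by contracting (iii) with Clifford multiplication via (ii). In (iv), chirality is preserved because the deck action preserves the orientation pulled back from $M$, and hence the complex volume element; the paper makes this point explicitly.
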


\begin{proof}
These are the standard equivariance properties of the pulled-back spin structure together with Gromov's connection-preserving lift; see \cite[Section~8]{Bal06}.

The norm identity follows because \(\gamma\) is an isometry, the lifted spin action is unitary, and \(\Phi\) is unitary on the line-bundle
factor. 
The Clifford identity follows from the fact that the spin action covers the differential of the deck transformation. 
The spin connection on \(X\) is pulled back from \(M\), hence is preserved by the lifted deck action. 
The line-bundle connection is preserved by the defining property of \(\Gamma_s\). 
Taking the tensor product gives
\[
\nabla^s(U_\Phi\varphi)
=
U_\Phi^{(1)}(\nabla^s\varphi).
\]
Contracting this identity with Clifford multiplication and using the Clifford identity gives
\[
D_sU_\Phi=U_\Phi D_s .
\]
Finally, when \(n\) is even, deck transformations preserve the orientation induced from the spin manifold \(M\). 
Hence the lifted spin action preserves the complex volume element and therefore the chirality splitting.
The chiral equivariance follows.
\end{proof}

By \Cref{prop:gromov-equivariance}, the setting of
\Cref{subsec:equivariant-L2-index} applies with \(G=\Gamma_s\) and
\(K=U(1)\), via the extension
\[
        1\longrightarrow U(1)\longrightarrow\Gamma_s
        \longrightarrow\Gamma\longrightarrow1.
\]
We denote the resulting index by \(\ind^{(2)}_{\Gamma_s}(D_s^+)\).

\begin{proposition}
\label{prop:Gamma-s-index-and-spinors}
For every \(s\in\R\),
\begin{equation}\label{eq:index-formula}
\ind^{(2)}_{\Gamma_s}(D_s^+)
=
\sum_{2\ell+4r=2m}
\frac{(-s)^\ell}{(2\pi)^\ell \ell!}
\int_M
\Ahat_r(\nabla^{TM})\wedge \omega^\ell .
\end{equation}
Consequently, if \(\omega\) is homologically \(\Ahat\)-non-singular and \(\pi^*\omega=d\eta\), then \(X\) admits almost harmonic twisted spinors. More precisely, there exist \(s_j\in(0,1]\), with \(s_j\to0\), and twisted spinors
\[
\psi_j\in
C^\infty(X,\Sigma_gX\otimes F)
\cap
L^2(X,\Sigma_gX\otimes F)
\]
such that
\[
        \|\psi_j\|_{L^2}=1,
        \qquad
        D_{s_j}\psi_j=0 .
\]
\end{proposition}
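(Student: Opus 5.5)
The plan is to apply \Cref{thm:compact-extension-L2-index} with \(\overline M=X\), \(G=\Gamma_s\), \(K=U(1)\), and \(\overline V^\pm=\Sigma_g^\pm X\otimes F\) equipped with the connection \(\nabla^s\). By \Cref{prop:gromov-equivariance}, \(\Gamma_s\) acts by Hermitian bundle isomorphisms covering the deck action and preserving the grading, the connection and Clifford multiplication, so all hypotheses hold. Note that \(\overline V\) does not descend to \(M\) in general, since the lifts \(\Phi\) compose only up to the \(U(1)\)-cocycle; this is exactly why \Cref{thm:compact-extension-L2-index} rather than Atiyah's theorem is needed.

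The first step is to identify the local index form. For a twisted Dirac operator, the canonical local index density is the Atiyah--Singer integrand
\[
        \Ahat(\nabla^{TX})\wedge\ch(\nabla^{F,s}),
\]
taken in top degree. Since \(F\) is a line bundle with curvature \(is\,\pi^*\omega\) by \eqref{eq:line-curvature}, its Chern character form is
\[
        \exp\Bigl(\tfrac{i}{2\pi}\,is\,\pi^*\omega\Bigr)
        =\exp\Bigl(-\tfrac{s}{2\pi}\pi^*\omega\Bigr)
        =\sum_\ell\frac{(-s)^\ell}{(2\pi)^\ell\ell!}\,\pi^*\omega^\ell .
\]
Both \(\Ahat(\nabla^{TX})\) and \(\pi^*\omega^\ell\) are pullbacks of forms on \(M\); crucially, \(\eta\) enters only through curvature. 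Hence \(\alpha_{\overline D}=\sum\frac{(-s)^\ell}{(2\pi)^\ell\ell!}\Ahat_r(\nabla^{TM})\wedge\omega^\ell\) in degree \(2m\), and \eqref{eq:compact-extension-L2-index} gives \eqref{eq:index-formula}. The sign conventions (\(\frac{i}{2\pi}\) versus \(-\frac{1}{2\pi i}\)) should be checked once, but they only affect the sign of \(s\) and are irrelevant for the consequence.

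For the consequence, the right side of \eqref{eq:index-formula} is a polynomial \(P(s)\) in \(s\). By homological \(\Ahat\)-non-singularity, some coefficient with \(\ell=p\ge1\) is nonzero, so \(P\) is not identically zero. A nonzero polynomial has finitely many roots, so \(P(s)\ne0\) for all but finitely many \(s\in(0,1]\); in particular we may choose \(s_j\in(0,1]\) with \(s_j\to0\) and \(P(s_j)\ne0\). (If only \(\ell=0\) were nonzero, i.e.\ \(\Ahat(M)\ne0\), this would still work, but \(p\ge1\) guarantees that \(P\) is nonconstant or at least nonzero, which is all we need.) \Cref{cor:nonzero-index-L2-kernel} then gives a nonzero smooth \(L^2\)-kernel element of \(D_{s_j}\), which we normalize to obtain \(\psi_j\) with \(\|\psi_j\|_{L^2}=1\) and \(D_{s_j}\psi_j=0\). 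These form an almost harmonic sequence in the sense of \Cref{def:almost-harmonic-twisted-spinors}, since \(D_{s_j}\psi_j=0\in L^2\).

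The main obstacle is the identification of the local index density in the non-descending setting: one must confirm that the density is computed locally from the curvature of \(\nabla^s\), which is \(\Gamma\)-invariant, so that it is the pullback of the Chern--Weil form on \(M\) as asserted abstractly in \Cref{thm:compact-extension-L2-index}. The remaining steps are routine.
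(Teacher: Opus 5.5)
Your proposal is correct and follows essentially the same route as the paper. You apply \Cref{thm:compact-extension-L2-index} with \(G=\Gamma_s\), identify the local index form as the pullback of \(\bigl[\Ahat(\nabla^{TM})\wedge\exp(-s\omega/2\pi)\bigr]_{2m}\) (same sign convention), and use the nonzero \(s^p\)-coefficient of the resulting polynomial to choose \(s_j\to0\) with nonzero index before invoking \Cref{cor:nonzero-index-L2-kernel}.
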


\begin{proof}
Since \(g=\pi^*g_M\) and
\(F(\nabla^{F,s})=is\,\pi^*\omega\), the canonical local index form
upstairs is the pullback of
\[
        \alpha_s
        :=
        \left[
        \Ahat(\nabla^{TM})
        \wedge
        \exp\left(-\frac{s\omega}{2\pi}\right)
        \right]_{2m}.
\]
Thus \Cref{thm:compact-extension-L2-index} gives
\[
        \ind^{(2)}_{\Gamma_s}(D_s^+)
        =
        \int_M\alpha_s,
\]
and expanding the exponential proves \eqref{eq:index-formula}.

Now set
\[
        I(s):=\ind^{(2)}_{\Gamma_s}(D_s^+).
\]
This is a polynomial in \(s\). Since \(\omega\) is homologically
\(\Ahat\)-non-singular, for some \(p\ge1\) and \(q\ge0\), with
\(2p+4q=2m\),
\[
        \int_M\Ahat_q(\nabla^{TM})\wedge\omega^p\ne0.
\]
The coefficient of \(s^p\) in \(I(s)\) is therefore nonzero. Choose
\(s_j\in(0,1]\), with \(s_j\to0\), such that \(I(s_j)\ne0\) for every
\(j\). By \Cref{cor:nonzero-index-L2-kernel}, \(D_{s_j}\) has a nonzero
smooth \(L^2\)-kernel. Choosing and normalizing an element of each kernel
gives the asserted sequence, which is almost harmonic in the sense of
\Cref{def:almost-harmonic-twisted-spinors}.
\end{proof}

\subsection{Examples and applications}
\label{subsec:examples-applications}

We conclude this section by recording several concrete applications of
\Cref{thm:main-two-form,thm:main-zero-in-the-spectrum,cor:compact-enlargeable-rigidity}.
The first three examples exhibit distinct
ways in which the inequality in \Cref{thm:main-two-form} can be strict. The
last two describe familiar classes of manifolds to which
\Cref{cor:compact-enlargeable-rigidity} applies.

\begin{example}\label{ex:product-surfaces-strict}
Let \(M=\Sigma_{g_1}\times\cdots\times\Sigma_{g_m}\), where each
\(\Sigma_{g_i}\) is a closed oriented surface of genus \(g_i\geq 2\), and
\(m\geq 2\). Then \(M\) is spin and aspherical. If \(\omega_i\) is an area
form on \(\Sigma_{g_i}\), then
\[
        \omega=\sum_i\operatorname{pr}_i^*\omega_i
\]
is homologically non-singular and the lift of \(\omega\) to the universal
cover is exact. Hence \Cref{thm:main-two-form} applies to every Riemannian
metric on \(M\).

Moreover, the inequality is strict for every such metric: by Brooks' theorem
\cite{Brooks1981}, \(\lambda_0>0\), since \(\pi_1(M)\) is non-amenable. If
equality held, \Cref{thm:main-two-form} would force
\((\widetilde M,\widetilde g)\) to be real hyperbolic. This is impossible by
Preissmann's theorem (see, for instance,
\cite[Chapter~12, Theorem~3.2]{doCarmoRiemannianGeometry}), since
\(\pi_1(M)\) contains \(\mathbb Z^2\).
\end{example}

\begin{example}\label{ex:symmetric-product-strict}
There are also strict examples with \(\lambda_0=0\). For \(n\geq 2\) and
\(h\geq 2n-1\) such that \(n-h\) is odd, let
\[
        M_h:=\operatorname{SP}^n(\Sigma_h)
\]
be the symmetric product considered by Di Cerbo--Dranishnikov--Jauhari
\cite{DiCerboDranishnikovJauhari2025}. By
\cite[Theorems~4.1 and~9.12]{DiCerboDranishnikovJauhari2025}, \(M_h\) is a
closed virtually spin manifold carrying a symplectically aspherical K\"ahler
form. In particular, the form is homologically non-singular and its lift to
the universal cover is exact. Moreover,
\cite[Corollary~3.5 and Propositions~3.6--3.7]{DiCerboDranishnikovJauhari2025}
identifies \(\pi_1(M_h)\) with \(\mathbb Z^{2h}\) and shows that \(\pi_2(M_h)\)
is infinite. Hence Brooks' theorem \cite{Brooks1981} gives
\[
        \lambda_0(\widetilde M_h,\widetilde g_h)=0
\]
for every Riemannian metric \(g_h\) on \(M_h\). Since \(\widetilde M_h\) is
not contractible, it cannot be Euclidean. Therefore the equality case in
\Cref{thm:main-two-form} is impossible, and
\[
        \inf_{M_h}\scal_{g_h}<0
\]
for every Riemannian metric \(g_h\) on \(M_h\). The nonexistence of metrics
of positive scalar curvature on \(M_h\) was previously established in
\cite[Theorem~10.9(1)]{DiCerboDranishnikovJauhari2025}.
\end{example}

\begin{example}\label{ex:ahat-refined-strict}
Finally, strictness can be detected by the \(\Ahat\)-refinement even when the
top-power condition does not apply. Let \(p\geq 1\) and \(q\geq 0\). If
\(Y^{4q}\) is closed, connected, and spin with
\(\int_Y\Ahat_q(TY)\neq 0\), and \(B^{2p}\) is closed, connected, and spin and
carries a closed two-form \(\omega_B\) with exact lift and
\(\int_B\omega_B^p\neq 0\), then on \(M=Y\times B\), the form
\(\omega=\operatorname{pr}_B^*\omega_B\) satisfies
\[
        \int_M\Ahat_q(TM)\wedge\omega^p\neq 0.
\]
If \(q>0\) and \(Y\) is simply connected, \(M\) does not carry any closed
two-form \(\omega\) whose top power is nonzero and whose lift to the
universal cover is exact. For instance, take \(Y\) to be a \(K3\) surface
and \(B=\Sigma_{g_1}\times\cdots\times\Sigma_{g_p}\), with \(g_i\geq 2\).
In this case \(\pi_1(M)=\pi_1(B)\) is non-amenable, so Brooks' theorem
\cite{Brooks1981} gives \(\lambda_0(\widetilde M,\widetilde g)>0\) for every
Riemannian metric \(g\) on \(M\). Equality would force
\((\widetilde M,\widetilde g)\) to be real hyperbolic, which is impossible
since \(\widetilde M\cong Y\times\widetilde B\) is not contractible. Thus the
inequality is strict for every Riemannian metric on \(M\).
\end{example}

\begin{example}\label{ex:closed-hyperbolic}
Let \(M^n\), \(n\geq 3\), be a closed hyperbolic manifold. Its fundamental
group is residually finite, so \(M\) is compactly enlargeable by
\cite[Proposition~3.3]{GromovLawson1980}. Moreover,
\(M\) is virtually spin by \cite[Theorem~4.1]{LongReidVirtuallySpinning},
after first passing to the orientation double cover if necessary. Suppose
\(g_M\) is any Riemannian metric on \(M\). Let
\(\pi:(X,g)\to(M,g_M)\) be the universal Riemannian cover and set
\(\lambda_0:=\lambda_0(X,g)\). By \cite{Brooks1981}, \(\lambda_0>0\). By
\Cref{cor:compact-enlargeable-rigidity},
\[
    \inf_M \scal_{g_M}
    \leq
    -\frac{4n}{n-1}\lambda_0,
\]
and equality holds if and only if \(g_M\) is hyperbolic with sectional
curvature
\[
    \sec_{g_M}
    \equiv
    -\frac{4\lambda_0}{(n-1)^2}.
\]
\end{example}

\begin{example}\label{ex:three-dimensional}
Let \(M^3\) be a closed connected oriented three-manifold which does not
admit a metric of positive scalar curvature, and let \(g_M\) be any
Riemannian metric on \(M\). Let \(\pi:(X,g)\to(M,g_M)\) be its universal
Riemannian cover and set \(\lambda_0:=\lambda_0(X,g)\). The classification
of closed oriented three-manifolds carrying metrics of positive scalar
curvature \cite[Section~1]{Marques2012}, together with prime decomposition
and geometrization
\cite[Theorem~1.7.3 and Statement~(C.1) in Section~3.2]{AschenbrennerFriedlWilton},
shows that the prime decomposition of \(M\) contains an aspherical summand
\(N\). Every oriented three-manifold is
spin, and \(\pi_1(N)\) is residually finite
\cite[Statements~(C.28)--(C.29)]{AschenbrennerFriedlWilton}. If \(\pi_1(N)\)
contains \(\mathbb Z^2\), then \cite[Theorem~6.1]{GromovLawson1980}, applied
to the expandable torus \(T^2\) and a map \(T^2\to N\) inducing this inclusion,
shows that \(N\) is compactly enlargeable. If \(\pi_1(N)\) contains no
\(\mathbb Z^2\), then \(N\) is atoroidal in the sense of
\cite[Section~1.6]{AschenbrennerFriedlWilton}. Since \(N\) is irreducible and
has infinite fundamental group, the Hyperbolization Theorem
\cite[Theorem~1.7.5]{AschenbrennerFriedlWilton} shows that \(N\) is hyperbolic.
Together with the residual finiteness established above,
\cite[Proposition~3.3]{GromovLawson1980} then shows that \(N\) is compactly
enlargeable. Compact enlargeability
is stable under connected sums with spin manifolds
\cite[p.~210]{GromovLawson1980}, so \(M\) is compactly enlargeable. Therefore
\Cref{cor:compact-enlargeable-rigidity} gives
\[
    \inf_M\scal_{g_M}
    \leq -6\lambda_0.
\]
Equality holds if and only if \(g_M\) has constant sectional curvature
\[
    \sec_{g_M}\equiv-\lambda_0.
\]
Thus an equality metric is flat when \(\lambda_0=0\) and real hyperbolic
when \(\lambda_0>0\).
\end{example}

\section{Scalar upper bound and scalar rigidity}
\label{sec:scalar-rigidity}

In this section we prove the sharp scalar comparison and its equality case from the almost harmonic twisted spinors constructed in the preceding section.

Throughout this section, \((M^n,g_M)\), \(n\ge3\), is a closed connected spin Riemannian manifold, and \(\pi:(X,g)\longrightarrow(M,g_M)\) is its universal Riemannian covering. 
We write \(\lambda_0=\lambda_0(X,g)\) for the bottom of the \(L^2\)-spectrum of \(\Delta_g=d^*d\). 
Let \(\omega\in\Omega^2(M)\) be a closed two-form whose lift to \(X\) is exact,
\[
        \pi^*\omega=d\eta.
\]
As in \Cref{subsec:twisted-dirac}, \(F=X\times\C\) carries the family of unitary connections \(\nabla^{F,s}=d+is\eta\), \(s\in\R\), and \(D_s\) denotes the corresponding twisted Dirac operator on \(\Sigma_gX\otimes F\). 
We denote the associated twisted connection by
\[
        \nabla^s=\nabla^{\Sigma_gX}\otimes1+1\otimes\nabla^{F,s}.
\]

\begin{theorem}
\label{thm:scalar-upper-bound}
Assume that there exist \(s_j\in[0,1]\), with \(s_j\to0\), and twisted spinors
\[
        \psi_j\in C^\infty(X,\Sigma_gX\otimes F)
        \cap L^2(X,\Sigma_gX\otimes F)
\]
such that, after normalization,
\[
        \|\psi_j\|_{L^2}=1,
        \qquad
        \|D_{s_j}\psi_j\|_{L^2}\longrightarrow0.
\]
Then
\begin{equation}
\label{eq:scalar-upper-bound}
        \inf_M\scal_{g_M}
        \le
        -\frac{4n}{n-1}\lambda_0(X,g).
\end{equation}
Moreover, if equality holds in \eqref{eq:scalar-upper-bound}, then
\begin{equation}
\label{eq:scalar-rigidity}
        \scal_{g_M}\equiv -\frac{4n}{n-1}\lambda_0(X,g).
\end{equation}
\end{theorem}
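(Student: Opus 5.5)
Plan: run the Rayleigh--Kato--Lichnerowicz chain on each \(\psi_j\), then read off scalar rigidity from a positivity principle applied to the lengths \(|\psi_j|\).

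\textbf{Step 1: the sharp inequality.} Write \(\kappa:=\inf_M\scal_{g_M}\) and \(r_j:=|\psi_j|\). The refined Kato inequality for twisted spinors with respect to a Dirac-type operator gives, pointwise off the zero set,
\[
|\nabla^{s}\psi|^2\ \ge\ \tfrac{n}{n-1}\,|d|\psi||^2 \;-\; C\,|D_s\psi|\,|\nabla^s\psi| .
\]
Precisely, one decomposes \(\nabla^s\psi\) into its Clifford-trace part (which is \(-\tfrac1n c(e_a)D_s\psi\)) and its twistor part, and applies the improved estimate to the twistor part. The resulting error is controlled by \(|D_s\psi|^2\) plus an \(\varepsilon|\nabla^s\psi|^2\) term. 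Integrating, and using that \(r_j\in W^{1,2}\) (Kato) together with \eqref{eq:rayleigh-inequality}, yields
\[
\|\nabla^{s_j}\psi_j\|^2\ \ge\ \tfrac{n}{n-1}\lambda_0-\delta_j ,\qquad \delta_j\to0 .
\]
This uses that \(\|\nabla^{s_j}\psi_j\|\) is uniformly bounded, which follows from \eqref{eq:integrated-lichnerowicz} since \(\scal\) and \(R_\omega\) are bounded. Now insert this into \eqref{eq:integrated-lichnerowicz} and use \eqref{eq:Romega-bound} with \(s_j\to0\):
\[
o(1)=\|D_{s_j}\psi_j\|^2\ \ge\ \tfrac{n}{n-1}\lambda_0+\tfrac14\kappa-\delta_j-s_jC_\omega .
\]
Letting \(j\to\infty\) gives \eqref{eq:scalar-upper-bound}.

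\textbf{Step 2: equality.} Assume \(\kappa=-\tfrac{4n}{n-1}\lambda_0\). Set \(V:=\scal_g-\kappa\ge0\), a \(\Gamma\)-invariant function, and suppose \(V>0\) somewhere on \(M\). Then \(V\ge c>0\) on some ball \(B\subset M\), and hence on every lifted translate \(\gamma\tilde B\). Keeping the term \(\tfrac14\int V r_j^2\) in the chain above gives
\[
\Bigl(\|dr_j\|^2-\lambda_0\|r_j\|^2\Bigr)+\tfrac{n-1}{4n}\int_X V\,r_j^2\ \longrightarrow\ 0 .
\]
Both summands are nonnegative, so each tends to \(0\). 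We therefore need a Wang--Zhu type positivity principle (\Cref{prop:wang-zhu-positivity}): the operator \(\Delta_g-\lambda_0+\varepsilon V\) has bottom of spectrum \(\ge\mu>0\) whenever \(V\ge0\) is \(\Gamma\)-invariant and not identically zero. Applied to \(r_j\) with \(\|r_j\|=1\), this contradicts the limit above, so \(V\equiv0\), which is \eqref{eq:scalar-rigidity}.

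I expect this positivity principle to be the main obstacle. My first attempt is a contradiction-plus-recentering argument. Suppose \(u_j\in W^{1,2}\), \(\|u_j\|=1\), with \(\|du_j\|^2-\lambda_0\to0\) and \(\int Vu_j^2\to0\). Cover \(X\) by translates of a fundamental domain. A local Poincaré inequality on each unit cell \(\gamma\mathcal F\cup\gamma\tilde B\) (a fixed compact set, uniformly by cocompactness) shows that the mass of \(u_j\) on each cell is controlled by its mass on \(\gamma\tilde B\) plus its gradient energy there. The mass on the translates of \(\tilde B\) is small, so the total mass is bounded by \(C\)(small) plus a gradient term, and the gradient term must then be compared with \(\lambda_0\).

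To make this comparison work I would instead use the ground-state transform. Take a positive \(\lambda_0\)-harmonic function \(f\) (which exists on \(X\)) and write \(u=f w\). Then
\[
\|du\|^2-\lambda_0\|u\|^2=\int f^2|dw|^2 .
\]
A weighted local Poincaré inequality on cells, with Harnack used to compare the values of \(f\) across a cell, gives
\[
\int_{\text{cell}} f^2w^2\ \le\ C\Bigl(\int_{\text{cell}} f^2|dw|^2+\int_{\gamma\tilde B}f^2w^2\Bigr) ,
\]
with \(C\) uniform in \(\gamma\) because Harnack constants are deck invariant. Summing over cells contradicts \(\|u_j\|=1\).
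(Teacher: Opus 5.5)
Your proposal is correct and follows essentially the paper's route. The inequality comes from the Lichnerowicz--refined Kato--Rayleigh chain; the paper uses the exact Kato-defect identity, which yields the error term $\frac{2}{n-1}\|D_{s_j}\psi_j\|\,\|dr_j\|$, where you use a twistor-decomposition estimate with an absorbed error. Rigidity then comes from the Wang--Zhu-type positivity principle, proved exactly as you sketch: ground-state transform, Poincar\'e inequality with control on an open set, deck-invariant Harnack constants, and bounded-overlap summation. The paper takes its cells to be translates of a fixed connected smooth relatively compact domain $\Omega\supset\overline{\mathcal F}$ meeting a lift of the positivity set, so that the local Poincar\'e inequality holds with a uniform constant.
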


\subsection{The Kato defect}
\label{subsec:refined-kato-defect}

Let \(E\to X\) be a Hermitian vector bundle with unitary connection, and let
\[
        D^E:C^\infty(X,\Sigma_gX\otimes E)
        \longrightarrow
        C^\infty(X,\Sigma_gX\otimes E)
\]
be the associated twisted Dirac operator. 
Let \(q:X\to[0,\infty)\) be a continuous function and set
\[
        U_q:=\{x\in X:q(x)>0\}.
\]
Assume that \(q|_{U_q}\) is smooth. 
For \(\psi\in C^\infty(X,\Sigma_gX\otimes E)\), define the \(q\)-Kato defect on \(U_q\) by
\begin{equation}
\label{eq:kato-defect-definition}
        P_q^E(\psi)(Y)
        :=
        \nabla_Y^{\Sigma_gX\otimes E}\psi
        -\frac n{n-1}Y(\log q)\psi
        -\frac1{n-1}c(Y)c(\nabla^g\log q)\psi.
\end{equation}
When \(q>0\) is smooth, this is defined globally. 
When \(q=r:=|\psi|\), we extend \(P_r^E(\psi)\) by zero on \(X\setminus U_r\). 
This convention is compatible with the weak interpretation, since \(\nabla\psi=0\) almost everywhere on the zero set of \(\psi\); see \cite[Lemma~7.7]{GilbargTrudinger} applied to local components.

When \(E=F\) is equipped with \(\nabla^{F,s}=d+is\eta\), we write
\[
        P_q^s:=P_q^F.
\]

\begin{lemma}
\label{lem:kato-defect-L2}
Let \(\psi\in C^\infty(X,\Sigma_gX\otimes E)\), and set \(r:=|\psi|\).
Then \(P^E_r(\psi)\) defines an element of \(L^2_{\mathrm{loc}}\bigl(X,T^*X\otimes\Sigma_gX\otimes E\bigr)\).
Moreover, if \(\nabla^{\Sigma_gX\otimes E}\psi\in L^2\bigl(X,T^*X\otimes\Sigma_gX\otimes E\bigr)\),
then \(P^E_r(\psi)\in L^2\bigl(X,T^*X\otimes\Sigma_gX\otimes E\bigr)\).
\end{lemma}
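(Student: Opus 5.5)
The plan is a pointwise bound on \(U_r\), where the only dangerous term, \(\nabla^g\log r = \nabla^g r/r\), becomes harmless once it is multiplied by \(\psi\).

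On \(U_r=\{\psi\neq0\}\) the function \(r=|\psi|\) is smooth, since it is the square root of the smooth positive function \(|\psi|^2\). Differentiating \(r^2=|\psi|^2\) with a metric connection gives
\[
        2r\,dr(Y)=2\operatorname{Re}\langle\nabla_Y\psi,\psi\rangle .
\]
By Cauchy--Schwarz this yields Kato's inequality \(|dr|\le|\nabla\psi|\) pointwise on \(U_r\).

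We then bound the three terms of \eqref{eq:kato-defect-definition} separately on \(U_r\).
\begin{itemize}
\item The first term is \(\nabla_Y\psi\) itself.
\item For the second term,
\[
        \bigl|Y(\log r)\psi\bigr|=\frac{|dr(Y)|}{r}\,r\le|\nabla\psi|\,|Y| .
\]
\item For the third term, Clifford multiplication by a vector \(v\) has operator norm \(|v|\), because \(c(v)^2=-|v|^2\) and \(c(v)\) is skew. Hence
\[
        \bigl|c(Y)c(\nabla^g\log r)\psi\bigr|\le|Y|\,\frac{|dr|}{r}\,r\le|Y|\,|\nabla\psi| .
\]
\end{itemize}
Combining the three bounds gives
\[
        |P_r^E(\psi)|\le C_n|\nabla^{\Sigma_gX\otimes E}\psi|
\]
on \(U_r\), with a dimensional constant \(C_n\) (for instance \(C_n=1+\sqrt n\,\tfrac{n+1}{n-1}\) when the norm is taken over an orthonormal frame). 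On \(X\setminus U_r\) the defect vanishes by the extension convention, so the same inequality holds trivially there.

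It remains to check measurability. The field \(P_r^E(\psi)\) is smooth on the open set \(U_r\) and zero on the closed set \(X\setminus U_r\), so it is a measurable section. It is dominated by \(C_n|\nabla\psi|\), which is continuous because \(\psi\) is smooth, and therefore lies in \(L^2_{\mathrm{loc}}\). If moreover \(\nabla\psi\in L^2\), the same domination gives \(\|P_r^E(\psi)\|_{L^2}\le C_n\|\nabla\psi\|_{L^2}\).

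The only delicate point is the singularity of \(\log r\) on the zero set of \(\psi\). It is handled by the factor \(\psi\) in both of the last two terms: the singular factor \(1/r\) is cancelled by \(|\psi|=r\). The extension by zero is only needed to define the field globally; it does not enter the estimate.
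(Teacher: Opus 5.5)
Your proof is correct and follows essentially the same route as the paper. The Kato inequality \(|dr|\le|\nabla\psi|\) on \(U_r\) controls the two singular terms, since the factor \(1/r\) is absorbed by \(|\psi|=r\), and yields \(|P_r^E(\psi)|\le C_n|\nabla\psi|\); the zero extension handles \(X\setminus U_r\). The paper also remarks that \(dr=0\) almost everywhere on the zero set, but, as your version shows, that fact is not needed for this lemma.
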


\begin{proof}
The function \(r=|\psi|\) is locally Lipschitz. On \(U_r\), for every vector field \(Y\),
\[
        dr(Y)=\frac{\operatorname{Re}\langle
        \nabla_Y^{\Sigma_gX\otimes E}\psi,\psi\rangle}{|\psi|}.
\]
Moreover, \(dr=0\) almost everywhere on \(X\setminus U_r\). Hence
\[
        |dr|\le |\nabla^{\Sigma_gX\otimes E}\psi|
        \qquad\text{a.e.}
\]
On \(U_r\), the singular terms in \(P_r^E(\psi)\) are controlled by \(dr\):
\[
        |Y(\log r)\psi|=|Y(r)|,
        \qquad
        |c(Y)c(\nabla\log r)\psi|\le |Y|\,|dr|.
\]
Therefore, for a dimensional constant \(C_n>0\),
\[
        |P_r^E(\psi)|
        \le
        C_n\bigl(|\nabla^{\Sigma_gX\otimes E}\psi|+|dr|\bigr)
        \le
        2C_n|\nabla^{\Sigma_gX\otimes E}\psi|
\]
almost everywhere on \(U_r\). On \(X\setminus U_r\), we have extended \(P_r^E(\psi)\) by zero, so the same bound holds trivially. The local and global \(L^2\)-claims follow.
\end{proof}

We shall use the following pointwise identity, which isolates the equality defect in the refined Kato inequality for Dirac operators. 
In the harmonic case, the Dirac term vanishes and the nonnegative quantity \(|P^E_{|\psi|}(\psi)|^2\) is precisely the gap in the sharp refined Kato inequality; see, for example, \cite[Theorem~3.1]{CalderbankGauduchonHerzlich}.
For the Kato inequality in the non-harmonic case, see \cite[Section~4.3, proof of Lemma~4.2]{Davaux2003}, and \cite[Section~3]{WangZhu2024}.

\begin{proposition}
\label{prop:refined-kato-identity}
Let \((X^n,g)\), \(n\ge2\), be a Riemannian spin manifold. 
Let \(E\to X\) be a Hermitian bundle with unitary connection. 
Let \(\psi\in C^\infty(X,\Sigma_gX\otimes E)\) and set \(r:=|\psi|\).
Then, pointwise on \(U_r\),
\begin{equation}
\label{eq:refined-kato-identity}
        |P_r^E(\psi)|^2
        =
        |\nabla^{\Sigma_gX\otimes E}\psi|^2
        -\frac n{n-1}|dr|^2
        +\frac2{n-1}\operatorname{Re}
        \langle D^E\psi,c(\nabla^g\log r)\psi\rangle.
\end{equation}
\end{proposition}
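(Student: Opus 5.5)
The identity is purely algebraic and pointwise, so I plan to expand the squared norm of \(P_r^E(\psi)\) in a local \(g\)-orthonormal frame \((e_a)\) at a point of \(U_r\). Write \(V:=\nabla^g\log r\), \(\alpha:=\frac{n}{n-1}\), \(\beta:=\frac1{n-1}\), and \(\nabla:=\nabla^{\Sigma_gX\otimes E}\). Then
\[
        |P_r^E(\psi)|^2=\sum_{a=1}^n\bigl|\nabla_{e_a}\psi-\alpha\, e_a(\log r)\psi-\beta\, c(e_a)c(V)\psi\bigr|^2 .
\]
I will expand this into three diagonal terms and three cross terms. The key inputs are two facts. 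First, on \(U_r\),
\[
        \operatorname{Re}\langle\nabla_Y\psi,\psi\rangle=\tfrac12 Y(r^2)=r\,Y(r).
\]
Second, the Clifford conventions \(c(v)^*=-c(v)\) and \(c(v)^2=-|v|^2\). I will also use \(|V|^2r^2=|dr|^2\) throughout.

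\textbf{Diagonal terms.}
\begin{itemize}
\item The first is \(|\nabla\psi|^2\).
\item The second is \(\alpha^2\sum_a e_a(\log r)^2r^2=\alpha^2|dr|^2\).
\item The third is \(\beta^2\sum_a|c(e_a)c(V)\psi|^2=n\beta^2|V|^2r^2=n\beta^2|dr|^2\), because each \(c(e_a)\) and \(c(V)/|V|\) acts isometrically.
\end{itemize}

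\textbf{Cross terms.}
\begin{itemize}
\item The \(\nabla\)-versus-\(\log r\) term is \(-2\alpha\sum_a e_a(\log r)\operatorname{Re}\langle\nabla_{e_a}\psi,\psi\rangle=-2\alpha|dr|^2\), by the first fact.
\item For the \(\nabla\)-versus-Clifford term, skew-adjointness gives \(\langle\nabla_{e_a}\psi,c(e_a)c(V)\psi\rangle=-\langle c(e_a)\nabla_{e_a}\psi,c(V)\psi\rangle\). Summing over \(a\) yields \(-\langle D^E\psi,c(V)\psi\rangle\), so this cross term contributes \(+2\beta\operatorname{Re}\langle D^E\psi,c(V)\psi\rangle\).
\item For the \(\log r\)-versus-Clifford term, summing \(e_a(\log r)c(e_a)=c(V)\) gives \(+2\alpha\beta\operatorname{Re}\langle\psi,c(V)c(V)\psi\rangle=-2\alpha\beta|dr|^2\).
\end{itemize}

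\textbf{Collecting coefficients.} The total coefficient of \(|dr|^2\) is
\[
        \alpha^2+n\beta^2-2\alpha-2\alpha\beta=\frac{n^2+n-2n(n-1)-2n}{(n-1)^2}=-\frac{n}{n-1}.
\]
The Dirac term carries \(2\beta=\frac{2}{n-1}\). Together with \(|\nabla\psi|^2\), this is exactly \eqref{eq:refined-kato-identity}.

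There is no real obstacle here. The only care needed is in the sign bookkeeping of the Clifford cross term, that is, using \(c(e_a)^*=-c(e_a)\) to produce the Dirac operator with the correct sign. The computation is valid only on \(U_r\), where \(\log r\) is smooth, which is exactly where the identity is asserted.
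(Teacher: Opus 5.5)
Your proposal is correct and is essentially the paper's proof. Both arguments expand \(\sum_a|\nabla_{e_a}\psi-B_a|^2\) pointwise in a frame on \(U_r\), using \(\operatorname{Re}\langle\nabla_Y\psi,\psi\rangle=r\,Y(r)\), skew-adjointness of Clifford multiplication to produce the Dirac term, and the Clifford relations for the quadratic terms. The paper first groups the two correction terms into \(B_i\) and gets \(\sum_i|B_i|^2=\frac{n}{n-1}|dr|^2\) in one step from \(c(e_i)c(V)+c(V)c(e_i)=-2a_i\). You instead expand all six terms and collect \(\alpha^2+n\beta^2-2\alpha\beta-2\alpha=-\frac{n}{n-1}\) at the end, which is only a bookkeeping difference.
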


\begin{proof}
The computation is pointwise. 
Fix \(x\in U_r\), and choose a local \(g\)-orthonormal frame \(e_1,\ldots,e_n\). 
All expressions below are evaluated at \(x\). Put
\[
        u:=\log r,
        \qquad
        V:=\nabla^g u,
        \qquad
        a_i:=e_i(u),
        \qquad
        A_i:=\nabla_{e_i}^{\Sigma_gX\otimes E}\psi.
\]
Define
\[
        B_i:=\frac n{n-1}a_i\psi
        +\frac1{n-1}c(e_i)c(V)\psi.
\]
Then \(P_r^E(\psi)(e_i)=A_i-B_i\). Using
\[
        c(e_i)c(V)+c(V)c(e_i)=-2a_i
\]
and the skew-Hermiticity of Clifford multiplication, one obtains
\[
        \sum_i |B_i|^2=\frac n{n-1}|dr|^2.
\]
Moreover,
\[
        \operatorname{Re}\langle A_i,\psi\rangle=r\,e_i(r)=r^2a_i,
\]
and
\[
        \sum_i\langle A_i,c(e_i)c(V)\psi\rangle
        =
        -\langle D^E\psi,c(V)\psi\rangle.
\]
Hence
\[
        \operatorname{Re}\sum_i\langle A_i,B_i\rangle
        =
        \frac n{n-1}|dr|^2
        -\frac1{n-1}\operatorname{Re}\langle D^E\psi,c(\nabla^g\log r)\psi\rangle.
\]
Substituting this into
\[
        |P_r^E(\psi)|^2=\sum_i|A_i-B_i|^2
\]
gives \eqref{eq:refined-kato-identity}.
\end{proof}

\begin{corollary}
\label{cor:harmonic-kato-identity}
Let \(D^E\psi=0\), set \(r=|\psi|\), and assume \(\psi,\nabla^{\Sigma_gX\otimes E}\psi\in L^2\).
Then
\begin{equation}
\label{eq:integrated-harmonic-kato}
        \int_X |P_r^E(\psi)|^2\dmu
        =
        \|\nabla^{\Sigma_gX\otimes E}\psi\|_{L^2}^2
        -\frac n{n-1}\int_X |dr|^2\dmu.
\end{equation}
In particular,
\begin{equation}
\label{eq:refined-kato-inequality-harmonic}
        \frac n{n-1}\int_X |dr|^2\dmu
        \le
        \|\nabla^{\Sigma_gX\otimes E}\psi\|_{L^2}^2.
\end{equation}
\end{corollary}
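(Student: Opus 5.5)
The plan is to integrate the pointwise identity of \Cref{prop:refined-kato-identity} over \(U_r\) and to handle the complement \(X\setminus U_r\) separately. With \(D^E\psi=0\), the Dirac term in \eqref{eq:refined-kato-identity} drops out, so on \(U_r\) we have
\[
        |P_r^E(\psi)|^2
        =
        |\nabla^{\Sigma_gX\otimes E}\psi|^2-\frac n{n-1}|dr|^2 .
\]
No integration by parts is needed. The identity is pointwise, so the argument reduces to verifying that the three integrands are integrable and that the contributions from the zero set vanish.

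First, integrability. By hypothesis \(\nabla^{\Sigma_gX\otimes E}\psi\in L^2\). The proof of \Cref{lem:kato-defect-L2} shows that \(r\) is locally Lipschitz with \(|dr|\le|\nabla^{\Sigma_gX\otimes E}\psi|\) almost everywhere, so \(dr\in L^2\). The same lemma gives \(P_r^E(\psi)\in L^2\). Hence all three terms are integrable over \(U_r\), and integrating the pointwise identity over \(U_r\) yields
\[
        \int_{U_r}|P_r^E(\psi)|^2\dmu
        =
        \int_{U_r}|\nabla^{\Sigma_gX\otimes E}\psi|^2\dmu
        -\frac n{n-1}\int_{U_r}|dr|^2\dmu .
\]

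Second, the zero set \(Z=X\setminus U_r\). This is the only point requiring care. By convention, \(P_r^E(\psi)\) is set to zero on \(Z\). Also \(dr=0\) almost everywhere on \(Z\), as recorded in the proof of \Cref{lem:kato-defect-L2}. For \(\nabla\psi\), we apply \cite[Lemma~7.7]{GilbargTrudinger} to the real and imaginary parts of the local components of \(\psi\) in a local orthonormal frame. Since \(\psi=0\) on \(Z\), each component's gradient vanishes almost everywhere there. The connection \(\nabla^{\Sigma_gX\otimes E}\psi\) is the sum of these derivatives and zeroth-order connection-form terms applied to \(\psi\), and the latter vanish on \(Z\). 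Therefore \(\nabla^{\Sigma_gX\otimes E}\psi=0\) almost everywhere on \(Z\).

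Consequently, every integral over \(U_r\) above equals the corresponding integral over \(X\), which is \eqref{eq:integrated-harmonic-kato}. The inequality \eqref{eq:refined-kato-inequality-harmonic} then follows immediately, because the left-hand side of \eqref{eq:integrated-harmonic-kato} is nonnegative.
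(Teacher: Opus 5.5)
Your proposal is correct and follows essentially the same route as the paper: drop the Dirac term in the pointwise identity of \Cref{prop:refined-kato-identity} on \(U_r\), use the zero extension of \(P_r^E(\psi)\) and the a.e.\ vanishing of \(\nabla^{\Sigma_gX\otimes E}\psi\) and \(dr\) on \(X\setminus U_r\), integrate, and read off the inequality from nonnegativity. Your added checks (integrability of each term, and the frame-component argument via \cite[Lemma~7.7]{GilbargTrudinger} for the zero set) just make explicit what the paper's short proof states without elaboration.
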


\begin{proof}
The pointwise identity \eqref{eq:refined-kato-identity} holds on \(U_r\). Since \(P_r^E(\psi)\) is extended by zero on \(X\setminus U_r\), and since \(\nabla\psi=0\) and \(dr=0\) almost everywhere on \(X\setminus U_r\), the identity holds almost everywhere on \(X\). The term involving \(D^E\psi\) vanishes. Integrating gives \eqref{eq:integrated-harmonic-kato}. The inequality follows because the left-hand side of \eqref{eq:integrated-harmonic-kato} is nonnegative.
\end{proof}

For almost harmonic spinors we shall use the following immediate consequence.

\begin{corollary}
\label{cor:almost-harmonic-kato-estimate}
Let \(\psi\in C^\infty(X,\Sigma_gX\otimes E)\) and set \(r=|\psi|\).
Assume that \(\psi,D^E\psi\in L^2\) and that
\(\nabla^{\Sigma_gX\otimes E}\psi\in L^2\). Then
\begin{equation}
\label{eq:almost-harmonic-kato-estimate}
        \|\nabla^{\Sigma_gX\otimes E}\psi\|_{L^2}^2
        \ge
        \frac n{n-1}\int_X|dr|^2\dmu
        -\frac2{n-1}\|D^E\psi\|_{L^2}\|dr\|_{L^2}.
\end{equation}
\end{corollary}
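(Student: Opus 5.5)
The plan is to integrate the pointwise identity \eqref{eq:refined-kato-identity} of \Cref{prop:refined-kato-identity} over \(X\), drop the nonnegative Kato defect, and bound the Dirac cross term by Cauchy--Schwarz. The key pointwise observation is that on \(U_r\)
\[
        |c(\nabla^g\log r)\psi|
        =|\nabla^g\log r|\,|\psi|
        =\frac{|dr|}{r}\,r
        =|dr|,
\]
because Clifford multiplication by a vector \(V\) acts as \(|V|\) times an isometry, since \(c(V)^*c(V)=|V|^2\). Hence, pointwise on \(U_r\),
\[
        \bigl|\operatorname{Re}\langle D^E\psi,c(\nabla^g\log r)\psi\rangle\bigr|
        \le |D^E\psi|\,|dr|.
\]

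Next I would extend the identity almost everywhere. On \(X\setminus U_r\) we have \(\nabla\psi=0\) and \(dr=0\) almost everywhere (as in the proof of \Cref{cor:harmonic-kato-identity}), \(P_r^E(\psi)\) is zero by convention, and the cross term is set to zero there. Then \eqref{eq:refined-kato-identity} holds almost everywhere on \(X\), and since \(|P_r^E(\psi)|^2\ge0\) it yields a.e.
\[
        |\nabla^{\Sigma_gX\otimes E}\psi|^2
        \ge
        \frac n{n-1}|dr|^2-\frac2{n-1}|D^E\psi|\,|dr|.
\]

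Finally I would integrate. The terms are integrable: \(\nabla\psi\in L^2\) by hypothesis, \(|dr|\le|\nabla\psi|\) a.e. (from the proof of \Cref{lem:kato-defect-L2}) gives \(dr\in L^2\), and \(D^E\psi\in L^2\), so the product \(|D^E\psi|\,|dr|\) is integrable. Applying Cauchy--Schwarz, \(\int|D^E\psi||dr|\le\|D^E\psi\|_{L^2}\|dr\|_{L^2}\), gives \eqref{eq:almost-harmonic-kato-estimate}.

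The only potential subtlety is the measure-theoretic handling of the zero set of \(\psi\). The bound on the cross term is otherwise immediate, and the zero-set issue is already settled by the conventions adopted before \Cref{lem:kato-defect-L2}. No integration by parts is needed, so completeness plays no role here.
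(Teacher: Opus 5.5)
Your proposal is correct and follows the paper's proof: integrate the pointwise identity \eqref{eq:refined-kato-identity} using the zero-set convention, observe that \(|c(\nabla^g\log r)\psi|=|dr|\) on \(U_r\), drop the nonnegative term \(|P_r^E(\psi)|^2\), and apply Cauchy--Schwarz. Your integrability check via \(|dr|\le|\nabla^{\Sigma_gX\otimes E}\psi|\) makes explicit a detail that the paper leaves implicit.
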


\begin{proof}
Integrate \eqref{eq:refined-kato-identity} over \(U_r\), use the convention on the zero set of \(r\), and observe that
\[
        |c(\nabla^g\log r)\psi|=|dr|
        \qquad\text{on }U_r.
\]
The conclusion follows from the nonnegativity of \(|P_r^E(\psi)|^2\) and the Cauchy--Schwarz inequality.
\end{proof}

\subsection{Sharp scalar upper bound}
\label{subsec:sharp-scalar-upper-bound}

\begin{proposition}
\label{prop:scalar-upper-bound}
Under the hypotheses of \Cref{thm:scalar-upper-bound}, inequality \eqref{eq:scalar-upper-bound} holds.
\end{proposition}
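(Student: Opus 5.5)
We combine the integrated Lichnerowicz formula, the almost-harmonic refined Kato estimate, and Rayleigh's inequality, applied to the lengths \(r_j:=|\psi_j|\). The goal is a chain of inequalities in which every error term is \(o(1)\) as \(j\to\infty\).

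\textbf{Step 1: integrability of \(r_j\).} Set \(\kappa:=\inf_M\scal_{g_M}\), so \(\scal_g\ge\kappa\) on \(X\). Write \(\varepsilon_j:=\|D_{s_j}\psi_j\|_{L^2}\to0\). By \eqref{eq:integrated-lichnerowicz}, \(\nabla^{s_j}\psi_j\in L^2\). Using \eqref{eq:Romega-bound} and \(\|\psi_j\|_{L^2}=1\), we get
\[
        \|\nabla^{s_j}\psi_j\|_{L^2}^2
        \le
        \varepsilon_j^2-\tfrac14\kappa+s_jC_\omega .
\]
This is uniformly bounded in \(j\). The proof of \Cref{lem:kato-defect-L2} gives \(|dr_j|\le|\nabla^{s_j}\psi_j|\) almost everywhere. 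Since \(r_j\) is locally Lipschitz with \(r_j,dr_j\in L^2\), we have \(r_j\in W^{1,2}(X)\). Completeness and the fact that \(C_c^\infty\) is a form core justify this. In particular \(\|dr_j\|_{L^2}\) is bounded uniformly in \(j\).

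\textbf{Step 2: the chain of inequalities.} Apply \Cref{cor:almost-harmonic-kato-estimate} with \(E=F\) and connection \(\nabla^{F,s_j}\). Then apply the Rayleigh inequality \eqref{eq:rayleigh-inequality} to \(u=r_j\), using \(\|r_j\|_{L^2}=1\). This gives
\[
        \|\nabla^{s_j}\psi_j\|_{L^2}^2
        \ge
        \tfrac n{n-1}\lambda_0-\tfrac2{n-1}\varepsilon_j\|dr_j\|_{L^2}.
\]
Insert this into \eqref{eq:integrated-lichnerowicz}, together with \(\int\scal_g|\psi_j|^2\ge\kappa\) and \(s_j\int\langle R_\omega\psi_j,\psi_j\rangle\ge -s_jC_\omega\). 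We obtain
\[
        \varepsilon_j^2
        \ge
        \tfrac n{n-1}\lambda_0+\tfrac14\kappa
        -\tfrac2{n-1}\varepsilon_j\|dr_j\|_{L^2}-s_jC_\omega .
\]

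\textbf{Step 3: passing to the limit.} Let \(j\to\infty\). By Step 1, \(\varepsilon_j\to0\) with \(\|dr_j\|_{L^2}\) bounded, and \(s_j\to0\). Hence
\[
        \tfrac14\kappa+\tfrac n{n-1}\lambda_0\le0,
\]
which is \eqref{eq:scalar-upper-bound}.

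\textbf{Main obstacle.} The delicate point is the regularity needed to use \(r_j\) as a test function and to integrate the Kato identity over \(U_{r_j}\). This means checking that \(r_j\in W^{1,2}\) with \(\|dr_j\|_{L^2}\le\|\nabla^{s_j}\psi_j\|_{L^2}\), and that the zero-set convention is consistent. Both follow from the Lipschitz bound already recorded in \Cref{lem:kato-defect-L2} and the form-domain description of \(\lambda_0\). Everything else is bookkeeping of \(o(1)\) terms.
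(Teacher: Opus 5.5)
Your proposal is correct and is essentially the paper's proof. In both, the integrated Lichnerowicz formula gives a uniform bound on \(\|\nabla^{s_j}\psi_j\|_{L^2}\), and hence on \(\|dr_j\|_{L^2}\). The almost-harmonic Kato estimate together with Rayleigh's inequality for \(r_j\) then bounds \(\|\nabla^{s_j}\psi_j\|_{L^2}^2\) below by \(\frac{n}{n-1}\lambda_0\) up to \(O(\varepsilon_j)\), and letting \(j\to\infty\) gives the inequality. The differences are only cosmetic: you keep the error term \(\frac{2}{n-1}\varepsilon_j\|dr_j\|_{L^2}\) explicit and spell out why \(r_j\in W^{1,2}(X)\), while the paper absorbs that term directly into \(C\tau_j\).
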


\begin{proof}
Put \(\tau_j:=\|D_{s_j}\psi_j\|_{L^2}\).  Thus \(\tau_j\to0\).
Put
\[
        \sigma_0:=\inf_M\scal_{g_M}=\inf_X\scal_g,
        \qquad
        r_j:=|\psi_j|,
        \qquad
        \alpha_n:=\frac n{n-1}.
\]
The integrated Lichnerowicz formula first gives a uniform bound for
\(\|\nabla^{s_j}\psi_j\|_{L^2}\), and hence for \(\|dr_j\|_{L^2}\).
Therefore, for some positive constant $C>0$,  \Cref{cor:almost-harmonic-kato-estimate} gives
\[
        \alpha_n\int_X|dr_j|^2\dmu
        \leq \|\nabla^{s_j}\psi_j\|_{L^2}^2+C\tau_j .
\]
Together with \(\int_Xr_j^2\dmu=1\), the integrated twisted
Lichnerowicz formula, \eqref{eq:Romega-bound}, and Rayleigh's inequality
give
\[
\begin{aligned}
        \tau_j^2
        =&
        \|\nabla^{s_j}\psi_j\|_{L^2}^2
        +\frac14\int_X\scal_g r_j^2\dmu
        +s_j\int_X\langle R_\omega\psi_j,\psi_j\rangle\dmu\\
        \ge&\alpha_n\int_X|dr_j|^2\dmu+\frac{\sigma_0}{4}
        - C_\omega s_j-C\tau_j\\
        \ge&\alpha_n\lambda_0+\frac{\sigma_0}{4}
        -C_\omega s_j-C\tau_j.
\end{aligned}
\]
Letting \(j\to\infty\), we get \(\sigma_0/4\le-\alpha_n\lambda_0\). 
Thus
\[
        \inf_M\scal_{g_M}=\sigma_0
        \le
        -4\alpha_n\lambda_0
        =
        -\frac{4n}{n-1}\lambda_0(X,g).\qedhere
\]
\end{proof}

\subsection{Generalized ground states}
\label{subsec:generalized-ground-states}

\begin{definition}
\label{def:generalized-ground-state}
A generalized ground state for \((X,g)\) is a positive smooth function \(f\in C^\infty(X)\) satisfying
\[
        \Delta_g f=\lambda_0 f.
\]
\end{definition}

\begin{proposition}
\label{prop:ground-state-existence}
Every complete connected Riemannian manifold admits a generalized ground state.
\end{proposition}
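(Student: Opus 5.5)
We will use the classical exhaustion argument (Fischer-Colbrie--Schoen, Cheng--Yau, Sullivan), combined with the Rayleigh characterization \eqref{eq:rayleigh-characterization}.

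\emph{Exhaustion and Dirichlet eigenfunctions.} Fix a base point \(o\in X\). Choose an exhaustion \(\Omega_1\Subset\Omega_2\Subset\cdots\) of \(X\) by relatively compact connected domains with smooth boundary containing \(o\). Let \(\lambda_1(\Omega_k)\) be the first Dirichlet eigenvalue of \(\Delta_g\) on \(\Omega_k\), with first eigenfunction \(f_k\). We take \(f_k\) positive in \(\Omega_k\), which is possible by connectedness and the strong maximum principle, and normalize it by \(f_k(o)=1\).

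By domain monotonicity, \(\lambda_1(\Omega_k)\) is nonincreasing in \(k\) and bounded below by \(\lambda_0\). Since \(C_c^\infty(X)=\bigcup_k C_c^\infty(\Omega_k)\), formula \eqref{eq:rayleigh-characterization} gives
\[
\lambda_1(\Omega_k)\downarrow\lambda_0 .
\]

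\emph{Compactness.} Fix a compact set \(K\subset X\), and consider \(k\) large enough that a neighbourhood of \(K\) lies in \(\Omega_k\). There the functions \(f_k\) are positive solutions of \(\Delta_g f_k=\lambda_1(\Omega_k)f_k\), with eigenvalues uniformly bounded. The Harnack inequality for positive solutions of uniformly elliptic equations, applied along chains of balls joining \(o\) to points of \(K\), gives
\[
\sup_K f_k\le C_K f_k(o)=C_K .
\]
Interior Schauder estimates then bound the \(C^{2,\alpha}(K)\) norms of the \(f_k\) uniformly in \(k\). A diagonal argument with Arzelà--Ascoli produces a subsequence converging in \(C^2_{\loc}\) to a function \(f\ge0\). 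This limit satisfies
\[
\Delta_g f=\lambda_0 f,\qquad f(o)=1 .
\]
It is smooth by elliptic regularity.

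\emph{Positivity.} The function \(f\) is nonnegative and solves a linear elliptic equation. Since \(\lambda_0\ge0\), we have \(\Delta_g f=\lambda_0 f\ge0\); with the sign convention \(\Delta_g=d^*d\) this says \(f\) is a nonnegative supersolution. The strong maximum principle for nonnegative supersolutions, or equivalently Harnack applied to \(f\), therefore shows that \(f\) either vanishes identically or is everywhere positive on the connected manifold \(X\). Since \(f(o)=1\), \(f>0\), so \(f\) is a generalized ground state.

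\emph{Main obstacle.} The crucial step is the uniform local bound. This requires a Harnack constant that is uniform in \(k\); it holds because the eigenvalues \(\lambda_1(\Omega_k)\) are bounded by \(\lambda_1(\Omega_1)\), and the geometry on \(K\) is fixed. Note that the argument never needs \(f\in L^2\), which in general fails.
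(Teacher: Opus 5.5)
Your proof is correct. It is more self-contained than the paper's, though the underlying mechanism is the same. The paper handles compact \(X\) separately (\(\lambda_0=0\), \(f\equiv1\)). In the noncompact case it observes that Rayleigh's inequality \eqref{eq:rayleigh-inequality} makes \(\int_X(|du|^2-\lambda_0u^2)\,d\mu_g\) nonnegative on \(C_c^\infty(X)\), and then quotes \cite[Theorem~1]{FischerColbrieSchoen1980} as a black box: nonnegativity of \(\Delta_g-\lambda_0\) on compact domains yields a positive solution of \(\Delta_g f=\lambda_0 f\). You instead prove this existence statement directly, via:
\begin{itemize}
\item positive first Dirichlet eigenfunctions on a connected exhaustion, with \(\lambda_1(\Omega_k)\downarrow\lambda_0\) from the Rayleigh characterization;
\item Harnack chains whose constants are uniform because \(\lambda_1(\Omega_k)\in[\lambda_0,\lambda_1(\Omega_1)]\);
\item interior Schauder estimates and Arzel\`a--Ascoli;
\item positivity of the limit from the minimum principle for the nonnegative superharmonic \(f\), or equivalently from the uniform lower Harnack bounds \(f_k\geq C_K^{-1}\) on \(K\), which pass to the limit.
\end{itemize}
The citation buys brevity. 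Your route buys independence from the cited theorem, and it runs the same Harnack-chain and Schauder machinery the paper uses later in \Cref{lem:recentered-ground-states}.

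Two small points. First, you do not single out the compact case, but your setup handles it: a nested open exhaustion of a compact \(X\) is eventually all of \(X\), so the boundary is empty, \(\lambda_1=0=\lambda_0\), and the eigenfunctions are constants. Second, for the Harnack step, \(\Omega_k\) must contain a neighbourhood of the whole chain of balls from \(o\) to \(K\), not just of \(K\). Since that chain is compact, this holds for all large \(k\).
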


\begin{proof}
If \(X\) is compact, then \(\lambda_0=0\) and the conclusion holds with
\(f\equiv1\); hence assume that \(X\) is noncompact.
By Rayleigh's inequality \eqref{eq:rayleigh-inequality},
\[
        \int_X(|du|^2-\lambda_0u^2)\dmu\ge0,
        \qquad
        u\in C_c^\infty(X).
\]
The theorem of Fischer-Colbrie--Schoen \cite[Theorem~1]{FischerColbrieSchoen1980} yields a smooth positive solution of \(\Delta_g f=\lambda_0f\).
\end{proof}

We now study the interaction of \(f\) with functions in \(W^{1,2}(X)\).
The identity below is the classical ground-state transform (also called the
Jacobi identity); see
\cite[Theorem~2.3 and Corollary~2.9]{LenzStollmannVeselic2009} for a general
strongly local Dirichlet-form formulation. We include the proof in order to
record explicitly the extension from compactly supported smooth functions to
all of \(W^{1,2}(X)\).

\begin{lemma}
\label{lem:ground-state-transform}
Let \(f>0\) be a smooth function satisfying \(\Delta_g f=\lambda_0f\). Then,
for every \(v\in W^{1,2}(X)\), the function \(v/f\) belongs to
\(W^{1,2}_\loc(X)\), the weighted differential
\(f\,d(v/f)\) belongs to \(L^2(X,T^*X)\), and
\begin{equation}
\label{eq:ground-state-transform}
        \int_X (|dv|^2-\lambda_0v^2)\,d\mu_g
        =
        \int_X f^2\left|d\left(\frac vf\right)\right|^2\,d\mu_g.
\end{equation}
\end{lemma}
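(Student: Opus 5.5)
The plan is to prove \eqref{eq:ground-state-transform} first for \(v\in C_c^\infty(X)\) by a pointwise computation, and then to extend to all of \(W^{1,2}(X)\) by density, using that \(C_c^\infty(X)\) is dense in \(W^{1,2}(X)\) because \(X\) is complete.

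For the compactly supported case, fix \(v\in C_c^\infty(X)\) and write \(w:=v/f\), which is again smooth with compact support since \(f>0\) is smooth. From \(dv=f\,dw+w\,df\) we obtain, pointwise,
\[
        |dv|^2=f^2|dw|^2+2fw\langle dw,df\rangle+w^2|df|^2
        =f^2|dw|^2+\tfrac12\langle d(w^2),d(f^2)\rangle\cdot\tfrac{1}{1}+w^2|df|^2-\tfrac12\langle d(w^2),d(f^2)\rangle+2fw\langle dw,df\rangle ,
\]
so that more simply \(|dv|^2=f^2|dw|^2+\langle d(w^2),f\,df\rangle+w^2|df|^2\). We then integrate by parts the middle term, which is legitimate since \(w^2\) has compact support:
\[
        \int_X\langle d(w^2),f\,df\rangle\dmu=\int_X w^2\,d^*(f\,df)\dmu
        =\int_X w^2\bigl(f\Delta_gf-|df|^2\bigr)\dmu .
\]
Using \(\Delta_gf=\lambda_0f\), the two \(w^2|df|^2\) terms cancel and the remaining term is \(\lambda_0 w^2f^2=\lambda_0v^2\). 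This gives \eqref{eq:ground-state-transform} for \(v\in C_c^\infty(X)\).

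For the extension, let \(v\in W^{1,2}(X)\) and choose \(v_k\in C_c^\infty(X)\) with \(v_k\to v\) in \(W^{1,2}\). Since \(f\) and \(1/f\) are smooth and locally bounded together with \(df\), we get \(v_k/f\to v/f\) in \(W^{1,2}_{\loc}\), which shows \(v/f\in W^{1,2}_\loc(X)\) and \(d(v_k/f)\to d(v/f)\) in \(L^2_\loc\). The identity applied to \(v_k-v_l\) gives
\[
        \int_X f^2\Bigl|d\Bigl(\frac{v_k-v_l}{f}\Bigr)\Bigr|^2\dmu
        =\int_X\bigl(|d(v_k-v_l)|^2-\lambda_0(v_k-v_l)^2\bigr)\dmu\le\|v_k-v_l\|_{W^{1,2}}^2,
\]
so the sequence \(f\,d(v_k/f)\) is Cauchy in \(L^2(X,T^*X)\). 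Its \(L^2\)-limit must agree with the \(L^2_\loc\)-limit \(f\,d(v/f)\), which gives \(f\,d(v/f)\in L^2\). Passing to the limit on both sides of the identity for \(v_k\) then yields \eqref{eq:ground-state-transform}, because the left side converges by \(W^{1,2}\)-convergence and the right side by \(L^2\)-convergence.

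The main point requiring care is this extension step. We must identify the \(L^2\) limit with the weak differential of \(v/f\), which is handled by local convergence as above. We must also keep the sign conventions straight: \(\Delta_g=d^*d\) is nonnegative and \(d^*(f\,df)=f\Delta_gf-|df|^2\). The algebraic part of the argument is routine.
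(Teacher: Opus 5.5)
Your proposal is correct and follows essentially the same route as the paper. For \(v\in C_c^\infty(X)\) you expand \(|d(fw)|^2\) pointwise and integrate by parts once against \(\Delta_g f=\lambda_0 f\); you then extend by density in \(W^{1,2}(X)\), applying the identity to \(v_k-v_l\) to make \(f\,d(v_k/f)\) Cauchy in \(L^2\). The only cosmetic difference is that you get \(v/f\in W^{1,2}_\loc(X)\) directly from the local product rule and then identify the \(L^2\)-limit with \(f\,d(v/f)\), whereas the paper reads off the weak derivative from the \(L^2\)-limit itself; your first displayed line for \(|dv|^2\) is a garbled but harmless detour.
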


\begin{proof}
First assume \(v\in C_c^\infty(X)\), and put \(h=v/f\). Then
\[
        d(fh)=h\,df+f\,dh
\]
and therefore
\[
        |d(fh)|^2=h^2|df|^2+f^2|dh|^2+2fh\langle df,dh\rangle.
\]
Testing \(\Delta_g f=\lambda_0f\) against \(fh^2\) gives
\[
        \lambda_0\int_X f^2h^2\,d\mu_g
        =
        \int_X h^2|df|^2+2fh\langle df,dh\rangle\,d\mu_g.
\]
Combining the previous two identities gives
\begin{equation}\label{eq:ground-transform-1}
\int_X (|dv|^2-\lambda_0v^2)\,d\mu_g
=\int_X f^2|dh|^2\,d\mu_g
=\int_X f^2\left|d\left(\frac vf\right)\right|^2\,d\mu_g.
\end{equation}

Now let \(v\in W^{1,2}(X)\). Since \((X,g)\) is complete, choose \(v_k\in C_c^\infty(X)\) with \(v_k\to v\) in \(W^{1,2}(X)\). 
Put \(h_k:=v_k/f\). 
Applying the identity \eqref{eq:ground-transform-1} to \(v_k-v_\ell=f(h_k-h_\ell)\), we get
\begin{equation}\label{eq:ground-transform-2}
\int_X f^2|d(h_k-h_\ell)|^2\,d\mu_g
=\int_X\bigl(|d(v_k-v_\ell)|^2-\lambda_0(v_k-v_\ell)^2\bigr)\,d\mu_g.
\end{equation}
The right-hand side tends to zero as \((v_k)\) is Cauchy in \(W^{1,2}(X)\). 
Hence \((f\,dh_k)\) is Cauchy in \(L^2(X,T^*X)\). Let \(\beta\) be its \(L^2\)-limit.

Let \(\Omega\subset X\) be an open relatively compact set.
Then \(f^{-1}\) is bounded on \(\Omega\). Thus
\[
        h_k\to \frac vf \quad\text{in }L^2(\Omega),
        \qquad
        dh_k\to f^{-1}\beta \quad\text{in }L^2(\Omega,T^*X).
\]
Since \(\Omega\) was arbitrary, we conclude that \(v/f\in W^{1,2}_\loc(X)\) and \(d(v/f)=f^{-1}\beta\) almost everywhere on \(X\).
In particular, \(f\,d\left(v/f\right)\in L^2(X,T^*X)\).
Applying the identity \eqref{eq:ground-transform-1} to \(v_k\), we get
\[
\int_X\bigl(|dv_k|^2-\lambda_0v_k^2\bigr)\,d\mu_g
=\int_X f^2|dh_k|^2\,d\mu_g
=\int_X |f\,dh_k|^2\,d\mu_g .
\]
Since \(v_k\to v\) in \(W^{1,2}(X)\) and \(f\,dh_k\to\beta=f\,d\left(v/f\right)\) in \(L^2(X,T^*X)\), passing to the limit in the previous identity yields \eqref{eq:ground-state-transform}.
\end{proof}

\subsection{A Wang--Zhu-type positivity principle}
\label{subsec:positivity-principle}

The following proposition is the form of the Wang--Zhu positivity principle needed for scalar rigidity. 
Wang--Zhu prove a considerably more general quantitative estimate for low-energy sections on normal covers \cite[Theorem~5.1]{WangZhu2024}. 
In the specific case needed here, we instead give a direct proof using the
ground-state transform from \Cref{lem:ground-state-transform} and the
Poincar\'e inequality.

\begin{proposition}
\label{prop:wang-zhu-positivity}
Let \(a>0\), let \(V\in C^\infty(M,\R)\), and write \(\widetilde V:=\pi^*V\). Assume that
\[
        a\lambda_0(X,g)+V\ge0\quad\text{on }M,
\]
and that \(a\lambda_0(X,g)+V\) is positive at some point of \(M\). Then there exists \(\kappa>0\) such that
\begin{equation}
\label{eq:wang-zhu-positivity}
        a\int_X |du|^2\dmu
        +
        \int_X \widetilde V |u|^2\dmu
        \ge
        \kappa\int_X |u|^2\dmu
\end{equation}
for every \(u\in W^{1,2}(X)\).
\end{proposition}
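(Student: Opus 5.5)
The plan is to prove \eqref{eq:wang-zhu-positivity} directly, with an explicit constant, by splitting the quadratic form with the ground-state transform and then controlling \(\int_X u^2\) through a Poincar\'e inequality that is uniform over deck translates of a fixed compact set.

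Fix a generalized ground state \(f\) from \Cref{prop:ground-state-existence}, and set \(W:=a\lambda_0+\widetilde V\ge0\) on \(X\). For \(u\in W^{1,2}(X)\), write \(h:=u/f\in W^{1,2}_\loc(X)\). By \Cref{lem:ground-state-transform},
\[
        a\int_X|du|^2\dmu+\int_X\widetilde V u^2\dmu
        =a\int_X f^2|dh|^2\dmu+\int_X W u^2\dmu .
\]
Both terms on the right are nonnegative. By hypothesis there are a point \(x_0\in X\), a small geodesic ball \(B\ni x_0\) and a constant \(c>0\) with \(W\ge c\) on \(B\). Since \(W\) is \(\Gamma\)-invariant, \(W\ge c\) on every translate \(\gamma B\). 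It therefore suffices to show
\[
        \int_X u^2\dmu\le C\Bigl(\int_X f^2|dh|^2\dmu+\sum_{\gamma\in\Gamma}\int_{\gamma B}u^2\dmu\Bigr),
\]
and to note that \(\sum_\gamma\int_{\gamma B}u^2\le N_B\int_X\mathbf 1_{\Gamma B}u^2\) by bounded overlap of the translates.

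To obtain this, choose a compact connected domain \(K\subset X\) with smooth boundary that contains \(B\) and the closure of a fundamental domain. Then \(\bigcup_\gamma\gamma K=X\), and by proper discontinuity the cover \(\{\gamma K\}\) has multiplicity at most some \(N<\infty\). On \(K\), the standard compactness argument for a connected domain and a subset of positive measure gives
\[
        \int_K h^2\le C_P\Bigl(\int_K|dh|^2+\int_B h^2\Bigr)
\]
for \(h\in W^{1,2}(K)\). Because deck transformations are isometries, the same constant \(C_P\) works on every \(\gamma K\) with \(\gamma B\).

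Next I would compare \(f\) with a constant on each translate. The function \(f\) is not \(\Gamma\)-invariant, but it is a positive solution of \(\Delta_g f=\lambda_0 f\), and the local geometry near \(\gamma K\) is isometric to that near \(K\). The Harnack inequality applied on a slightly larger connected compact neighbourhood therefore gives
\[
        \sup_{\gamma K}f\le H\inf_{\gamma K}f ,
\]
with \(H\) independent of \(\gamma\). Writing \(c_\gamma:=f(\gamma x_0)\), we get \(H^{-1}c_\gamma\le f\le Hc_\gamma\) on \(\gamma K\), and hence
\[
        \int_{\gamma K}u^2\le H^2c_\gamma^2\int_{\gamma K}h^2
        \le H^2C_P\Bigl(c_\gamma^2\int_{\gamma K}|dh|^2+c_\gamma^2\int_{\gamma B}h^2\Bigr)
        \le H^4C_P\Bigl(\int_{\gamma K}f^2|dh|^2+\int_{\gamma B}u^2\Bigr).
\]
Summing over \(\gamma\) and using multiplicity at most \(N\) for both families of sets yields
\[
        \int_X u^2\le NH^4C_P\Bigl(\int_Xf^2|dh|^2+\int_{\Gamma B}u^2\Bigr)
        \le NH^4C_P\max(a^{-1},c^{-1})\Bigl(a\int_Xf^2|dh|^2+\int_XWu^2\Bigr).
\]
This gives \eqref{eq:wang-zhu-positivity} with \(\kappa^{-1}=NH^4C_P\max(a^{-1},c^{-1})\).

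I expect the main obstacle to be the uniform Harnack step. One must check that the Harnack constant for \(\Delta_g-\lambda_0\) on the translates depends only on the \(\Gamma\)-invariant local geometry, which follows since each \(\gamma\) is an isometry carrying the equation to itself. Some care is also needed that \(h\in W^{1,2}(\gamma K)\) for each \(\gamma\); this follows from \Cref{lem:ground-state-transform}, since \(f\) is bounded above and below on the compact set \(\gamma K\). The local Poincar\'e inequality with the term on \(B\) is routine.
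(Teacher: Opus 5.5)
Your proposal is correct and follows essentially the paper's argument: the ground-state transform, a Poincar\'e inequality on a compact connected domain with control on a set where \(a\lambda_0+\widetilde V\) is positive, a Harnack constant that is uniform over deck translates, and bounded overlap of the translates. The only differences are cosmetic. You work directly with \(u\in W^{1,2}(X)\), using the \(W^{1,2}\)-version of the ground-state transform, whereas the paper proves the estimate for \(C_c^\infty(X)\) and extends by density. Your Harnack bookkeeping through the value \(f(\gamma x_0)\) gives a factor \(H^4\) where the paper gets \(C_H^2\).
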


\begin{lemma}
\label{lem:ground-state-poincare}
Let \(U_0\subset M\) be a nonempty open set and set \(U:=\pi^{-1}(U_0)\). Let \(f>0\) be a generalized ground state. Then there exists a constant \(C_U>0\) such that, for every \(u\in C_c^\infty(X)\),
\begin{equation}
\label{eq:ground-state-poincare}
        \int_X |u|^2\dmu
        \le
        C_U\left(
        \int_X f^2\left|d\left(\frac uf\right)\right|^2\dmu
        +
        \int_U |u|^2\dmu
        \right).
\end{equation}
\end{lemma}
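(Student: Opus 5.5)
The plan is to work with the ground-state quotient \(h:=u/f\) and to use the \(\Gamma\)-invariant geometry to cover \(X\) by translates of a single compact piece. First fix a compact connected fundamental region. Choose a nonempty open ball \(B_0\Subset U_0\) small enough that it lifts isometrically. Choose a relatively compact connected open set \(W\subset X\) with smooth boundary such that the translates \(\gamma W\), \(\gamma\in\Gamma\), cover \(X\) with uniformly bounded overlap multiplicity \(N\). Enlarging \(W\) if necessary, we may assume \(W\) contains a lift \(B\) of \(B_0\), so that \(\gamma B\subset U\cap\gamma W\) for every \(\gamma\).

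The key step is a single local Poincaré-type estimate. I expect it to be
\begin{equation*}
\int_{W}u^2\,d\mu_g\le C\Bigl(\int_{W} f^2|d(u/f)|^2\,d\mu_g+\int_{B}u^2\,d\mu_g\Bigr),
\end{equation*}
to hold for \(u\in C^\infty(\overline W)\). On \(W\), the ground state satisfies \(0<c_1\le f\le c_2\). With \(h=u/f\), the estimate therefore reduces, up to constants depending on \(c_1,c_2\), to
\begin{equation*}
\int_W h^2\le C'\Bigl(\int_W|dh|^2+\int_B h^2\Bigr).
\end{equation*}
This is the standard Poincaré inequality on the connected Lipschitz domain \(W\), which follows from Rellich compactness and a contradiction argument: a function with \(dh=0\) on the connected set \(W\) and \(h=0\) on \(B\) vanishes.

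The difficulty is uniformity over translates, since \(f\) is \emph{not} \(\Gamma\)-invariant. This is why the estimate is phrased in terms of \(f^2|d(u/f)|^2\), which I expect to handle the issue. For \(\gamma\in\Gamma\), the function \(f_\gamma:=f\circ\gamma\) is again a generalized ground state. The key step applied to \(f_\gamma\) on \(W\) would require constants uniform in \(\gamma\). To get them, normalize \(\tilde f_\gamma:=f_\gamma/f_\gamma(x_0)\). The expression \(f^2|d(u/f)|^2\), and hence the inequality, is invariant under multiplying \(f\) by a positive constant. By the Harnack inequality for \(\Delta_g-\lambda_0\), whose constants are uniform on \(X\) because \(X\) has bounded geometry as a cover of a closed manifold, we get \(c_1\le\tilde f_\gamma\le c_2\) on \(\overline W\) uniformly in \(\gamma\). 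However, the Poincaré constant from the previous paragraph depends on \(\tilde f_\gamma\) only through these bounds, because the reduction to \(h\) used only \(c_1\le f\le c_2\). Hence a single \(C\) works for all \(\gamma\).

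Finally, transport the estimate to \(\gamma W\) by isometry and sum over \(\gamma\in\Gamma\). The left-hand sides sum to at least \(\int_X u^2\), since the translates cover \(X\). The right-hand sides sum to at most \(N\bigl(\int_X f^2|d(u/f)|^2+\int_U u^2\bigr)\), by the bounded overlap multiplicity and \(\gamma B\subset U\). This gives \eqref{eq:ground-state-poincare} with \(C_U=NC\).

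The main obstacle is the uniform Harnack control of the translated ground states. I would handle it via the invariance of the weighted energy under scaling of \(f\), as described above.
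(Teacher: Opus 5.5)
Your proposal is correct and follows essentially the same proof as the paper. You apply a Poincar\'e inequality with control on an open subset to $h=u/f$ on a fixed compact piece, use Harnack to control $\sup f/\inf f$ uniformly over all deck translates, and then sum using bounded overlap. Your normalization of $f\circ\gamma$, combined with the scale invariance of $f^2|d(u/f)|^2$, is just a rephrasing of the paper's uniform bound $\sup_{\gamma\Omega}f\le C_H\inf_{\gamma\Omega}f$.
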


\begin{proof}
Let \(\Gamma\) be the deck group. Choose \(x_*\in U\). Since \(M\) is compact, there exists a relatively compact connected open set \(\Omega\Subset X\), with smooth boundary, such that
\begin{equation*}
        x_*\in\Omega,
        \qquad
        X=\bigcup_{\gamma\in\Gamma}\gamma\Omega.
\end{equation*}
Set \(O:=\Omega\cap U\). Then \(O\) is a nonempty open subset of \(\Omega\). By the Poincar\'e inequality with control on a nonempty open subset, there exists \(C_P>0\) such that
\begin{equation}\label{eq:Poincare'}
        \int_\Omega |\phi|^2\dmu
        \le
        C_P\left(\int_\Omega |d\phi|^2\dmu+
        \int_O |\phi|^2\dmu\right)
\end{equation}
for all \(\phi\in W^{1,2}(\Omega)\). By deck invariance, the same estimate holds on each translate \(\gamma\Omega\), with the same constant.

Since \(f\) is a positive solution of \((\Delta_g-\lambda_0)f=0\), the local Harnack inequality, applied on a finite cover of \(\Omega\) and translated by deck transformations, gives a constant \(C_H\ge1\) such that
\begin{equation}\label{eq:Harnack}
        \sup_{\gamma\Omega}f
        \le
        C_H\inf_{\gamma\Omega}f
        \qquad\text{for every }\gamma\in\Gamma.
\end{equation}
Let \(u\in C_c^\infty(X)\), and put \(h=u/f\). Applying the inequality \eqref{eq:Poincare'} to \(h\) on \(\gamma\Omega\), multiplying by \(\sup_{\gamma\Omega}f^2\), and using \eqref{eq:Harnack}, we obtain
\begin{equation}\label{eq:Poincare'+Harnack}
        \int_{\gamma\Omega}|u|^2\dmu
        \le
        C_PC_H^2\left(
        \int_{\gamma\Omega}f^2|dh|^2\dmu+
        \int_{\gamma O}|u|^2\dmu
        \right).
\end{equation}
The family \(\{\gamma\Omega\}_{\gamma\in\Gamma}\) has uniformly bounded overlap. 
Hence there exists \(N_\Omega<\infty\) such that, for every nonnegative measurable function \(G\),
\[
        \sum_{\gamma\in\Gamma}\int_{\gamma\Omega}G\,d\mu_g
        \leq
        N_\Omega\int_X G\,d\mu_g .
\]
The same estimate holds with \(\Omega\) replaced by \(O\subset \Omega\).
Summing the inequality \eqref{eq:Poincare'+Harnack} over \(\gamma\in\Gamma\), and using \(\gamma O\subset U\) gives \eqref{eq:ground-state-poincare} with \(C_U=C_PC_H^2N_\Omega\).
\end{proof}

\begin{proof}[Proof of \Cref{prop:wang-zhu-positivity}]
Set
\[
        W:=a\lambda_0+V,
        \qquad
        \widetilde W:=\pi^*W=a\lambda_0+\widetilde V.
\]
By assumption, \(W\ge0\) on \(M\), and \(W\) is positive somewhere. Hence there exist a nonempty open set \(U_0\subset M\) and a number \(\delta>0\) such that \(W\ge\delta\) on \(U_0\). Let \(U:=\pi^{-1}(U_0)\), and choose a generalized ground state \(f>0\).

For \(u\in C_c^\infty(X)\), combining \Cref{lem:ground-state-transform} with \Cref{lem:ground-state-poincare}, we get
\[
        \int_X|u|^2\dmu
        \le
        C_U\left(
        \int_X(|du|^2-\lambda_0u^2)\dmu+
        \int_U |u|^2\dmu
        \right).
\]
Therefore
\begin{align*}
        a\int_X|du|^2\dmu+
        \int_X\widetilde V|u|^2\dmu
        &=
        a\int_X(|du|^2-\lambda_0u^2)\dmu+
        \int_X\widetilde W|u|^2\dmu \\
        &\ge
        a\int_X(|du|^2-\lambda_0u^2)\dmu+
        \delta\int_U|u|^2\dmu \\
        &\ge
        \min\{a,\delta\}
        \left(
        \int_X(|du|^2-\lambda_0u^2)\dmu+
        \int_U|u|^2\dmu
        \right) \\
        &\ge
        \frac{\min\{a,\delta\}}{C_U}
        \int_X|u|^2\dmu.
\end{align*}
This proves \eqref{eq:wang-zhu-positivity} for compactly supported smooth \(u\). Since \(X\) is complete, \(C_c^\infty(X)\) is dense in \(W^{1,2}(X)\), and since \(\widetilde V\in L^\infty(X)\), the estimate extends to all \(u\in W^{1,2}(X)\).
\end{proof}

\subsection{Scalar rigidity}
\label{subsec:scalar-rigidity-proof}

\begin{proposition}
\label{prop:scalar-rigidity}
Assume the hypotheses of \Cref{thm:scalar-upper-bound}. Suppose equality holds in \eqref{eq:scalar-upper-bound}. Then
\[
        \scal_{g_M}\equiv -\frac{4n}{n-1}\lambda_0(X,g).
\]
\end{proposition}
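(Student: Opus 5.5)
We argue by contradiction, using the Wang--Zhu-type positivity principle (\Cref{prop:wang-zhu-positivity}) applied to the lengths \(r_j=|\psi_j|\). Assume equality in \eqref{eq:scalar-upper-bound}, so \(\inf_M\scal_{g_M}=-4\alpha_n\lambda_0\) with \(\alpha_n=n/(n-1)\), but suppose \(\scal_{g_M}\) is not constant. Set
\[
        a:=\alpha_n,
        \qquad
        V:=\tfrac14\scal_{g_M}.
\]
Then
\[
        a\lambda_0+V=\tfrac14\bigl(\scal_{g_M}+4\alpha_n\lambda_0\bigr)\ge0
\]
on \(M\). Since \(\scal_{g_M}\) is nonconstant, its value somewhere exceeds its infimum, so \(a\lambda_0+V\) is positive at some point. \Cref{prop:wang-zhu-positivity} then gives \(\kappa>0\) with
\[
        \alpha_n\int_X|du|^2\dmu+\frac14\int_X\scal_g u^2\dmu\ge\kappa\int_X u^2\dmu
        \qquad\text{for all }u\in W^{1,2}(X).
\]

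Next we check that \(u=r_j\) is admissible. The function \(r_j=|\psi_j|\) is locally Lipschitz with \(|dr_j|\le|\nabla^{s_j}\psi_j|\) almost everywhere (proof of \Cref{lem:kato-defect-L2}). The integrated Lichnerowicz formula \eqref{eq:integrated-lichnerowicz} gives \(\nabla^{s_j}\psi_j\in L^2\), so \(r_j\in W^{1,2}(X)\). Its \(W^{1,2}\)-norm is bounded uniformly in \(j\), by the same argument as in the proof of \Cref{prop:scalar-upper-bound}: \(\scal_g\) and \(R_\omega\) are bounded, and \(\tau_j=\|D_{s_j}\psi_j\|_{L^2}\to0\).

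Now we repeat the chain of inequalities from \Cref{prop:scalar-upper-bound}, but stop before applying Rayleigh's inequality. Using \eqref{eq:integrated-lichnerowicz}, \Cref{cor:almost-harmonic-kato-estimate}, \eqref{eq:Romega-bound}, and \(\|r_j\|_{L^2}=1\), we get
\[
        \tau_j^2\ge\alpha_n\int_X|dr_j|^2\dmu+\frac14\int_X\scal_g r_j^2\dmu-C_\omega s_j-C\tau_j .
\]
The first two terms on the right are exactly the quadratic form in the positivity estimate. Hence
\[
        \tau_j^2\ge\kappa-C_\omega s_j-C\tau_j .
\]
Letting \(j\to\infty\) gives \(0\ge\kappa>0\), a contradiction. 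Therefore \(\scal_{g_M}\) is constant, and it equals its infimum \(-4n\lambda_0/(n-1)\).

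We do not expect a genuine obstacle, since the positivity principle has already been established. The main point needing care is the admissibility of \(r_j\) in \Cref{prop:wang-zhu-positivity}: we must confirm that \(r_j\in W^{1,2}(X)\) globally, not only locally, and that the constant \(C\) from the Kato estimate is uniform in \(j\). Both follow from the uniform \(L^2\)-bound on \(\nabla^{s_j}\psi_j\) supplied by the integrated Lichnerowicz formula.
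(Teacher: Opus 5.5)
Your proposal is correct and follows essentially the same route as the paper: assume $\scal_{g_M}$ is nonconstant, apply \Cref{prop:wang-zhu-positivity} with $a=\alpha_n$ and $V=\tfrac14\scal_{g_M}$ to $u=r_j$, and combine it with the integrated Lichnerowicz formula and \Cref{cor:almost-harmonic-kato-estimate} to obtain $\tau_j^2\ge\kappa-C_\omega s_j-C\tau_j$, which is impossible as $j\to\infty$. Your explicit check that $r_j\in W^{1,2}(X)$, with norm bounded uniformly in $j$, is a detail the paper leaves implicit.
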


\begin{proof}
After normalizing the spinors, set \(r_j:=|\psi_j|\) and \(\alpha_n:=n/(n-1)\). By equality in \eqref{eq:scalar-upper-bound},
\[
        \inf_M\scal_{g_M}=-4\alpha_n\lambda_0.
\]
Suppose, by contradiction, that \(\scal_{g_M}\) is not constant. Then
\[
        \frac14\scal_{g_M}+\alpha_n\lambda_0\ge0
\]
on \(M\), and this function is positive at some point. Applying \Cref{prop:wang-zhu-positivity} with \(a=\alpha_n\) and \(V=\frac14\scal_{g_M}\), we obtain \(c_0>0\) such that, for every \(u\in W^{1,2}(X)\),
\begin{equation*}
        \alpha_n\int_X|du|^2\dmu
        +\frac14\int_X\scal_g u^2\dmu
        \ge
        c_0\int_Xu^2\dmu.
\end{equation*}
Applying this to \(u=r_j\) and using \(\|r_j\|_{L^2}=1\) gives
\begin{equation}
\label{eq:scalar-rigidity-positive-on-rj}
        \alpha_n\int_X|dr_j|^2\dmu
        +\frac14\int_X\scal_g r_j^2\dmu
        \ge
        c_0.
\end{equation}
Put \(\tau_j:=\|D_{s_j}\psi_j\|_{L^2}\).  As in the proof of
\Cref{prop:scalar-upper-bound}, the covariant derivatives and the differentials
\(dr_j\) are uniformly bounded in \(L^2\), and
\Cref{cor:almost-harmonic-kato-estimate} gives
\[
        \|\nabla^{s_j}\psi_j\|_{L^2}^2
        \geq \alpha_n\int_X|dr_j|^2\dmu-C\tau_j.
\]
Using this estimate, \eqref{eq:integrated-lichnerowicz},
\eqref{eq:Romega-bound}, and \eqref{eq:scalar-rigidity-positive-on-rj}, we obtain
\[
\begin{aligned}
        \tau_j^2&=
        \|\nabla^{s_j}\psi_j\|_{L^2}^2
        +\frac14\int_X\scal_g r_j^2\dmu
        +s_j\int_X\langle R_\omega\psi_j,\psi_j\rangle\dmu\\
        &\geq\alpha_n\int_X|dr_j|^2\dmu
        +\frac14\int_X\scal_g r_j^2\dmu
        -C_\omega s_j-C\tau_j\\
        &\geq c_0-C_\omega s_j-C\tau_j.
\end{aligned}
\]
Letting \(j\to\infty\) gives a contradiction. 
Therefore \(\scal_{g_M}\) is constant, and the equality assumption determines the constant.
\end{proof}

\begin{proof}[Proof of \Cref{thm:scalar-upper-bound}]
The scalar upper bound is \Cref{prop:scalar-upper-bound}. The equality case is \Cref{prop:scalar-rigidity}.
\end{proof}

\section{Geometric rigidity}
\label{sec:einstein-rigidity}

In this section we prove the analytic rigidity statement \Cref{thm:intro-analytic-einstein}, which upgrades scalar rigidity to Einstein rigidity. 
Once the scalar curvature has attained the sharp constant value, the task is to understand the equality case in the Rayleigh--Kato--Lichnerowicz inequalities. 
The key point is that the Kato defect is also the defect in the conformal spinor transformation formula.

We first show that the equality defects give small conformal Kato defect with respect to a fixed generalized ground state. 
We then recenter the harmonic twisted spinors by deck transformations and pass to a conformal limit carrying a nonzero parallel spinor. 
This makes the limiting conformal metric Ricci-flat, and a conformal Einstein criterion then yields the Einstein equation for the original metric.

Throughout this section, \((M^n,g_M)\), \(n\ge3\), is a closed connected spin
Riemannian manifold, and
\[
        \pi:(X,g)\longrightarrow(M,g_M)
\]
is its universal Riemannian covering. Let \(\omega\in\Omega^2(M)\) be a
closed two-form whose lift is exact,
\[
        \pi^*\omega=d\eta
\]
for some real one-form \(\eta\in\Omega^1(X)\). As in
\Cref{subsec:twisted-dirac}, \(F=X\times\C\) is equipped with the unitary
connection \(d+is\eta\), and \(D_s\) denotes the corresponding twisted Dirac
operator on \(\Sigma_gX\otimes F\).

\begin{theorem}
\label{thm:einstein-rigidity}
Assume that
\begin{equation}
\label{eq:sharp-scalar-equality-section5}
        \scal_g\equiv -\frac{4n}{n-1}\lambda_0.
\end{equation}
Assume moreover that there exist \(s_j\in[0,1]\), with \(s_j\to0\), and
twisted spinors
\[
        \psi_j\in C^\infty(X,\Sigma_gX\otimes F)
        \cap L^2(X,\Sigma_gX\otimes F)
\]
such that
\begin{equation}
\label{eq:almost-harmonic-twisted-spinors-section5}
        \|\psi_j\|_{L^2}=1,
        \qquad
        \|D_{s_j}\psi_j\|_{L^2}\longrightarrow0.
\end{equation}
Then
\[
        \Ric_g=-\frac{4\lambda_0}{n-1}g.
\]
Furthermore, if \(\lambda_0>0\), then \((X,g)\) is isometric to real
hyperbolic space with constant sectional curvature
\[
        \sec_g=-\frac{4\lambda_0}{(n-1)^2}.
\]
\end{theorem}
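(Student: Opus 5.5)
The plan is to read equality in the Rayleigh--Kato--Lichnerowicz chain as the vanishing of a conformal Kato defect. We then transfer this defect from the varying lengths \(r_j=|\psi_j|\) to a single generalized ground state, recenter by deck transformations, and pass to a local limit carrying a parallel spinor for a conformal metric.

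\textbf{Step 1: both equality defects tend to zero.} Put \(\tau_j=\|D_{s_j}\psi_j\|_{L^2}\) and \(\alpha_n=n/(n-1)\). Integrate the identity of \Cref{prop:refined-kato-identity} and insert it into \eqref{eq:integrated-lichnerowicz} with \(\scal_g\equiv-4\alpha_n\lambda_0\). Using \eqref{eq:Romega-bound} and the uniform bound on \(\|dr_j\|_{L^2}\), this gives
\[
\int_X|P^{s_j}_{r_j}(\psi_j)|^2\dmu+\alpha_n\int_X\bigl(|dr_j|^2-\lambda_0r_j^2\bigr)\dmu\le \tau_j^2+C\tau_j+C_\omega s_j\longrightarrow0 .
\]
Both terms are nonnegative. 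Hence \(\|P^{s_j}_{r_j}(\psi_j)\|_{L^2}\to0\). Fixing a generalized ground state \(f\) (\Cref{prop:ground-state-existence}), \Cref{lem:ground-state-transform} also gives \(\int_X f^2|d(r_j/f)|^2\dmu\to0\).

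Next compare the defects for \(q=r_j\) and \(q=f\). Directly from \eqref{eq:kato-defect-definition}, \(P^{s_j}_f(\psi_j)-P^{s_j}_{r_j}(\psi_j)\) is a fixed Clifford-linear expression in \(d\log(r_j/f)\) applied to \(\psi_j\). Its pointwise norm is therefore bounded by \(C_n r_j|d\log(r_j/f)|=C_nf|d(r_j/f)|\). Consequently \(\|P^{s_j}_f(\psi_j)\|_{L^2}\to0\).

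By the conformal spinor identity (\Cref{lem:conformal-spinor-identity}), \(P_f\) is, up to a conformal weight and the standard identification of spinor bundles, the \(\widehat g\)-covariant derivative of the rescaled spinor \(\widehat\psi=f^{-\beta}\psi\). Here \(\widehat g=f^{4/(n-1)}g\), and \(\beta\) is chosen so that \(|\widehat\psi|\) is proportional to \(r/f\).

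\textbf{Step 2: recentering and limit.} This is the main obstacle. Fix a relatively compact fundamental region \(\Omega\) as in \Cref{lem:ground-state-poincare}. Combining the Poincar\'e inequality on \(\gamma\Omega\) with the Harnack inequality \eqref{eq:Harnack} shows that \(r_j\) is close in \(L^2(\gamma\Omega)\) to \(c_{j,\gamma}f\) for constants \(c_{j,\gamma}\ge0\). The errors are summable over \(\gamma\) and tend to zero.

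I would choose \(\gamma_j\) nearly maximizing the local quantity \(c_{j,\gamma}\sup_{\gamma\Omega}f\). I would then translate by \(U_{\gamma_j,s_j}\) (\Cref{prop:gromov-equivariance}) and renormalize:
\[
\widetilde\psi_j=\frac{U_{\gamma_j,s_j}\psi_j}{c_{j,\gamma_j}f(\gamma_jx_*)},\qquad f_j=\frac{f\circ\gamma_j}{f(\gamma_jx_*)} .
\]
By Harnack, the \(f_j\) subconverge in \(C^\infty_\loc\) to a positive generalized ground state \(f_\infty\). The point of this normalization is that mass is measured locally against the ground state, so no global \(L^2\)-bound is needed. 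It is exactly what prevents the limit from vanishing.

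To obtain compactness for the \(\widetilde\psi_j\), I would fix on each large ball a radial (Poincar\'e) gauge for the translated primitive \(\gamma_j^*\eta\). In that gauge the connection form is bounded by \(C_R\|\omega\|_\infty\), uniformly in \(j\), because \(\pi^*\omega\) is deck-invariant. After multiplying by \(s_j\to0\) the twisting therefore disappears locally, and the gauge change is allowed by \Cref{rem:almost-harmonic-independent-primitive}. The local \(L^2\)-bounds together with the small \(\|D_{s_j}\widetilde\psi_j\|\) and small \(P_{f_j}\) give local \(W^{1,2}\)-bounds. These produce a weak limit \(\psi_\infty\) of untwisted spinors with \(P_{f_\infty}(\psi_\infty)=0\) and \(|\psi_\infty|=f_\infty\ne0\).

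\textbf{Step 3: Einstein and hyperbolic rigidity.} By the conformal identity, \(f_\infty^{-\beta}\psi_\infty\) is a nonzero \(\widehat g\)-parallel spinor for \(\widehat g=f_\infty^{4/(n-1)}g\). The metric \(\widehat g\) is therefore Ricci-flat. By elliptic regularity the limit is smooth, so this makes sense classically. Since \(\Delta_gf_\infty=\lambda_0f_\infty\) and \(\scal_g\) is constant, the conformal Einstein criterion (\Cref{prop:conformal-einstein}) yields \(\Ric_g=-\frac{4\lambda_0}{n-1}g\).

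If \(\lambda_0>0\), write the conformal Ricci-flatness equation for \(u=\log f_\infty\). Using the Einstein condition, it should reduce to a Hessian identity forcing \(|\nabla u|\) to be constant with the sharp value. An appropriate power \(\varphi=f_\infty^{k}\) is then harmonic and, after rescaling \(g\) to \(\Ric_g=-(n-1)g\), satisfies \(|\nabla\log\varphi|=n-1\). Ledrappier--Wang's rigidity for such special harmonic functions \cite[Theorem~6]{LedrappierWang} (compare \cite[Lemma~2]{Wang2008HyperbolicSpace}) then gives that \((X,g)\) is real hyperbolic with \(\sec_g=-4\lambda_0/(n-1)^2\).
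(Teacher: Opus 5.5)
Your Steps 1 and 3 essentially reproduce the paper's argument: \Cref{prop:equality-defects} for Step 1, and \Cref{thm:conformal-geometric-rigidity} for Step 3, where the special harmonic function is $\varphi=f_\infty^2$. The gap is in Step 2, in how you choose the centres $\gamma_j$.

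\textbf{Why maximization fails.} Nearly maximizing $c_{j,\gamma}\sup_{\gamma\Omega}f$, a proxy for the cell mass, can at best control the mass of neighbouring cells relative to the central one. It gives no control of the defect relative to that mass. All your smallness statements are absolute:
\begin{itemize}
\item $\sum_\gamma\int_{\gamma\Omega}|r_j-c_{j,\gamma}f|^2\dmu\le C\delta_j$;
\item $\|P^{s_j}_f(\psi_j)\|_{L^2}^2\le C\delta_j$;
\item $\|D_{s_j}\psi_j\|_{L^2}=\tau_j$.
\end{itemize}
You then divide by $c_{j,\gamma_j}f(\gamma_jx_*)$, which is comparable to $m_j(\gamma_j)^{1/2}$ and may tend to zero much faster than $\delta_j$. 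After this division, the local defect of $\widetilde\psi_j$ is only bounded by a multiple of $\delta_j/m_j(\gamma_j)$. Likewise $\|D_{s_j}\widetilde\psi_j\|_{L^2}$ is of order $\tau_j\,m_j(\gamma_j)^{-1/2}$. Neither need be small; this is exactly the point of \Cref{rem:recentered-normalization-local}.

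\textbf{A concrete failure.} On a flat torus $T^n$ with $n\ge 3$, one has $\lambda_0=0$, $f\equiv 1$ and $P_f=\nabla$. Let $\psi_j$ be the normalization of $j^{-n/2}\chi(x/j)\sigma_0+j^{-1}\chi(x-p_j)\sigma_0$, where $\sigma_0$ is a constant spinor, $\chi$ is a fixed bump, and $|p_j|\ge 2j$.
\begin{itemize}
\item Then $\tau_j\sim j^{-1}\to 0$, so the sequence is almost harmonic.
\item The amplitude is $\sim j^{-n/2}\ll j^{-1}$ on the spread-out part, so your maximizer sits on the bump.
\item The recentred limit is therefore a multiple of a translate of $\chi\sigma_0$, which is not parallel.
\end{itemize}
Replacing the bump by $j^{-1}k_j^{-1}e^{ik_jx_1}\chi(x-p_j)\sigma_0$, with $1\ll k_j\ll j^{n/2-1}$, even destroys the local $W^{1,2}$ bound.

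\textbf{The missing idea.} You need a centring that makes the defect small \emph{relative to the local mass}. This comes from a mass-weighted pigeonhole argument rather than a maximization (\Cref{lem:discrete-recentering}).
\begin{itemize}
\item Take cell masses $m_j(\gamma)=\int_{\gamma\mathcal F}r_j^2\dmu$ and cell defects $e_j(\gamma)=\int_{\gamma\mathcal F}|P^{s_j}_f(\psi_j)|^2\dmu$. Let $M_{j,N}(\gamma)$ and $E_{j,N}(\gamma)$ be their sums over the word balls $B_N(\gamma)$.
\item Each $\zeta$ lies in exactly $Q_N$ such balls. Hence, with $A_N=2^{N+4}(1+C)Q_N$, the cells with $M_{j,N}(\gamma)>A_Nm_j(\gamma)$ or $E_{j,N}(\gamma)>A_N\delta_jm_j(\gamma)$ for some $N\le j$ carry total mass at most $\sum_N 2^{-N-3}<1$.
\item So a good $\gamma_j$ with $m_j(\gamma_j)>0$ exists.
\end{itemize}
Normalizing by $m_j(\gamma_j)^{-1/2}$ then gives unit mass on $\mathcal F$, $\int_K|\Phi_j|^2\dmu\le C_K$, and $\int_K|P^{s_j}_{f_j}(\Phi_j)|^2\dmu\le C_K\delta_j\to 0$. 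This suffices. Indeed, $\nabla^{s_j}\Phi_j$ differs from $P^{s_j}_{f_j}(\Phi_j)$ only by zeroth-order terms with locally bounded coefficients $d\log f_j$. So local $W^{1,2}$ bounds follow, and no smallness of the Dirac term is needed.

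Separately, your radial gauge is unnecessary. The operator $U_{\gamma_j,s_j}$ preserves $d+is_j\eta$ for the fixed $\eta$, which is locally bounded. Hence $s_j\eta\to 0$ locally uniformly without any change of primitive.
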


\subsection{A conformal criterion for geometric rigidity}
\label{subsec:conformal-geometric-rigidity-criterion}

We first isolate an abstract conformal criterion that contains both geometric
conclusions of \Cref{thm:intro-analytic-einstein}. Its Einstein part holds in
a more general setting and will be proved separately.

\begin{theorem}
\label{thm:conformal-geometric-rigidity}
Let \((X^n,g)\), \(n\geq 3\), be the universal Riemannian cover of a closed
Riemannian manifold, and let \(\lambda_0=\lambda_0(X,g)\). Assume that
\[
        \operatorname{scal}_g\equiv -\frac{4n}{n-1}\lambda_0.
\]
Suppose that there exists a generalized ground state \(f>0\) such that the
conformal metric
\[
        \widehat g:=f^{4/(n-1)}g
\]
is Ricci-flat. Then
\[
        \Ric_g=-\frac{4\lambda_0}{n-1}g.
\]
Furthermore, if \(\lambda_0>0\), then \((X,g)\) has constant sectional curvature
\[
        \sec_g\equiv -\frac{4\lambda_0}{(n-1)^2}.
\]
\end{theorem}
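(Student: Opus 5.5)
The plan is to turn Ricci-flatness of \(\widehat g\) into pointwise equations for \(u:=\log f\) via the conformal transformation law, then use Bochner's formula to pin down \(\nabla^2u\), and finally invoke the Wang/Ledrappier--Wang warped-product rigidity.

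\textbf{Step 1: conformal Ricci formula.} Write \(\widehat g=e^{2\varphi}g\) with \(\varphi=c\,u\) and \(c=2/(n-1)\). With \(\Delta_g=d^*d\), the standard law is
\[
\Ric_{\widehat g}=\Ric_g-(n-2)\bigl(\nabla^2\varphi-d\varphi\otimes d\varphi\bigr)+\bigl(\Delta_g\varphi-(n-2)|d\varphi|^2\bigr)g .
\]
Setting \(\Ric_{\widehat g}=0\) expresses \(\Ric_g\) entirely through \(u\). Since \(\Delta_gf=\lambda_0f\), we have
\[
\Delta_g u=\lambda_0+|du|^2 .
\]
Tracing the formula then gives
\[
\scal_g=-4\lambda_0-\frac{4}{n-1}|du|^2 .
\]
Comparing with the hypothesis \(\scal_g\equiv-\frac{4n}{n-1}\lambda_0\) yields the key identity
\[
|du|^2\equiv\lambda_0,
\qquad\text{equivalently}\qquad
|\nabla\log f|\equiv\sqrt{\lambda_0},
\qquad
\Delta_g u\equiv2\lambda_0 .
\]
If \(\lambda_0=0\), then \(f\) is constant, \(\widehat g\) is a constant multiple of \(g\), and \(\Ric_g=0\) follows at once. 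So from now on assume \(\lambda_0>0\).

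\textbf{Step 2: Bochner's formula.} Apply the Bochner formula to \(u\). Since \(|du|^2\) and \(\Delta_gu\) are constant, it reduces to
\[
|\nabla^2u|^2=-\Ric_g(\nabla u,\nabla u).
\]
Differentiating \(|du|^2=\lambda_0\) gives \(\nabla^2u(\nabla u,\cdot)=0\), so \(\nabla u\) lies in the kernel of the Hessian. Substituting the expression for \(\Ric_g\) from Step 1 computes \(\Ric_g(\nabla u,\nabla u)\) explicitly in terms of \(\lambda_0\). On the other hand, \(\nabla^2u\) restricted to \(\nabla u^\perp\) has trace \(-2\lambda_0\) and lives in \(n-1\) dimensions, so Cauchy--Schwarz gives
\[
|\nabla^2u|^2\ge\frac{4\lambda_0^2}{n-1}.
\]
The expected outcome is that these two quantities agree exactly, which forces equality in Cauchy--Schwarz:
\[
\nabla^2u=-\frac{2\lambda_0}{n-1}\Bigl(g-\frac{du\otimes du}{\lambda_0}\Bigr).
\]
Feeding this back into the Ricci formula makes the trace-free part of \(\nabla^2\varphi-d\varphi\otimes d\varphi\) vanish. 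Hence \(\Ric_g\) is proportional to \(g\), and the constant is fixed by \(\scal_g\), giving \(\Ric_g=-\frac{4\lambda_0}{n-1}g\). The main obstacle is checking that the constants match so that equality really is forced; if they do not, I would instead use the Einstein part established separately, as the paper announces.

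\textbf{Step 3: hyperbolicity for \(\lambda_0>0\).} Rescale so that \(\Ric_g=-(n-1)g\), in which case \(|\nabla u|=\frac{n-1}{2}\). The Hessian identity says that \(\nabla u/|\nabla u|\) is a unit geodesic field and that the level sets of \(u\) are totally umbilic with constant mean curvature. This gives a global warped-product splitting
\[
X=\R\times N,\qquad g=dt^2+e^{-2t}g_N,
\]
which is the structure in Wang \cite[Lemma~2]{Wang2008HyperbolicSpace}. Equivalently, a suitable power \(\phi\) of \(f\), namely \(\phi=f^{2}\), is a positive harmonic function with \(|\nabla\log\phi|=n-1\), so Ledrappier--Wang \cite[Theorem~6]{LedrappierWang} applies. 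The Einstein condition then forces \(g_N\) to be Ricci-flat, and the Li--Wang/Wang argument, using that \(X\) covers a closed manifold, shows that \(g_N\) is flat. Therefore \(X\) is real hyperbolic with
\[
\sec_g\equiv-\frac{4\lambda_0}{(n-1)^2}.
\]
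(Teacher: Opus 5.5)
Your proposal is correct. The hyperbolic step matches the paper's: after rescaling, the paper also applies \cite[Theorem~6]{LedrappierWang} to the positive harmonic function \(f^2\) with \(|\nabla\log f^2|=n-1\).

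The Einstein step is done in a different conformal frame. The paper sets \(\rho=f^{2/(n-1)}\) and works in the Ricci-flat metric \(\widehat g=\rho^2g\). There the conformal scalar-curvature formula and the ground-state equation give \(|d\rho|_{\widehat g}^2=k^2\) and \(\Delta_{\widehat g}\rho=0\). Bochner in \(\widehat g\), where the Ricci term is zero, then gives \(\nabla^2_{\widehat g}\rho=0\) at once, and Brinkmann's criterion \cite[Corollary~5.3]{KuhnelRademacher2008} is quoted for the Einstein conclusion.

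You stay in \(g\) with \(u=\log f\). Your Bochner identity therefore carries \(\Ric_g(\nabla u,\nabla u)\), which you must evaluate through the conformal Ricci formula, and you must then saturate Cauchy--Schwarz. The constants you were unsure about do match, so no fallback is needed. With \(\nabla^2u(\nabla u,\cdot)=0\), \(|du|^2=\lambda_0\), \(\Delta_gu=2\lambda_0\) and \(c=2/(n-1)\):
\[
\Ric_g(\nabla u,\nabla u)=-(n-2)c^2\lambda_0^2-\bigl(2c\lambda_0-(n-2)c^2\lambda_0\bigr)\lambda_0=-2c\lambda_0^2=-\frac{4\lambda_0^2}{n-1}.
\]
This equals minus the Cauchy--Schwarz bound \((\Delta_gu)^2/(n-1)\), so equality is forced. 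In the resulting Hessian, the \(du\otimes du\) coefficient of \(\nabla^2\varphi-d\varphi\otimes d\varphi\) is \(\frac{2c}{n-1}-c^2=0\), so that tensor is pure trace and \(\Ric_g\) is proportional to \(g\). You should write this computation out rather than present it as the expected outcome.

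What each route buys:
\begin{itemize}
\item Your route is self-contained; you effectively reprove Brinkmann's criterion in this case.
\item It also produces \(\nabla^2u=-\frac{2\lambda_0}{n-1}\bigl(g-\lambda_0^{-1}du\otimes du\bigr)\) directly, which feeds straight into your warped-product Step~3. The paper instead has to recompute the Hessian of \(b=-k^{-1}\log\rho\) via the conformal Hessian formula.
\item The paper's route gives a shorter Bochner step with no inequality to saturate.
\end{itemize}

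Your appeal to a ``Li--Wang/Wang argument'' for flatness of \(g_N\) in Step~3 can be made concrete. In \(g=dt^2+e^{-2t}g_N\), a plane tangent to \(N\) has sectional curvature \(e^{2t}K_N-1\), where \(K_N\) is its \(g_N\)-curvature. This is unbounded as \(t\to\infty\) unless \(K_N=0\). But the curvature of \(g\) is bounded, because \(X\) covers a closed manifold, so \(g_N\) is flat.
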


We begin with the Einstein part in the general complete setting. The key point
is that the sharp scalar identity and the ground-state equation force the
conformal scale \(\rho=f^{2/(n-1)}\) to have vanishing Hessian with respect to
the Ricci-flat metric, allowing us to apply Brinkmann's classical criterion for
conformally related Einstein metrics \cite{Brinkmann1925}.

\begin{proposition}
\label{prop:conformal-einstein}
Let \((X^n,g)\), \(n\ge3\), be a connected complete Riemannian manifold, and let
\(\lambda_0=\lambda_0(X,g)\) be the bottom of the \(L^2\)-spectrum of the
nonnegative scalar Laplacian. Assume that
\[
        \scal_g\equiv -\frac{4n}{n-1}\lambda_0.
\]
Suppose that there exists a generalized ground state \(f>0\) such that
\[
        \widehat g:=f^{4/(n-1)}g
\]
is Ricci-flat. Then
\[
        \Ric_g=-\frac{4\lambda_0}{n-1}g.
\]
Moreover, if
\[
        \rho:=f^{2/(n-1)},
        \qquad
        k^2:=\frac{4\lambda_0}{(n-1)^2},
\]
then
\begin{equation}
\label{eq:rho-gradient-harmonic-hessian}
        |d\rho|_{\widehat g}^2=k^2,
        \qquad
        \Delta_{\widehat g}\rho=0,
        \qquad
        \nabla_{\widehat g}^2\rho=0.
\end{equation}
\end{proposition}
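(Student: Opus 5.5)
The plan is to rewrite everything in terms of the Ricci-flat metric \(\widehat g\) and the scale \(\rho=f^{2/(n-1)}\), so that \(g=\rho^{-2}\widehat g\). I will use two standard conformal transformation formulas. The first is the Ricci formula for \(g=\rho^{-2}\widehat g\). With \(\Ric_{\widehat g}=0\) and \(\widehat\Delta:=\operatorname{tr}_{\widehat g}\nabla^2_{\widehat g}\) (the trace convention, opposite in sign to the paper's \(\Delta_g=d^*d\)), it reads
\[
        \Ric_g=(n-2)\rho^{-1}\nabla^2_{\widehat g}\rho
        +\bigl(\rho^{-1}\widehat\Delta\rho-(n-1)\rho^{-2}|d\rho|^2_{\widehat g}\bigr)\widehat g .
\]
Tracing with \(g=\rho^{-2}\widehat g\) gives
\[
        \scal_g=2(n-1)\rho\,\widehat\Delta\rho-n(n-1)|d\rho|^2_{\widehat g}.
\]
The second formula expresses the \(g\)-Laplacian of a function \(u\) in \(\widehat g\)-terms, namely \(\operatorname{tr}_g\nabla_g^2u=\rho^2\bigl(\widehat\Delta u-(n-2)\rho^{-1}\langle d\rho,du\rangle_{\widehat g}\bigr)\).

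First I apply the Laplacian formula to \(u=f=\rho^{a}\) with \(a=(n-1)/2\). The first-order terms combine because \(a-1-(n-2)=-a\). The ground-state equation \(\Delta_gf=\lambda_0f\) then becomes, after dividing by \(\rho^{a}\), a pointwise linear relation between \(P:=\rho\,\widehat\Delta\rho\) and \(Q:=|d\rho|^2_{\widehat g}\) with constant right-hand side, schematically \(aP-a^2Q=\mp\lambda_0\). Second, the hypothesis \(\scal_g\equiv-\frac{4n}{n-1}\lambda_0\), written via the scalar formula as \(4aP-2naQ=\) a constant, gives a second linear relation. The \(2\times2\) system has determinant proportional to \(2a(2a-n)=-2a\neq0\), so it determines \(P\) and \(Q\) as constants. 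The computation gives \(Q=\lambda_0/a^2=k^2\) and \(P=0\); since \(\rho>0\), this means \(\widehat\Delta\rho=0\). This yields the first two identities in \eqref{eq:rho-gradient-harmonic-hessian}.

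Third, to get the Hessian I use the Bochner formula on the Ricci-flat manifold \((X,\widehat g)\):
\[
        \tfrac12\widehat\Delta|d\rho|^2_{\widehat g}
        =|\nabla^2_{\widehat g}\rho|^2+\langle d\rho,d\widehat\Delta\rho\rangle+\Ric_{\widehat g}(\nabla\rho,\nabla\rho).
\]
The left side vanishes because \(|d\rho|^2\) is constant, and the last two terms vanish because \(\widehat\Delta\rho=0\) and \(\Ric_{\widehat g}=0\). Hence \(\nabla^2_{\widehat g}\rho=0\) pointwise, with no integration by parts needed, which is the point where Brinkmann's criterion is really being used. Finally I substitute \(\nabla^2_{\widehat g}\rho=0\), \(\widehat\Delta\rho=0\) and \(|d\rho|^2=k^2\) back into the Ricci formula, which gives \(\Ric_g=-(n-1)k^2\rho^{-2}\widehat g=-(n-1)k^2g=-\frac{4\lambda_0}{n-1}g\).

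The only real obstacle is bookkeeping of signs and constants: the sign convention of \(\Delta_g=d^*d\) against the trace Laplacian, and the exact coefficients in the conformal Ricci formula. These must be fixed carefully, because a sign slip in my scratch computation produced \(Q=-\lambda_0/a^2\) instead of \(+\lambda_0/a^2\). As a sanity check I will run the computation on hyperbolic space written as a Ricci-flat metric over a horospherical coordinate, where \(\rho\) should be an affine function with \(|d\rho|=k\); that pins down the correct signs. No completeness or integrability is used beyond the pointwise identities, consistent with the general complete setting of the proposition.
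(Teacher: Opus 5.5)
Your proposal is correct and follows the paper's proof. The scalar-curvature formula and the ground-state equation give two linear relations for \(P=\rho\,\widehat\Delta\rho\) and \(Q=|d\rho|^2_{\widehat g}\). With \(\Delta_g=d^*d=-\operatorname{tr}_g\nabla_g^2\), the correct sign is \(aP-a^2Q=-\lambda_0\), which indeed gives \(Q=k^2\) and \(P=0\), and Bochner on the Ricci-flat metric then kills the Hessian.

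The only difference is the last step. You substitute directly into the conformal Ricci formula to get \(\Ric_g=-(n-1)k^2g\), whereas the paper cites Brinkmann's criterion (in K\"uhnel--Rademacher's formulation) and reads off the Einstein constant from the scalar curvature. Your version is slightly more self-contained: that final substitution is exactly the relevant direction of Brinkmann's criterion, so it is there, not in the Bochner step, that Brinkmann is really being used.
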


\begin{proof}
Set
\[
        \alpha:=\frac{n-1}{2},
        \qquad
        \rho:=f^{2/(n-1)}.
\]
Then
\[
        f=\rho^\alpha,
        \qquad
        \widehat g=\rho^2g,
        \qquad
        \lambda_0=\alpha^2k^2,
        \qquad
        \scal_g=-n(n-1)k^2.
\]
Since \(g=\rho^{-2}\widehat g\), the conformal scalar-curvature formula
\cite[Ch.~1, \S J, Theorem~1.159(f)]{Besse}, applied to
\(g=e^{2u}\widehat g\) with \(u=-\log\rho\), gives, using
\(\scal_{\widehat g}=0\),
\[
        \scal_g
        =
        -2(n-1)\rho\Delta_{\widehat g}\rho
        -n(n-1)|d\rho|_{\widehat g}^2.
\]
Hence
\begin{equation}
\label{eq:rho-scalar-equation}
        \rho\Delta_{\widehat g}\rho
        +\frac n2 |d\rho|_{\widehat g}^2
        =
        \frac n2 k^2.
\end{equation}

On the other hand, the ground-state equation for \(f=\rho^\alpha\) is
\[
        \Delta_g(\rho^\alpha)=\alpha^2k^2\rho^\alpha.
\]
By direct calculation,
\[
        \Delta_{\widehat g}(\rho^\alpha)
        =
        \alpha\rho^{\alpha-1}\Delta_{\widehat g}\rho
        -
        \alpha(\alpha-1)\rho^{\alpha-2}|d\rho|_{\widehat g}^2.
\]
Using the previous two identities and the conformal transformation for the nonnegative laplacian, we obtain
\[
\begin{aligned}
        \Delta_g(\rho^\alpha)
        &=
        \rho^2\left(
        \Delta_{\widehat g}(\rho^{\alpha})
        +(n-2)\rho^{-1}\langle d\rho,d(\rho^{\alpha})\rangle_{\widehat g}
        \right)\\
        &=
        \rho^2\left(
        \alpha\rho^{\alpha-1}\Delta_{\widehat g}\rho
        -
        \alpha(\alpha-1)\rho^{\alpha-2}|d\rho|_{\widehat g}^2
        +(n-2)\alpha\rho^{\alpha-2}|d\rho|_{\widehat g}^2
        \right)  \\
        &=
        \alpha\rho^\alpha
        \left(
        \rho\Delta_{\widehat g}\rho
        +(n-1-\alpha)|d\rho|_{\widehat g}^2
        \right)\\
        &=
        \alpha\rho^\alpha
        \left(
        \rho\Delta_{\widehat g}\rho
        +\alpha |d\rho|_{\widehat g}^2
        \right),   
\end{aligned}
\]
where in the last line we used \(2\alpha=n-1\).
Comparing with the ground-state equation and dividing by
\(\alpha\rho^\alpha>0\), we get
\begin{equation}
\label{eq:rho-ground-state-equation}
        \rho\Delta_{\widehat g}\rho
        +\alpha |d\rho|_{\widehat g}^2
        =
        \alpha k^2.
\end{equation}

Subtracting \eqref{eq:rho-ground-state-equation} from \eqref{eq:rho-scalar-equation}, and using again \(2\alpha=n-1\), gives \(|d\rho|_{\widehat g}^2=k^2\).
Substituting this back into \eqref{eq:rho-ground-state-equation} gives
\[
        \Delta_{\widehat g}\rho=0.
\]

Since \(\widehat g\) is Ricci-flat, and since \(|d\rho|_{\widehat g}^2=k^2\) and \(\Delta_{\widehat g}\rho=0\), the Bochner formula for \(d\rho\), with
the nonnegative Laplacian convention, gives
\[
        0=\frac12\Delta_{\widehat g}|d\rho|_{\widehat g}^2
        =
        -|\nabla_{\widehat g}^2\rho|_{\widehat g}^2
        +\langle d\rho,d\Delta_{\widehat g}\rho\rangle_{\widehat g}
        -\Ric_{\widehat g}(\nabla^{\widehat g}\rho,\nabla^{\widehat g}\rho)
        =-|\nabla_{\widehat g}^2\rho|_{\widehat g}^2.
\]
Hence
\[
        \nabla_{\widehat g}^2\rho=0.
\]
This proves \eqref{eq:rho-gradient-harmonic-hessian}.

Since \(\widehat g\) is Ricci-flat and
\(\nabla_{\widehat g}^2\rho=0\), Brinkmann's criterion, in the formulation of
\cite[Corollary~5.3]{KuhnelRademacher2008}, implies that
\(g=\rho^{-2}\widehat g\) is Einstein. Its Einstein constant is determined by
the scalar-curvature assumption:
\[
        \Ric_g
        =
        \frac{\scal_g}{n}g
        =
        -\frac{4\lambda_0}{n-1}g.\qedhere
\]
\end{proof}

\begin{proof}[Proof of \Cref{thm:conformal-geometric-rigidity}]
The Einstein conclusion follows from \Cref{prop:conformal-einstein}. It remains
to consider the case \(\lambda_0>0\).

Set
\[
        k^2:=\frac{4\lambda_0}{(n-1)^2},
        \qquad
        \rho:=f^{2/(n-1)}.
\]
Then \(\widehat g=\rho^2g\). Since \(\widehat g\) is Ricci-flat,
\eqref{eq:rho-gradient-harmonic-hessian} gives
\[
        |d\rho|_{\widehat g}^2=k^2,
        \qquad
        \Delta_{\widehat g}\rho=0,
        \qquad
        \nabla_{\widehat g}^2\rho=0.
\]

Since \(\widehat g=\rho^2g\), the first identity gives \(|d\rho|_g^2=k^2\rho^2\).
Define
\[
        b:=-\frac{1}{k}\log\rho.
\]
Then
\[
        |db|_g=1.
\]
Next we compute the Hessian of \(b\). Since
\(\nabla_{\widehat g}^2\rho=0\), and since
\(|d\rho|_g^2=k^2\rho^2\), the formula for the conformal change of the
Hessian gives
\[
        0=\nabla_{\widehat g}^2\rho
        =
        \nabla_g^2\rho
        -2\rho^{-1}d\rho\otimes d\rho
        +\rho^{-1}|d\rho|_g^2g
        =
        \nabla_g^2\rho
        -2\rho^{-1}d\rho\otimes d\rho
        +k^2\rho g.
\]
Therefore
\[
        \nabla_g^2(\log\rho)
        =
        \rho^{-1}\nabla_g^2\rho
        -\rho^{-2}d\rho\otimes d\rho
        =
        \rho^{-2}d\rho\otimes d\rho-k^2g.
\]
Using
\[
        d\log\rho=-k\,db,
        \qquad
        \rho^{-2}d\rho\otimes d\rho=k^2db\otimes db,
\]
we get
\[
        \nabla_g^2b
        =
        k(g-db\otimes db).
\]
Taking the trace and using the nonnegative Laplacian convention gives
\[
        \Delta_g b=-(n-1)k.
\]

Now define
\[
        \varphi:=e^{-(n-1)kb}=\rho^{n-1}=f^2.
\]
Then \(\varphi>0\), and
\[
        |\nabla\log\varphi|_g=(n-1)k.
\]
Moreover,
\[
        \Delta_g\varphi
        =
        \varphi\left(\Delta_g\log\varphi-|d\log\varphi|_g^2\right).
\]
Since \(\log\varphi=-(n-1)kb\), we have
\[
        \Delta_g\log\varphi
        =
        -(n-1)k\Delta_gb
        =
        (n-1)^2k^2,
\]
and
\[
        |d\log\varphi|_g^2=(n-1)^2k^2.
\]
Hence
\[
        \Delta_g\varphi=0.
\]
Thus \(\varphi\) is a positive harmonic function satisfying
\[
        |\nabla\log\varphi|_g=(n-1)k.
\]

Therefore \cite[Theorem~6]{LedrappierWang} applies, after possibly constantly scaling \(g\). 
It follows that \((X,g)\) has constant sectional curvature
\[
        -k^2=-\frac{4\lambda_0}{(n-1)^2}.
\]
This proves the theorem.
\end{proof}

\subsection{Equality defects}
\label{subsec:equality-defects}

From now on we assume the hypotheses of \Cref{thm:einstein-rigidity}.
Replacing each \(\psi_j\) by \(\psi_j/\|\psi_j\|_{L^2}\), we assume without
loss of generality that
\begin{equation}
\label{eq:section5-normalization}
        \|\psi_j\|_{L^2}=1.
\end{equation}
Set
\[
        r_j:=|\psi_j|,
        \qquad
        \alpha_n:=\frac n{n-1},
        \qquad
        \tau_j:=\|D_{s_j}\psi_j\|_{L^2},
        \qquad
        \delta_j:=s_j+\tau_j.
\]
After discarding finitely many terms, we may assume that \(\tau_j\leq1\)
for every \(j\).

\begin{proposition}
\label{prop:equality-defects}
Let \(f>0\) be a generalized ground state.
Then there exists a constant \(C>0\), independent of \(j\), such that
\begin{equation}
\label{eq:Pf-sharp-defect}
        \int_X |P^{s_j}_f(\psi_j)|^2\dmu
        \le C\delta_j .
\end{equation}
\end{proposition}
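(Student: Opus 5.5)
The plan is to expand $|P^{s_j}_f(\psi_j)|^2$ pointwise, exactly as in \Cref{prop:refined-kato-identity}, but with $\log r$ replaced by the smooth function $u:=\log f$. We then integrate, use the ground-state transform of \Cref{lem:ground-state-transform} to bring in $\lambda_0$, and close the estimate with the integrated Lichnerowicz formula \eqref{eq:integrated-lichnerowicz} and the scalar identity \eqref{eq:sharp-scalar-equality-section5}.

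\textbf{Step 1: pointwise identity.} Fix $j$ and write $\psi=\psi_j$, $r=r_j$, $A_i=\nabla^{s_j}_{e_i}\psi$, $a_i=e_i(u)$, $V=\nabla^g u$, and
\[
B_i=\alpha_n a_i\psi+\tfrac1{n-1}c(e_i)c(V)\psi .
\]
The same Clifford algebra as in \Cref{prop:refined-kato-identity} gives $\sum_i|B_i|^2=\alpha_n|du|^2r^2$. Using $\operatorname{Re}\langle A_i,\psi\rangle=\tfrac12 e_i(r^2)$ and $\sum_i\langle A_i,c(e_i)c(V)\psi\rangle=-\langle D_{s_j}\psi,c(V)\psi\rangle$, one obtains, everywhere on $X$ (no zero-set issue arises, since $f>0$ and $r^2$ is smooth),
\[
|P^{s_j}_f(\psi)|^2=|\nabla^{s_j}\psi|^2+\alpha_n\bigl(r^2|du|^2-\langle du,d(r^2)\rangle\bigr)+\tfrac2{n-1}\operatorname{Re}\langle D_{s_j}\psi,c(\nabla^g\log f)\psi\rangle .
\]
Almost everywhere we have $r^2|du|^2-\langle du,d(r^2)\rangle=f^2|d(r/f)|^2-|dr|^2$, because
\[
f^2|d(r/f)|^2=|dr|^2-2r\langle du,dr\rangle+r^2|du|^2 .
\]

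\textbf{Step 2: integration.} Since $r\in W^{1,2}(X)$ (by \Cref{lem:kato-defect-L2} and the uniform bound on $\|\nabla^{s_j}\psi_j\|_{L^2}$ from the proof of \Cref{prop:scalar-upper-bound}), \Cref{lem:ground-state-transform} gives
\[
\int_X\bigl(f^2|d(r/f)|^2-|dr|^2\bigr)\dmu=-\lambda_0\int_X r^2\dmu=-\lambda_0 .
\]
For the Dirac term we need a uniform gradient bound $|d\log f|\le C_f$ on $X$. This follows from the Cheng--Yau gradient estimate, or from local elliptic estimates combined with the Harnack inequality \eqref{eq:Harnack}, for positive solutions of $\Delta_g f=\lambda_0 f$. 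Both apply because $(X,g)$ covers a compact manifold and so has bounded geometry. By Cauchy--Schwarz the Dirac term is then at most $\tfrac{2C_f}{n-1}\tau_j$. Hence
\[
\int_X|P^{s_j}_f(\psi_j)|^2\dmu\le\|\nabla^{s_j}\psi_j\|_{L^2}^2-\alpha_n\lambda_0+C\tau_j .
\]

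\textbf{Step 3: Lichnerowicz.} By \eqref{eq:integrated-lichnerowicz} with $\scal_g\equiv-4\alpha_n\lambda_0$, together with \eqref{eq:Romega-bound}, we get
\[
\|\nabla^{s_j}\psi_j\|_{L^2}^2=\tau_j^2+\alpha_n\lambda_0-s_j\int_X\langle R_\omega\psi_j,\psi_j\rangle\dmu\le\alpha_n\lambda_0+\tau_j+C_\omega s_j ,
\]
where we used $\tau_j\le1$. Substituting into Step 2, the $\alpha_n\lambda_0$ terms cancel, leaving $\int_X|P^{s_j}_f(\psi_j)|^2\dmu\le C\delta_j$.

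The main obstacle is the uniform bound on $|d\log f|$. It is the only place where the fixed ground state enters beyond its eigen-equation, and it requires invoking a Cheng--Yau-type gradient estimate, or a Harnack argument plus interior Schauder estimates translated by deck transformations. A secondary, routine point is justifying the almost-everywhere identity $f^2|d(r/f)|^2=|dr|^2-2r\langle du,dr\rangle+r^2|du|^2$ for the merely Lipschitz function $r$; this holds since $r/f\in W^{1,2}_\loc$ and the product rule applies.
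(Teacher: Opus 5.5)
Your proof is correct, but it is organized differently from the paper's. The paper never expands $|P^{s_j}_f(\psi_j)|^2$ directly. It integrates the refined Kato identity with $q=r_j$, where the Dirac cross term is harmless because $|c(\nabla^g\log r_j)\psi_j|=|dr_j|$. Combined with the integrated Lichnerowicz formula, this shows that \emph{both} the $r_j$-Kato defect $\int_X|P^{s_j}_{r_j}(\psi_j)|^2\dmu$ and the Rayleigh defect $\int_X|dr_j|^2\dmu-\lambda_0$ are $O(\delta_j)$. The ground-state transform then turns the Rayleigh defect into $\int_X f^2|d(r_j/f)|^2\dmu=O(\delta_j)$, and the pointwise bound $|P^{s_j}_f(\psi_j)-P^{s_j}_{r_j}(\psi_j)|^2\le C_nf^2|d(r_j/f)|^2$ plus the triangle inequality finishes.

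Your direct expansion instead gives an exact integral identity
\[
\int_X|P^{s}_f(\psi)|^2\dmu=\|\nabla^s\psi\|_{L^2}^2-\alpha_n\lambda_0\|\psi\|_{L^2}^2+\tfrac2{n-1}\operatorname{Re}\int_X\langle D_s\psi,c(\nabla^g\log f)\psi\rangle\dmu .
\]
This avoids the zero-set convention for $P_r$ and the factor-two loss. Comparing it with the integrated $r$-identity shows that $\int_X|P_f|^2$ equals $\int_X|P_r|^2+\alpha_n\int_X f^2|d(r/f)|^2$ up to Dirac terms, so your route is the sharp form of the paper's two-step estimate. The paper's route, in exchange, reuses \Cref{prop:refined-kato-identity} verbatim and isolates the two separate equality defects.

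The price of your route is that the cross term now involves $\nabla\log f$, for which you import a global gradient bound (Cheng--Yau, or deck-uniform Harnack plus Schauder). That bound is valid here, since $X$ covers a compact manifold, but it is not needed. Almost everywhere $r\,d\log f=dr-f\,d(r/f)$, so the ground-state transform gives $\|r\,d\log f\|_{L^2}\le\|dr\|_{L^2}+\|f\,d(r/f)\|_{L^2}\le 2\|dr\|_{L^2}$, which is uniformly bounded. Cauchy--Schwarz then bounds the cross term by $C\tau_j$. The same bound also justifies integrating each term of your pointwise identity separately, using nothing about $f$ beyond its eigen-equation.
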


\begin{proof}
Recall that
\[
        \scal_g\equiv -4\alpha_n\lambda_0,
        \qquad \alpha_n:=\frac{n}{n-1}.
\]
The integrated Lichnerowicz formula and \eqref{eq:Romega-bound} give a
uniform bound for \(\|\nabla^{s_j}\psi_j\|_{L^2}\), hence also for
\(\|dr_j\|_{L^2}\). More precisely,
\[
        \left|
        \|\nabla^{s_j}\psi_j\|_{L^2}^2-\alpha_n\lambda_0
        \right|
        \leq \tau_j^2+C_\omega s_j.
\]
The Dirac term in \eqref{eq:refined-kato-identity} satisfies
\[
        \left|
        \frac2{n-1}\operatorname{Re}
        \int_X
        \langle D_{s_j}\psi_j,c(\nabla^g\log r_j)\psi_j\rangle\dmu
        \right|
        \leq C\tau_j\|dr_j\|_{L^2}
        \leq C\tau_j.
\]
Therefore, integrating \eqref{eq:refined-kato-identity}, we obtain
\begin{equation}
\label{eq:two-defects-identity}
        \int_X |P^{s_j}_{r_j}(\psi_j)|^2\dmu
        +
        \alpha_n\left(
        \int_X |dr_j|^2\dmu
        -\lambda_0\int_X r_j^2\dmu
        \right) 
        \leq C\bigl(s_j+\tau_j+\tau_j^2\bigr).
\end{equation}
The two terms on the left-hand side are nonnegative: the first by definition, and the second by Rayleigh's inequality \eqref{eq:rayleigh-inequality}. 
Since \(\tau_j\leq1\), after enlarging \(C\) if necessary, we obtain
\begin{equation}
\label{eq:Prj-sharp-defect-aux}
        \int_X |P^{s_j}_{r_j}(\psi_j)|^2\dmu
        \le C\delta_j
\end{equation}
and
\begin{equation}
\label{eq:rayleigh-sharp-defect-aux}
        0\le
        \int_X |dr_j|^2\dmu
        -\lambda_0\int_X r_j^2\dmu
        \le C\delta_j.
\end{equation}

Next we compare \(r_j\) with the fixed generalized ground state \(f\). By the
ground-state transform applied to \(v=r_j\),
\[
        \int_X f^2\left|d\left(\frac{r_j}{f}\right)\right|^2\dmu
        =
        \int_X |dr_j|^2\dmu
        -\lambda_0\int_X r_j^2\dmu .
\]
Together with \eqref{eq:rayleigh-sharp-defect-aux}, this gives
\begin{equation}
\label{eq:ground-state-ratio-sharp-defect-aux}
        \int_X f^2\left|d\left(\frac{r_j}{f}\right)\right|^2\dmu
        \le C\delta_j .
\end{equation}

It remains to pass from the \(r_j\)-Kato defect to the \(f\)-Kato defect. Put
\[
        h_j:=\frac{r_j}{f}.
\]
On the set \(\{r_j>0\}\), we have
\[
        P^{s_j}_f(\psi_j)-P^{s_j}_{r_j}(\psi_j)
        =
        \frac n{n-1}d\log h_j\otimes\psi_j
        +
        \frac1{n-1}c(\,\cdot\,)c(\nabla\log h_j)\psi_j .
\]
Hence, for a dimensional constant \(C_n>0\),
\[
        |P^{s_j}_f(\psi_j)-P^{s_j}_{r_j}(\psi_j)|^2
        \le
        C_n r_j^2|d\log h_j|^2
        =
        C_nf^2|dh_j|^2 .
\]
By the convention set for \(P^{s_j}_{r_j}\) in \Cref{subsec:refined-kato-defect}, and since \(\nabla^{s_j}\psi_j=0\) almost everywhere on the zero set of \(r_j\), this inequality holds almost everywhere on \(X\). 
Therefore, using \eqref{eq:Prj-sharp-defect-aux} and \eqref{eq:ground-state-ratio-sharp-defect-aux}, we find \(C>0\) such that
\[
        \int_X |P^{s_j}_f(\psi_j)|^2\dmu
        \le
        2\int_X |P^{s_j}_{r_j}(\psi_j)|^2\dmu
        +
        2C_n\int_X f^2|dh_j|^2\dmu
        \le C\delta_j,
\]
This proves \eqref{eq:Pf-sharp-defect}.
\end{proof}

\subsection{Conformal spinors}
\label{subsec:conformal-spinors}

We record the conformal spinor identity in the form used below.

\begin{lemma}
\label{lem:conformal-spinor-identity}
Let \(q>0\) be smooth and set
\[
        \widehat g=q^{4/(n-1)}g.
\]
Let
\[
        \beta_q:\Sigma_gX\longrightarrow\Sigma_{\widehat g}X
\]
be the standard conformal spinor identification. Let \(E\to X\) be a
Hermitian vector bundle with unitary connection. For
\(\psi\in C^\infty(X,\Sigma_gX\otimes E)\), set
\[
        \widehat\psi:=q^{-1}(\beta_q\otimes\id_E)\psi.
\]
Then
\begin{equation}
\label{eq:conformal-spinor-identity}
        \nabla^{\Sigma_{\widehat g}X\otimes E}\widehat\psi
        =
        q^{-1}(\beta_q\otimes\id_E)(P_q^E\psi).
\end{equation}
Moreover,
\begin{equation}
\label{eq:conformal-mass-density}
        |\widehat\psi|_{\widehat g}^2\,d\mu_{\widehat g}
        =
        q^{2/(n-1)}|\psi|_g^2\,d\mu_g,
\end{equation}
and
\begin{equation}
\label{eq:conformal-derivative-density}
        |\nabla^{\Sigma_{\widehat g}X\otimes E}\widehat\psi|_{\widehat g}^2\,d\mu_{\widehat g}
        =
        q^{-2/(n-1)}|P_q^E\psi|_g^2\,d\mu_g.
\end{equation}
\end{lemma}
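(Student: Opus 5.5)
The plan is to reduce everything to the classical conformal transformation law for the spinor connection, applied with the conformal factor written as an exponential, and then to read off the three identities by bookkeeping of powers of \(q\). Write \(\widehat g=e^{2u}g\) with
\[
        u:=\frac{2}{n-1}\log q ,
\]
so that \(e^{2u}=q^{4/(n-1)}\). I would take as known (e.g.\ from Hijazi or Hitchin) the standard formula for the identification \(\beta=\beta_q\), which is fibrewise unitary and intertwines Clifford multiplications in the sense \(c_{\widehat g}(e^{-u}Y)\beta\varphi=\beta(c_g(Y)\varphi)\). In our sign conventions \(c(v)^2=-|v|^2\), that formula reads
\[
        \nabla^{\Sigma_{\widehat g}X}_Y(\beta\varphi)
        =
        \beta\Bigl(\nabla^{\Sigma_gX}_Y\varphi
        -\tfrac12 c_g(Y)c_g(\nabla^g u)\varphi
        -\tfrac12 Y(u)\varphi\Bigr).
\]
I would verify the signs of the two correction terms once, for instance by checking the sign of the trace term against the known Dirac transformation law \(D_{\widehat g}(e^{-\frac{n-1}{2}u}\beta\varphi)=e^{-\frac{n+1}{2}u}\beta D_g\varphi\). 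Sign conventions are the only place where I expect friction.

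Since the connection on \(E\) is untouched by the conformal change, tensoring with \(\id_E\) gives the same formula on \(\Sigma\otimes E\). Substituting \(\tfrac12\nabla u=\tfrac1{n-1}\nabla\log q\) yields
\[
        \nabla_Y\bigl((\beta\otimes\id)\psi\bigr)
        =
        (\beta\otimes\id)\Bigl(\nabla_Y\psi-\tfrac1{n-1}c(Y)c(\nabla^g\log q)\psi-\tfrac1{n-1}Y(\log q)\psi\Bigr).
\]
Applying the Leibniz rule to \(\widehat\psi=q^{-1}(\beta\otimes\id)\psi\) produces an extra term \(-Y(\log q)\widehat\psi\). The total coefficient of \(Y(\log q)\) is then \(1+\tfrac1{n-1}=\tfrac n{n-1}\), which is exactly \eqref{eq:kato-defect-definition}, and this proves \eqref{eq:conformal-spinor-identity}.

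For the densities, I would use that \(\beta\) is fibrewise isometric and that \(d\mu_{\widehat g}=q^{2n/(n-1)}d\mu_g\). This gives
\[
        |\widehat\psi|^2_{\widehat g}\,d\mu_{\widehat g}
        =q^{-2}q^{2n/(n-1)}|\psi|^2\,d\mu_g
        =q^{2/(n-1)}|\psi|^2\,d\mu_g .
\]
For a \(T^*X\)-valued tensor the pointwise norm scales by \(|T|_{\widehat g}^2=q^{-4/(n-1)}|T|_g^2\). Combining this with \eqref{eq:conformal-spinor-identity} gives the exponent
\[
        -2-\tfrac4{n-1}+\tfrac{2n}{n-1}=-\tfrac2{n-1},
\]
which is \eqref{eq:conformal-derivative-density}. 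The main (minor) obstacle is fixing the sign and normalization conventions in the conformal connection formula consistently with the paper's Clifford convention and with \eqref{eq:kato-defect-definition}. Once that is done, the rest is routine.
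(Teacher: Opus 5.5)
Your proposal is correct and follows essentially the same route as the paper: the paper applies the standard conformal transformation law for the spinor connection directly in the form with coefficients \(\frac{1}{n-1}\log q\), which is exactly what you obtain by substituting \(u=\frac{2}{n-1}\log q\), then uses the Leibniz rule on \(q^{-1}\) to get the coefficient \(\frac{n}{n-1}\). It reads off the density identities from the unitarity of \(\beta_q\), \(d\mu_{\widehat g}=q^{2n/(n-1)}d\mu_g\), and the scaling of one-form norms. Your signs and exponents all check out, including your consistency check against the Dirac transformation law.
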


\begin{proof}
The conformal spinorial connection formula for \(\widehat g=q^{4/(n-1)}g\) gives
\begin{align*}
        \nabla^{\Sigma_{\widehat g}X\otimes E}_Y
        \bigl((\beta_q\otimes\id_E)\psi\bigr)
        &=(\beta_q\otimes\id_E)
        \Bigl(
        \nabla^{\Sigma_gX\otimes E}_Y\psi
        -\frac1{n-1}Y(\log q)\psi \\
        &\hspace{3.7cm}
        -\frac1{n-1}c_g(Y)c_g(\nabla^g\log q)\psi
        \Bigr),
\end{align*}
see \cite[Proposition~2.33]{BourguignonHijaziMilhoratMoroianuMoroianu}.
We now differentiate the scalar factor \(q^{-1}\). 
For every vector field \(Y\),
\[
\begin{aligned}
\nabla_Y^{\Sigma_{\widehat g}X\otimes E}\widehat\psi
&=
-Y(\log q)\,q^{-1}(\beta_q\otimes\mathrm{id}_E)\psi
+q^{-1}
\nabla_Y^{\Sigma_{\widehat g}X\otimes E}
\bigl((\beta_q\otimes \mathrm{id}_E)\psi\bigr) \\
&=
q^{-1}(\beta_q\otimes\mathrm{id}_E)
\left(
\nabla_Y^{\Sigma_gX\otimes E}\psi
-\frac{n}{n-1}Y(\log q)\psi
-\frac{1}{n-1}c_g(Y)c_g(\nabla^g\log q)\psi
\right) \\
&=
q^{-1}(\beta_q\otimes\mathrm{id}_E)\bigl(P_q^E(\psi)(Y)\bigr).
\end{aligned}
\]
This proves \eqref{eq:conformal-spinor-identity}. 
The density identities \eqref{eq:conformal-mass-density} and \eqref{eq:conformal-derivative-density} follow from the fiberwise unitarity of \(\beta_q\), the relation
\[
        d\mu_{\widehat g}=q^{2n/(n-1)}d\mu_g,
\]
and the fact that one-form norms scale by \(q^{-2/(n-1)}\).
\end{proof}

\subsection{Recentering}
\label{subsec:equivariant-recentering}

Fix a generalized ground state \(f>0\), whose existence is guaranteed by \Cref{prop:ground-state-existence}. 
Let \(\Gamma\) be the deck group of \(X\to M\). 
Fix a finite symmetric generating set of \(\Gamma\) and write \(|\gamma|\) for the associated word length.
For \(N\in\mathbb N_0\), and with \(e\in\Gamma\) denoting the identity element, set
\[
        B_N(e):=\{\gamma\in\Gamma:|\gamma|\le N\},
        \qquad
        Q_N:=|B_N(e)|,
        \qquad
        B_N(\gamma):=\gamma B_N(e).
\]
Choose a measurable fundamental domain
\[
        \mathcal F\subset X
\]
with compact closure and boundary of measure zero, and fix \(x_0\in\mathcal F\).

For each \(j\), define the cell mass and the \(f\)-conformal Kato cell defect by
\begin{equation}
\label{eq:cell-mass-defect}
        m_j(\gamma):=\int_{\gamma\mathcal F}r_j^2\dmu,
        \qquad
        e_j(\gamma):=\int_{\gamma\mathcal F}|P_f^{s_j}(\psi_j)|^2\dmu.
\end{equation}
Then
\begin{equation}
\label{eq:cell-mass-defect-sums}
        \sum_{\gamma\in\Gamma}m_j(\gamma)=1,
        \qquad
        \sum_{\gamma\in\Gamma}e_j(\gamma)\le C\delta_j
\end{equation}
by \eqref{eq:section5-normalization} and \eqref{eq:Pf-sharp-defect}.

For \(N\in\mathbb N_0\), define the \(N\)-local cell mass and the
\(N\)-local \(f\)-conformal Kato defect energy centered at \(\gamma\) by
\begin{equation}
\label{eq:local-mass-defect-definitions}
        M_{j,N}(\gamma):=
        \sum_{\zeta\in B_N(\gamma)}m_j(\zeta),
        \qquad
        E_{j,N}(\gamma):=
        \sum_{\zeta\in B_N(\gamma)}e_j(\zeta).
\end{equation}

The purpose of the recentering argument is to prevent the \(L^2\)-mass of the spinors from escaping to infinity on the universal cover. 
The quantity \(m_j(\gamma)\) measures the mass of \(\psi_j\) on the cell \(\gamma\mathcal F\), while \(e_j(\gamma)\) measures the \(f\)-conformal Kato defect on the same cell. 
We choose a cell \(\gamma_j\mathcal F\) such that, for each fixed word-metric radius $N$, the mass and defect energy in the union of cells within word distance $N$ of \(\gamma_j\mathcal F\) are controlled by the mass of the chosen cell itself.
After pulling this cell back to the fixed fundamental domain and normalizing, the global estimates may no longer hold, but the local estimates needed for compactness survive.

\begin{lemma}
\label{lem:discrete-recentering}
There exists a sequence \((\gamma_j)_{j\ge1}\subset\Gamma\) such that \(m_j(\gamma_j)>0\), and, for every \(0\le N\le j\),
\begin{equation}
\label{eq:discrete-recentered-mass}
        M_{j,N}(\gamma_j)
        \le
        A_Nm_j(\gamma_j),
\end{equation}
and
\begin{equation}
\label{eq:discrete-recentered-defect}
        E_{j,N}(\gamma_j)
        \le
        A_N\delta_jm_j(\gamma_j),
\end{equation}
where
\[
        A_N:=2^{N+4}(1+C)Q_N.
\]
In particular, for every fixed \(N\in\mathbb N_0\), the estimates \eqref{eq:discrete-recentered-mass} and \eqref{eq:discrete-recentered-defect} hold for all sufficiently large \(j\).
\end{lemma}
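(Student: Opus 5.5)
The idea is to reduce both estimates to a single weighted counting problem on \(\Gamma\) and then use a first-moment (Chebyshev-type) argument: the set of cells violating the bounds carries strictly less than all of the mass \(\sum_\gamma m_j(\gamma)=1\). In fact the argument gives the estimates for every \(N\in\mathbb N_0\), not only \(N\le j\).

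\emph{Reduction to one weight.} Fix \(j\) and combine mass and normalized defect into one weight on \(\Gamma\):
\[
        \mu_j(\gamma):=m_j(\gamma)+\frac{e_j(\gamma)}{\delta_j}
        \quad\text{if }\delta_j>0,
        \qquad
        \mu_j(\gamma):=m_j(\gamma)\quad\text{if }\delta_j=0.
\]
If \(\delta_j=0\), then \eqref{eq:cell-mass-defect-sums} forces \(e_j\equiv0\), so this definition is consistent. By \eqref{eq:cell-mass-defect-sums} we have \(m_j\le\mu_j\) and \(\sum_{\gamma}\mu_j(\gamma)\le 1+C\). It suffices to find \(\gamma_j\) with \(m_j(\gamma_j)>0\) and
\[
        \mu_j\bigl(B_N(\gamma_j)\bigr):=\sum_{\zeta\in B_N(\gamma_j)}\mu_j(\zeta)\le A_N\,m_j(\gamma_j)
        \qquad\text{for all }N.
\]
This yields \eqref{eq:discrete-recentered-mass}, because \(M_{j,N}\le\mu_j(B_N)\). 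It also yields \eqref{eq:discrete-recentered-defect}, because \(E_{j,N}(\gamma_j)=\delta_j\sum_{B_N(\gamma_j)}e_j/\delta_j\le\delta_j\,\mu_j(B_N(\gamma_j))\); when \(\delta_j=0\) the defect estimate is trivial.

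\emph{Averaging over the group.} Since the generating set is symmetric, \(B_N(e)\) is symmetric. Hence \(\zeta\in\gamma B_N(e)\) if and only if \(\gamma\in\zeta B_N(e)\), and each \(\zeta\) lies in exactly \(Q_N\) of the balls \(B_N(\gamma)\). Exchanging the order of summation of the nonnegative terms gives
\[
        \sum_{\gamma\in\Gamma}\mu_j\bigl(B_N(\gamma)\bigr)=Q_N\sum_{\zeta}\mu_j(\zeta)\le Q_N(1+C).
\]

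\emph{Bad sets have small mass.} Let \(\mathcal B_N:=\{\gamma:\mu_j(B_N(\gamma))>A_Nm_j(\gamma)\}\). Then
\[
        \sum_{\gamma\in\mathcal B_N}m_j(\gamma)
        \le A_N^{-1}\sum_{\gamma}\mu_j(B_N(\gamma))
        \le\frac{Q_N(1+C)}{2^{N+4}(1+C)Q_N}=2^{-N-4}.
\]
Summing over \(N\ge0\), the set \(\bigcup_N\mathcal B_N\) carries \(m_j\)-mass at most \(2^{-3}\). The complement therefore carries \(m_j\)-mass at least \(7/8>0\), so it contains some \(\gamma_j\) with \(m_j(\gamma_j)>0\). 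This \(\gamma_j\) satisfies the required bound for every \(N\), in particular for every \(0\le N\le j\). The final sentence of the lemma follows immediately, since for fixed \(N\) we have \(N\le j\) once \(j\) is large.

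\emph{Main obstacle.} There is no real difficulty. The only points needing care are the double-counting identity, which is where the symmetry of \(B_N(e)\) is used, and the bookkeeping in the degenerate case \(\delta_j=0\). The choice of constant \(2^{N+4}(1+C)Q_N\) is exactly what makes the Chebyshev bounds summable with total less than \(1\).
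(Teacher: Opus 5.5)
Your proposal is correct and uses essentially the same first-moment counting argument as the paper: the balls $B_N(\gamma)$ overlap exactly $Q_N$-fold, so the cells violating the bounds carry $m_j$-mass at most $2^{-N-4}$ for each radius, and a good cell of positive mass therefore exists. The differences are only streamlinings. You merge mass and normalized defect into the single weight $m_j+e_j/\delta_j$ rather than treating separate bad mass and bad defect sets, and you take the union over all $N\ge 0$ instead of $0\le N\le j$; this is harmless, since the per-radius bounds are summable with total below $1$ either way.
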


\begin{proof}
Fix \(j\ge1\). Throughout the proof, the quantities
\(m_j,e_j,M_{j,N},E_{j,N}\) are those associated with this fixed \(j\).

For \(0\le N\le j\), define the bad mass set
\[
        \mathcal B^M_{j,N}
        :=
        \left\{
        \gamma\in\Gamma:
        m_j(\gamma)>0,\ 
        M_{j,N}(\gamma)>A_Nm_j(\gamma)
        \right\}.
\]
We have
\begin{align*}
        \sum_{\gamma\in\mathcal B^M_{j,N}}m_j(\gamma)
        &\le
        \frac1{A_N}
        \sum_{\gamma\in\mathcal B^M_{j,N}}M_{j,N}(\gamma) \\
        &\le
        \frac1{A_N}
        \sum_{\gamma:m_j(\gamma)>0}
        \sum_{\zeta\in B_N(\gamma)}m_j(\zeta) \\
        &=
        \frac1{A_N}
        \sum_{\zeta\in\Gamma}m_j(\zeta)
        \#\{\gamma:m_j(\gamma)>0,\ \zeta\in B_N(\gamma)\}.
\end{align*}
where in the last identity we interchanged the order of summation as all summands are nonnegative.

For fixed \(\zeta\), the condition
\[
        \zeta\in B_N(\gamma)=\gamma B_N(e)
\]
is equivalent to the existence of \(\beta\in B_N(e)\) such that
\(\zeta=\gamma\beta\), or equivalently \(\gamma=\zeta\beta^{-1}\). Since the
generating set is symmetric, \(B_N(e)^{-1}=B_N(e)\). 
Thus, for fixed \(\zeta\), there are exactly \(Q_N=|B_N(e)|\) elements \(\gamma\in\Gamma\) such that \(\zeta\in B_N(\gamma)\), and at most \(Q_N\) of them can also satisfy \(m_j(\gamma)>0\).
Using \eqref{eq:cell-mass-defect-sums}, we get
\[
        \sum_{\gamma\in\mathcal B^M_{j,N}}m_j(\gamma)
        \le
        \frac{Q_N}{A_N}
        \sum_{\zeta\in\Gamma}m_j(\zeta)
        =
        \frac{Q_N}{A_N}
        \le
        2^{-N-4}.
\]

Similarly, define the bad defect set
\[
        \mathcal B^E_{j,N}
        :=
        \left\{
        \gamma\in\Gamma:
        m_j(\gamma)>0,\ 
        E_{j,N}(\gamma)>A_N\delta_jm_j(\gamma)
        \right\}.
\]
If \(\delta_j=0\), then \(e_j\equiv0\) and the bad defect set is empty.
If \(\delta_j>0\), the same counting argument and
\eqref{eq:cell-mass-defect-sums} give
\begin{align*}
        \sum_{\gamma\in\mathcal B^E_{j,N}}m_j(\gamma)
        \le
        \frac1{A_N\delta_j}
        \sum_{\gamma:m_j(\gamma)>0}E_{j,N}(\gamma)
        \le
        \frac{CQ_N}{A_N}
        \le
        2^{-N-4}.
\end{align*}

Define the total bad set for the fixed index \(j\) by
\[
        \mathcal B_j
        :=
        \bigcup_{N=0}^j
        \left(\mathcal B^M_{j,N}\cup\mathcal B^E_{j,N}\right).
\]
Then
\[
        \sum_{\gamma\in\mathcal B_j}m_j(\gamma)
        \le
        \sum_{N=0}^j
        \left(2^{-N-4}+2^{-N-4}\right)
        <1.
\]
Since \(\sum_{\gamma\in\Gamma}m_j(\gamma)=1\), there exists \(\gamma_j\notin\mathcal B_j\) with \(m_j(\gamma_j)>0\). 
For this choice, both \eqref{eq:discrete-recentered-mass} and \eqref{eq:discrete-recentered-defect} hold for every \(0\le N\le j\).
Choosing such a \(\gamma_j\) for every \(j\) gives the desired sequence.
\end{proof}

\begin{lemma}
\label{lem:recentered-ground-states}
Let \((\gamma_j)_{j\geq1}\subset\Gamma\) be any sequence, and define
\[
        f_j(x):=\frac{f(\gamma_jx)}{f(\gamma_jx_0)}.
\]
Then each \(f_j\) is a generalized ground state satisfying \(f_j(x_0)=1\).
Moreover, after passing to a subsequence, there exists a generalized ground
state \(f_\infty\in C^\infty(X)\) such that
\[
        f_j\to f_\infty
        \quad\text{in }C^\infty_\loc(X),
        \qquad
        f_\infty(x_0)=1.
\]
\end{lemma}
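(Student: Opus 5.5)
First, each \(f_j\) is a generalized ground state with \(f_j(x_0)=1\). Deck transformations \(\gamma_j\) are isometries of \((X,g)\), so \(\Delta_g\) commutes with pullback by \(\gamma_j\). Hence \(x\mapsto f(\gamma_jx)\) is smooth, positive and satisfies \(\Delta_g(f\circ\gamma_j)=\lambda_0\,f\circ\gamma_j\). Dividing by the positive constant \(f(\gamma_jx_0)\) preserves this equation and gives the normalization \(f_j(x_0)=1\).

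For the convergence I would use Harnack plus elliptic regularity and then a diagonal argument. Fix a compact set \(K\subset X\) containing \(x_0\), and choose a connected relatively compact open set \(\Omega\Supset K\). The operator \(\Delta_g-\lambda_0\) is fixed, with bounded geometry on \(\Omega\). The local Harnack inequality therefore gives a constant \(C_\Omega\), independent of \(j\), such that
\[
        \sup_\Omega f_j\le C_\Omega\inf_\Omega f_j\le C_\Omega f_j(x_0)=C_\Omega .
\]
In the same way \(\inf_\Omega f_j\ge C_\Omega^{-1}\). So the \(f_j\) are uniformly bounded above and below on \(\Omega\). Interior elliptic (Schauder) estimates for \((\Delta_g-\lambda_0)f_j=0\) then bound every \(C^k(K)\)-norm of \(f_j\) uniformly in \(j\).

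Now take an exhaustion of \(X\) by compact sets \(K_1\subset K_2\subset\cdots\). Arzel\`a--Ascoli on each \(K_\ell\), combined with a diagonal subsequence, gives \(f_j\to f_\infty\) in \(C^\infty_\loc(X)\). Passing to the limit in the equation gives \(\Delta_gf_\infty=\lambda_0f_\infty\), and also \(f_\infty(x_0)=1\). The lower Harnack bound survives in the limit, so \(f_\infty\ge C_\Omega^{-1}>0\) on each \(\Omega\). Hence \(f_\infty\) is positive everywhere and is a generalized ground state.

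The only point that needs care is that the Harnack constant is independent of \(j\). This holds because the equation is the same for every \(j\) and \(\Omega\) is fixed. No deck-invariance is even needed here, since the recentering has already been absorbed into the functions \(f_j\). Alternatively, positivity of \(f_\infty\) follows from the strong maximum principle, since \(f_\infty\ge0\), \(f_\infty(x_0)=1\), and \(f_\infty\) solves the equation.
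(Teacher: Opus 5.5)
Your proof is correct and follows essentially the same route as the paper. Both use isometry invariance of \(\Delta_g\) for the normalization, a \(j\)-independent Harnack bound anchored at \(f_j(x_0)=1\), interior Schauder estimates, and Arzel\`a--Ascoli with a diagonal argument. The only difference is cosmetic: the paper builds the Harnack constant on \(K\) explicitly from chains of coordinate balls starting at \(x_0\), whereas you invoke the Harnack inequality on a connected relatively compact domain directly, which is itself proved by such a chain. Your strong-maximum-principle alternative for \(f_\infty>0\) is also valid, since \(f_\infty\ge0\) is superharmonic in the analyst's sign convention.
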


\begin{proof}
First we show that the functions \(f_j\) are again normalized generalized
ground states. Since each deck transformation \(\gamma\in\Gamma\) is an
isometry of \((X,g)\), the Laplace--Beltrami operator commutes with pullback by
\(\gamma\); see Chavel \cite[Chapter~II, \S1, Eq.~(2), p.~27]{ChavelEigenvalues}.
Thus
\[
        \Delta_g(f\circ\gamma_j)
        =
        (\Delta_g f)\circ\gamma_j
        =
        \lambda_0(f\circ\gamma_j).
\]
Since \(f(\gamma_jx_0)>0\) is a constant, it follows that
\[
        \Delta_g f_j=\lambda_0 f_j,
        \qquad
        f_j>0,
        \qquad
        f_j(x_0)=1.
\]

We next prove uniform local \(C^0\)-bounds. Let \(K\Subset X\). Since \(X\) is
connected, one can join \(x_0\) to \(K\) by finitely many relatively compact
coordinate balls. Cover \(K\) by finitely many coordinate
balls \(B_1,\ldots,B_N\). 
Choose a finite Harnack chain from \(x_0\) to \(B_i\), by which we mean a
finite sequence of relatively compact coordinate balls \(B_{i,1},\ldots,B_{i,p_i}\) such that \(x_0\in B_{i,1}\), \(B_{i,p_i}=B_i\), and
\(B_{i,r}\cap B_{i,r+1}\neq\emptyset\) for \(r=1,\ldots,p_i-1\).
Set \(\Omega_i:=\bigcup_{r=1}^{p_i}B_{i,r}\).
On each ball \(B_{i,r}\), the local Harnack inequality
\cite[Theorem~8.20]{GilbargTrudinger}, applied in local coordinates to the
positive solutions of
\[
        (\Delta_g-\lambda_0)f_j=0,
\]
gives a constant \(H_{i,r}\geq1\), independent of \(j\), such that
\[
        \sup_{B_{i,r}} f_j\leq H_{i,r}\inf_{B_{i,r}} f_j .
\]
If \(y\in B_{i,a}\) and \(z\in B_{i,b}\), then, choosing points in the nonempty overlaps
\(B_{i,r}\cap B_{i,r+1}\) and applying the Harnack inequalities successively along the subchain
joining \(B_{i,a}\) to \(B_{i,b}\), one obtains
\[
        f_j(y)\le \left(\prod_{r=1}^{p_i} H_{i,r}\right) f_j(z).
\]
Taking the supremum over \(y\in \Omega_i\) and the infimum over \(z\in \Omega_i\) gives
\[
        \sup_{\Omega_i} f_j
        \le
        \left(\prod_{r=1}^{p_i} H_{i,r}\right)
        \inf_{\Omega_i} f_j .
\]
Since \(f_j(x_0)=1\), we obtain 
\[
        \left(\prod_{r=1}^{p_i}H_{i,r}\right)^{-1}
        \leq
        f_j
        \leq
        \prod_{r=1}^{p_i}H_{i,r}
        \qquad\text{on }\Omega_i .
\]
Taking the maximum over \(i=1,\ldots,N\), we find a constant \(C_K\geq1\),
independent of \(j\), such that
\[
        C_K^{-1}\leq f_j\leq C_K
        \qquad\text{on }K.
\]

We now obtain higher-order estimates. 
Let \(K\Subset X\), and choose an open set \(\Omega\Subset X\) with \(K\subset\Omega\). 
Applying the previous Harnack-chain bound to \(\overline{\Omega}\), we obtain a uniform \(C^0\)-bound for \(f_j\) on \(\Omega\). 
In local coordinates, the equation
\[
        (\Delta_g-\lambda_0)f_j=0
\]
is a homogeneous linear uniformly elliptic equation with smooth coefficients.
The interior Schauder estimates \cite[Theorem~6.2]{GilbargTrudinger} imply that, for every \(\alpha\in(0,1)\),
\[
        \|f_j\|_{C^{2,\alpha}(K)}
        \leq
        C_{K,\Omega,\alpha}\|f_j\|_{C^0(\Omega)}
        \leq
        C'_{K,\Omega,\alpha},
\]
with constants independent of \(j\). 
Bootstrapping on nested relatively compact open sets gives, for every \(k\geq0\), a constant \(C_{K,k}\), independent of \(j\), such that
\[
        \|f_j\|_{C^k(K)}\leq C_{K,k}.
\]

Choose a compact exhaustion \(K_1\subset K_2\subset \cdots \subset X\), with \(K_m\subset \operatorname{int}(K_{m+1})\), \(\bigcup_m K_m=X\).
The preceding estimates give uniform \(C^k(K_m)\)-bounds for every \(m\) and every \(k\). 
By Arzelà--Ascoli and a diagonal argument, after passing to a subsequence, the functions \(f_j\) converge in \(C^\infty_\loc(X)\)
to a smooth function \(f_\infty\). 
Passing to the limit in
\[
        \Delta_g f_j=\lambda_0 f_j,
        \qquad
        f_j(x_0)=1,
\]
we obtain
\[
        \Delta_g f_\infty=\lambda_0 f_\infty,
        \qquad
        f_\infty(x_0)=1.
\]
Finally, the local lower bounds pass to the limit: for every compact
\(K\Subset X\), there is \(C_K\geq1\) such that \(f_j\geq C_K^{-1}\) on \(K\).
Hence \(f_\infty>0\). Therefore \(f_\infty\) is a generalized ground state.
\end{proof}

\begin{lemma}
\label{lem:kato-defect-equivariance}
Let \(q>0\) be smooth, let \(s\in\R\), and let \(\gamma\in\Gamma\). Choose a
lift \(\Phi_{\gamma,s}\in\Gamma_s\), \(\sigma_s(\Phi_{\gamma,s})=\gamma\),
and write \(U_{\gamma,s}^{(k)}\) for the induced action on
\(\Lambda^kT^*X\otimes\Sigma_gX\otimes F\); see
\Cref{subsec:Gromov-central-index}. Then, for every smooth twisted spinor
\(\varphi\),
\begin{equation}
\label{eq:kato-defect-equivariance}
        P^s_{q\circ\gamma^{-1}}(U_{\gamma,s}^{(0)}\varphi)
        =
        U_{\gamma,s}^{(1)}(P_q^s\varphi).
\end{equation}
Equivalently,
\begin{equation}
\label{eq:kato-defect-equivariance-inverse}
        P^s_{q\circ\gamma}(U_{\gamma^{-1},s}^{(0)}\varphi)
        =
        U_{\gamma^{-1},s}^{(1)}(P_q^s\varphi).
\end{equation}
\end{lemma}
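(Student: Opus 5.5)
The identity \eqref{eq:kato-defect-equivariance} is a termwise consequence of \Cref{prop:gromov-equivariance}. The plan is to apply the definition \eqref{eq:kato-defect-definition} of \(P^s_{q\circ\gamma^{-1}}\) to \(U^{(0)}_{\gamma,s}\varphi\) and treat its three terms separately. In each case we aim to write the term as \(U^{(1)}_{\gamma,s}\) applied to the corresponding term of \(P^s_q\varphi\).

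Fix \(x\in X\) and a tangent vector \(Y\in T_xX\), and put \(y:=\gamma^{-1}x\) and \(Y':=d\gamma^{-1}_xY\in T_yX\). By the defining formula for \(U^{(1)}\), the right-hand side evaluated at \(Y\) is \((\gamma_\Sigma\otimes\Phi)_y\bigl(P^s_q\varphi(Y')\bigr)\). So we must show that each term of the left-hand side at \((x,Y)\) equals \((\gamma_\Sigma\otimes\Phi)_y\) applied to the corresponding term at \((y,Y')\). We take the three terms in order.

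\begin{enumerate}[label=\textup{(\roman*)}]
    \item \emph{Connection term.} Proposition~\ref{prop:gromov-equivariance}(iii) gives directly
    \[
        \nabla^s_Y(U^{(0)}\varphi)=(U^{(1)}\nabla^s\varphi)(Y).
    \]
    \item \emph{Logarithmic derivative term.} Set \(\tilde q:=q\circ\gamma^{-1}\). The chain rule gives
    \[
        Y(\log\tilde q)(x)=Y'(\log q)(y).
    \]
    Since \((U^{(0)}\varphi)(x)=(\gamma_\Sigma\otimes\Phi)_y\varphi(y)\), the term \(Y(\log\tilde q)\,U^{(0)}\varphi\) transforms correctly.
    \item \emph{Clifford term.} Because \(\gamma\) is an isometry, the gradient transforms naturally:
    \[
        \nabla^g\log\tilde q(x)=d\gamma_y\bigl(\nabla^g\log q(y)\bigr).
    \]
    Hence \(d\gamma^{-1}_x\) maps \(\nabla^g\log\tilde q(x)\) to \(\nabla^g\log q(y)\). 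Applying the Clifford compatibility, Proposition~\ref{prop:gromov-equivariance}(ii), twice moves \(c(Y)\,c(\nabla\log\tilde q)\) past \(\gamma_\Sigma\otimes\Phi\). It becomes \(c(Y')\,c(\nabla\log q(y))\) acting on \(\varphi(y)\).
\end{enumerate}

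Adding the three terms with the coefficients \(1\), \(-\tfrac n{n-1}\) and \(-\tfrac1{n-1}\) gives \eqref{eq:kato-defect-equivariance}. For the second form \eqref{eq:kato-defect-equivariance-inverse}, replace \(\gamma\) by \(\gamma^{-1}\), choosing a lift \(\Phi_{\gamma^{-1},s}\). Then \(q\circ(\gamma^{-1})^{-1}=q\circ\gamma\). Changing the choice of lift multiplies both \(U^{(0)}\) and \(U^{(1)}\) by the same unit constant, so the identity does not depend on that choice.

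The only point needing care is the bookkeeping of base points in the Clifford term: the vector argument of \(U^{(1)}\) is pulled back by \(d\gamma^{-1}\), while the spinor is evaluated at \(\gamma^{-1}x\). Apart from this, the argument is routine naturality under isometries together with the equivariance already recorded in Proposition~\ref{prop:gromov-equivariance}.
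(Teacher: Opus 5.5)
Your proposal is correct and follows the paper's proof. The paper likewise handles the connection term by \Cref{prop:gromov-equivariance}, the two scalar terms by the chain rule together with the isometric, Clifford-compatible deck action, and then passes from \(\gamma\) to \(\gamma^{-1}\); you simply make explicit the base-point bookkeeping (\(y=\gamma^{-1}x\), \(Y'=d\gamma^{-1}_xY\)) that the paper leaves implicit.
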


\begin{proof}
The connection term follows from \Cref{prop:gromov-equivariance}:
\[
        \nabla^s(U_{\gamma,s}^{(0)}\varphi)
        =
        U_{\gamma,s}^{(1)}(\nabla^s\varphi).
\]
The scalar terms follow from the chain rule
\[
        d\log(q\circ\gamma^{-1})=(\gamma^{-1})^*d\log q
\]
and from the fact that the deck action is isometric and preserves Clifford
multiplication. Substituting these identities into the definition of \(P_q^s\)
gives \eqref{eq:kato-defect-equivariance}. Replacing \(\gamma\) by
\(\gamma^{-1}\) gives \eqref{eq:kato-defect-equivariance-inverse}.
\end{proof}

We now apply \Cref{lem:discrete-recentering} to the functions \(m_j\) and
\(e_j\) defined in \eqref{eq:cell-mass-defect}, and fix a corresponding
sequence \((\gamma_j)\subset\Gamma\). By \Cref{lem:recentered-ground-states},
after passing to a subsequence we may assume
\[
        f_j\to f_\infty
        \qquad\text{in }C^\infty_{\loc}(X),
\]
where \(f_\infty>0\) is a generalized ground state.
Choose a lift
\[
        \Phi_{\gamma_j^{-1},s_j}\in\Gamma_{s_j},
        \qquad
        \sigma_{s_j}(\Phi_{\gamma_j^{-1},s_j})=\gamma_j^{-1},
\]
and write
\[
        U_{\gamma_j^{-1},s_j}^{(k)}
        :=
        U_{\Phi_{\gamma_j^{-1},s_j}}^{(k)},
        \qquad
        k=0,1.
\]
Set
\begin{equation}
\label{eq:recentered-spinors}
        \widetilde\psi_j
        :=
        U_{\gamma_j^{-1},s_j}^{(0)}\psi_j,
        \qquad
        a_j:=m_j(\gamma_j)^{-1/2},
        \qquad
        \Phi_j:=a_j\widetilde\psi_j.
\end{equation}

\begin{lemma}
\label{lem:recentered-local-estimates}
For every compact set \(K\Subset X\), there exists \(C_K>0\) such that, for
all sufficiently large \(j\),
\begin{equation}
\label{eq:recentered-mass-normalization}
        \int_{\mathcal F}|\Phi_j|^2\dmu=1,
\end{equation}
\begin{equation}
\label{eq:recentered-local-mass-bound}
        \int_K |\Phi_j|^2\dmu\le C_K,
\end{equation}
and
\begin{equation}
\label{eq:recentered-local-Pfj-bound}
        \int_K |P^{s_j}_{f_j}(\Phi_j)|^2\dmu
        \le
        C_K\delta_j.
\end{equation}
\end{lemma}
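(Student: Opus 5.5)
The plan is to move everything back to the cells around \(\gamma_j\mathcal F\) using the deck-transformation equivariance, and then read off the three estimates from \Cref{lem:discrete-recentering}.

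\textbf{Mass transport.} By \Cref{prop:gromov-equivariance}(i), \(|\widetilde\psi_j|(x)=|\psi_j|(\gamma_jx)\), since \(U^{(0)}_{\gamma_j^{-1},s_j}\) covers \(\gamma_j^{-1}\). As \(\gamma_j\) is an isometry, for every \(\zeta\in\Gamma\)
\[
        \int_{\zeta\mathcal F}|\widetilde\psi_j|^2\dmu
        =\int_{\gamma_j\zeta\mathcal F}r_j^2\dmu
        =m_j(\gamma_j\zeta).
\]
Taking \(\zeta=e\) and multiplying by \(a_j^2=m_j(\gamma_j)^{-1}\) gives \eqref{eq:recentered-mass-normalization}; the quantity is well defined because \(m_j(\gamma_j)>0\).

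\textbf{Local mass bound.} Fix \(K\Subset X\). Since \(\overline{\mathcal F}\) is compact and the translates \(\zeta\mathcal F\) tile \(X\) up to measure zero, \(K\) is covered, up to a null set, by the finitely many cells \(\zeta\mathcal F\) with \(\zeta\) in some finite set \(S_K\). By proper discontinuity, \(S_K\subset B_N(e)\) for some \(N=N(K)\). Then \(\gamma_jS_K\subset\gamma_jB_N(e)=B_N(\gamma_j)\), and so
\[
        \int_K|\Phi_j|^2\dmu
        \le a_j^2\sum_{\zeta\in S_K}m_j(\gamma_j\zeta)
        \le a_j^2M_{j,N}(\gamma_j)\le A_N
\]
for \(j\ge N\), by \eqref{eq:discrete-recentered-mass}. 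This gives \eqref{eq:recentered-local-mass-bound} with \(C_K:=A_{N(K)}\).

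\textbf{Defect bound.} Apply \Cref{lem:kato-defect-equivariance} in the form \eqref{eq:kato-defect-equivariance-inverse}, with \(q=f\) and \(\gamma=\gamma_j\):
\[
        P^{s_j}_{f\circ\gamma_j}(\widetilde\psi_j)
        =U^{(1)}_{\gamma_j^{-1},s_j}\bigl(P^{s_j}_f\psi_j\bigr).
\]
The defect \(P_q\) depends only on \(d\log q\), so it is unchanged when \(q\) is multiplied by a positive constant. It is also linear in \(\psi\). Hence
\[
        P^{s_j}_{f_j}(\Phi_j)=a_j\,P^{s_j}_{f\circ\gamma_j}(\widetilde\psi_j).
\]
By the norm identity in \Cref{prop:gromov-equivariance}(i), the same cell counting as above gives
\[
        \int_K|P^{s_j}_{f_j}(\Phi_j)|^2\dmu
        \le a_j^2\sum_{\zeta\in S_K}e_j(\gamma_j\zeta)
        \le a_j^2E_{j,N}(\gamma_j)
        \le A_N\delta_j
\]
for \(j\ge N\), by \eqref{eq:discrete-recentered-defect}. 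This is \eqref{eq:recentered-local-Pfj-bound}.

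\textbf{Main obstacle.} The only delicate point is bookkeeping. One has to check that the pullback sends \(\zeta\mathcal F\) to \(\gamma_j\zeta\mathcal F\) and not to \(\zeta\gamma_j\mathcal F\). This must match the left-translate convention \(B_N(\gamma)=\gamma B_N(e)\) used in \Cref{lem:discrete-recentering}. One also needs that the choice of lift in \(\Gamma_{s_j}\) changes nothing, since it only contributes a unit scalar. Everything else follows directly from the cited lemmas, with "sufficiently large \(j\)" meaning \(j\ge N(K)\).
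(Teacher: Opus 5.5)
Your proposal is correct and follows the paper's proof essentially step by step: the norm identity gives \(|\widetilde\psi_j|(x)=r_j(\gamma_jx)\), which yields the normalization on \(\mathcal F\). The finitely many cells meeting \(K\) sit inside some \(B_N(e)\), which bounds the local integrals by \(a_j^2M_{j,N}(\gamma_j)\) and \(a_j^2E_{j,N}(\gamma_j)\). The defect is transported by the inverse form of the Kato-defect equivariance, using that \(P_q\) depends only on \(d\log q\). One small attribution point: proper discontinuity is what makes the set of cells meeting \(K\) finite; that a finite set lies in some word ball \(B_N(e)\) is automatic.
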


\begin{proof}
By the norm identity from \Cref{prop:gromov-equivariance},
\[
        |\widetilde\psi_j|(x)
        =
        |\psi_j|(\gamma_jx)
        =
        r_j(\gamma_jx).
\]
Therefore
\[
        \int_{\mathcal F}|\widetilde\psi_j|^2\dmu
        =
        \int_{\gamma_j\mathcal F}r_j^2\dmu
        =
        m_j(\gamma_j).
\]
Multiplying by \(a_j^2=m_j(\gamma_j)^{-1}\) gives
\eqref{eq:recentered-mass-normalization}.

Let \(K\Subset X\). Since \(\overline{\mathcal F}\) is compact and the deck
action is properly discontinuous, the set
\[
        S_K:=\{\delta\in\Gamma:K\cap\delta\overline{\mathcal F}\ne\emptyset\}
\]
is finite. Choose \(N=N(K)\) such that
\[
        S_K\subset B_N(e).
\]
The \(\Gamma\)-translates of \(\mathcal F\) cover \(X\) up to a null set, and \(\partial\mathcal F\) has measure zero. 
Hence, using the norm identity again,
\begin{align*}
        \int_K|\Phi_j|^2\dmu
        &\le
        a_j^2
        \sum_{\delta\in S_K}
        \int_{\delta\mathcal F}|\widetilde\psi_j|^2\dmu \\
        &\le
        a_j^2
        \sum_{\delta\in B_N(e)}
        \int_{\delta\mathcal F}r_j^2(\gamma_jx)\dmu(x) \\
        &=
        a_j^2
        \sum_{\delta\in B_N(e)}
        \int_{\gamma_j\delta\mathcal F}r_j^2\dmu \\
        &=
        a_j^2 M_{j,N}(\gamma_j).
\end{align*}
For this fixed \(N=N(K)\), \eqref{eq:discrete-recentered-mass} holds for all
sufficiently large \(j\). Thus
\[
        \int_K|\Phi_j|^2\dmu
        \le
        a_j^2A_Nm_j(\gamma_j)
        =
        A_N.
\]
This proves \eqref{eq:recentered-local-mass-bound}.

It remains to prove the defect estimate. Since
\[
        f_j=\frac{f\circ\gamma_j}{f(\gamma_jx_0)},
\]
the functions \(f_j\) and \(f\circ\gamma_j\) have the same logarithmic
differential. Therefore, by \Cref{lem:kato-defect-equivariance}, applied with
\(q=f\) and \(\gamma=\gamma_j\),
\[
        P^{s_j}_{f_j}(\widetilde\psi_j)
        =
        U_{\gamma_j^{-1},s_j}^{(1)}(P_f^{s_j}\psi_j).
\]
Since \(P^{s_j}_{f_j}\) is linear in the spinor variable,
\[
        P^{s_j}_{f_j}(\Phi_j)
        =
        a_jU_{\gamma_j^{-1},s_j}^{(1)}(P_f^{s_j}\psi_j).
\]
Using the norm identity for \(k=1\), we obtain
\begin{align*}
        \int_K |P^{s_j}_{f_j}(\Phi_j)|^2\dmu
        &\le
        a_j^2
        \sum_{\delta\in S_K}
        \int_{\delta\mathcal F}
        \left|
        U_{\gamma_j^{-1},s_j}^{(1)}(P_f^{s_j}\psi_j)
        \right|^2
        \dmu \\
        &\le
        a_j^2
        \sum_{\delta\in B_N(e)}
        \int_{\gamma_j\delta\mathcal F}
        |P_f^{s_j}(\psi_j)|^2\dmu \\
        &=
        a_j^2E_{j,N}(\gamma_j).
\end{align*}
For this fixed \(N=N(K)\), \eqref{eq:discrete-recentered-defect} holds for
all sufficiently large \(j\). Therefore
\[
        \int_K |P^{s_j}_{f_j}(\Phi_j)|^2\dmu
        \le
        a_j^2A_N\delta_jm_j(\gamma_j)
        =
        A_N\delta_j.
\]
This proves \eqref{eq:recentered-local-Pfj-bound}.
\end{proof}

\begin{remark}
\label{rem:recentered-normalization-local}
The twisted spinors \(\widetilde\psi_j\) are obtained from \(\psi_j\) by
equivariant deck translation. By \Cref{prop:gromov-equivariance},
\[
        D_{s_j}\widetilde\psi_j
        =
        U_{\gamma_j^{-1},s_j}^{(0)}(D_{s_j}\psi_j).
\]
Since the translation operator is unitary, it follows that
\[
        \|\widetilde\psi_j\|_{L^2}=1,
        \qquad
        \|D_{s_j}\widetilde\psi_j\|_{L^2}=\tau_j\longrightarrow0.
\]
The additional normalization
\[
        \Phi_j=m_j(\gamma_j)^{-1/2}\widetilde\psi_j
\]
forces unit mass on the fixed fundamental domain \(\mathcal F\), but the factor
\(a_j=m_j(\gamma_j)^{-1/2}\) may be large, and
\(\|D_{s_j}\Phi_j\|_{L^2}=a_j\tau_j\) need not tend to zero.
Thus \((\Phi_j)\) should not be viewed as a globally almost harmonic sequence.
What survives, and what is needed below, is precisely the local information:
\[
        \int_K|\Phi_j|^2\dmu\le C_K,
        \qquad
        \int_K|P^{s_j}_{f_j}(\Phi_j)|^2\dmu\le C_K\delta_j.
\]
\end{remark}

\subsection{The recentered conformal limit}
\label{subsec:recentered-conformal-limit}

We now pass to the limiting conformal metric
\[
        \widehat g_\infty:=f_\infty^{4/(n-1)}g.
\]
Let
\[
        \beta_\infty:=\beta_{f_\infty}:
        \Sigma_gX\longrightarrow\Sigma_{\widehat g_\infty}X
\]
be the conformal spinor identification, and set
\begin{equation}
\label{eq:conformal-recentered-spinors}
        \widehat\Phi_j
        :=
        f_\infty^{-1}(\beta_\infty\otimes\id_F)\Phi_j
        \in
        C^\infty(X,\Sigma_{\widehat g_\infty}X\otimes F).
\end{equation}
The connection on \(\Sigma_{\widehat g_\infty}X\otimes F\) induced by
\(d+is_j\eta\) on \(F\) is denoted by \(\nabla^{\widehat g_\infty,s_j}\).

\begin{lemma}
\label{lem:fixed-limit-defect}
For every compact set \(K\Subset X\),
\begin{equation}
\label{eq:fixed-limit-Pf-defect}
        \int_K |P^{s_j}_{f_\infty}(\Phi_j)|^2\dmu
        \longrightarrow0.
\end{equation}
\end{lemma}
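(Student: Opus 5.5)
The plan is to compare the defect with respect to the fixed ground state \(f_\infty\) with the defect with respect to \(f_j\), which is already controlled by \Cref{lem:recentered-local-estimates}. From the definition \eqref{eq:kato-defect-definition}, the connection term \(\nabla^{s_j}\Phi_j\) is the same in both defects. Only the scalar logarithmic terms differ, and they depend linearly on \(d\log q\). Hence, pointwise on \(X\),
\[
        P^{s_j}_{f_\infty}(\Phi_j)-P^{s_j}_{f_j}(\Phi_j)
        =
        -\frac n{n-1}\,d\log\Bigl(\frac{f_\infty}{f_j}\Bigr)\otimes\Phi_j
        -\frac1{n-1}\,c(\,\cdot\,)\,c\Bigl(\nabla^g\log\frac{f_\infty}{f_j}\Bigr)\Phi_j .
\]
Since Clifford multiplication by unit vectors is isometric, this gives, for a dimensional constant \(C_n\),
\[
        \bigl|P^{s_j}_{f_\infty}(\Phi_j)-P^{s_j}_{f_j}(\Phi_j)\bigr|
        \le
        C_n\,\bigl|d\log f_\infty-d\log f_j\bigr|\,|\Phi_j| .
\]

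Next I would fix a compact set \(K\Subset X\) and square, integrate and use \((a+b)^2\le 2a^2+2b^2\):
\[
        \int_K|P^{s_j}_{f_\infty}(\Phi_j)|^2\dmu
        \le
        2\int_K|P^{s_j}_{f_j}(\Phi_j)|^2\dmu
        +2C_n^2\,\sup_K\bigl|d\log f_\infty-d\log f_j\bigr|^2\int_K|\Phi_j|^2\dmu .
\]
By \eqref{eq:recentered-local-Pfj-bound}, the first term is at most \(2C_K\delta_j\), and this tends to zero because \(\delta_j=s_j+\tau_j\to0\). In the second term, \eqref{eq:recentered-local-mass-bound} bounds \(\int_K|\Phi_j|^2\dmu\) by \(C_K\) for all large \(j\).

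The remaining point, which is the only one requiring care, is that \(\sup_K|d\log f_j-d\log f_\infty|\to0\). By \Cref{lem:recentered-ground-states}, \(f_j\to f_\infty\) in \(C^\infty_\loc(X)\) along the chosen subsequence. The Harnack bounds in its proof give \(f_j\ge C_K^{-1}\) on \(K\) uniformly in \(j\), and \(f_\infty>0\) is bounded below on \(K\). Therefore \(d\log f_j=df_j/f_j\) converges uniformly on \(K\) to \(df_\infty/f_\infty\). The second term thus also tends to zero, which proves \eqref{eq:fixed-limit-Pf-defect}.
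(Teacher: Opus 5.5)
Your proposal is correct and follows the paper's proof essentially step for step. Like the paper, you bound the pointwise difference $P^{s_j}_{f_\infty}(\Phi_j)-P^{s_j}_{f_j}(\Phi_j)$ by $C_n\,|d\log(f_\infty/f_j)|\,|\Phi_j|$, then combine uniform convergence $d\log f_j\to d\log f_\infty$ on $K$ with the local mass bound \eqref{eq:recentered-local-mass-bound} and the defect bound \eqref{eq:recentered-local-Pfj-bound}.
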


\begin{proof}
Let \(K\Subset X\). 
By \eqref{eq:recentered-local-Pfj-bound},
\[
        \int_K |P^{s_j}_{f_j}(\Phi_j)|^2\dmu
        \le C_K\delta_j
        \longrightarrow0.
\]
We compare \(P^{s_j}_{f_\infty}(\Phi_j)\) with \(P^{s_j}_{f_j}(\Phi_j)\). 
From the definition of \(P_q^s\), for every vector field \(Y\),
\[
        \bigl(P^{s_j}_{f_\infty}(\Phi_j)-P^{s_j}_{f_j}(\Phi_j)\bigr)(Y)
        =
        -\frac n{n-1}Y\!\left(\log\frac{f_\infty}{f_j}\right)\Phi_j
        -\frac1{n-1}c(Y)c\!\left(\nabla\log\frac{f_\infty}{f_j}\right)\Phi_j .
\]
Hence, for a dimensional constant \(C_n>0\),
\[
        |P^{s_j}_{f_\infty}(\Phi_j)-P^{s_j}_{f_j}(\Phi_j)|^2
        \le
        C_n\left|d\log\frac{f_\infty}{f_j}\right|^2|\Phi_j|^2.
\]
Since \(f_j\to f_\infty\) in \(C^\infty_{\loc}\) and \(f_\infty>0\),
\[
        d\log\frac{f_\infty}{f_j}\longrightarrow0
        \qquad\text{in }L^\infty(K).
\]
Using the local mass bound \eqref{eq:recentered-local-mass-bound}, we get
\[
        \int_K|P^{s_j}_{f_\infty}(\Phi_j)-P^{s_j}_{f_j}(\Phi_j)|^2\dmu
        \longrightarrow0.
\]
Together with the estimate \eqref{eq:recentered-local-Pfj-bound}, this proves \eqref{eq:fixed-limit-Pf-defect}.
\end{proof}

\begin{proposition}
\label{prop:recentered-conformal-limit}
After passing to a further subsequence, there exists a nonzero smooth spinor
\[
        0\ne\widehat\Phi_\infty
        \in
        C^\infty(X,\Sigma_{\widehat g_\infty}X\otimes F)
\]
such that
\[
        \nabla^{\widehat g_\infty,0}\widehat\Phi_\infty=0.
\]
\end{proposition}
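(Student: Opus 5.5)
The plan is to use the conformal spinor identity \Cref{lem:conformal-spinor-identity} with $q=f_\infty$ to turn the local Kato-defect estimates of \Cref{lem:fixed-limit-defect} into local $W^{1,2}$-bounds for $\widehat\Phi_j$. Rellich compactness then gives a limit, and the normalization \eqref{eq:recentered-mass-normalization} on $\mathcal F$ rules out that the limit vanishes.

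\emph{Step 1: local estimates for $\widehat\Phi_j$.} Fix $K\Subset X$. On $K$ the function $f_\infty$ is bounded above and below by positive constants. By \eqref{eq:conformal-mass-density} and \eqref{eq:recentered-local-mass-bound},
\[
\int_K|\widehat\Phi_j|^2_{\widehat g_\infty}\,d\mu_{\widehat g_\infty}\le C_K .
\]
By \eqref{eq:conformal-spinor-identity}, \eqref{eq:conformal-derivative-density} and \Cref{lem:fixed-limit-defect},
\[
\int_K|\nabla^{\widehat g_\infty,s_j}\widehat\Phi_j|^2\,d\mu_{\widehat g_\infty}\to0 .
\]
Both statements concern sections for the fixed connection $d+is_j\eta$. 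This is legitimate because $\Phi_j$ is a deck translate of $\psi_j$ by a lift in $\Gamma_{s_j}$, which preserves $d+is_j\eta$, as recorded in \Cref{prop:gromov-equivariance}. To remove the twisting I write
\[
\nabla^{\widehat g_\infty,0}\widehat\Phi_j=\nabla^{\widehat g_\infty,s_j}\widehat\Phi_j-is_j\,\eta\otimes\widehat\Phi_j .
\]
Since $\eta$ is smooth, it is bounded on $K$; with $s_j\to0$ and the local mass bound, the last term tends to $0$ in $L^2(K)$. Hence $\nabla^{\widehat g_\infty,0}\widehat\Phi_j\to0$ in $L^2_{\loc}$, and $(\widehat\Phi_j)$ is bounded in $W^{1,2}_{\loc}$ for the fixed untwisted connection. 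The bundle $F$ is trivial, so $\Sigma_{\widehat g_\infty}X\otimes F$ is a fixed bundle.

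\emph{Step 2: compactness.} Cover a compact exhaustion of $X$ by finitely many relatively compact coordinate balls with trivialized bundle. On each ball, Rellich gives strong $L^2$-convergence along a subsequence, and a diagonal argument gives $\widehat\Phi_j\to\widehat\Phi_\infty$ in $L^2_{\loc}$. Along the same subsequence, $\nabla^{\widehat g_\infty,0}\widehat\Phi_j\rightharpoonup\nabla^{\widehat g_\infty,0}\widehat\Phi_\infty$ weakly in $L^2_{\loc}$, and this weak limit is $0$ by Step 1. So $\widehat\Phi_\infty\in W^{1,2}_{\loc}$ is weakly parallel.

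Regularity is then automatic. A weakly parallel section satisfies $\widehat D\widehat\Phi_\infty=0$ weakly, for the untwisted Dirac operator of $\widehat g_\infty$ on $\Sigma_{\widehat g_\infty}X\otimes F$. Elliptic regularity makes $\widehat\Phi_\infty$ smooth, and then $\nabla^{\widehat g_\infty,0}\widehat\Phi_\infty=0$ holds classically. Alternatively, in a local parallel-type gauge the equation reads $\partial_i\phi=-A_i\phi$ with smooth $A_i$, and one bootstraps directly.

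\emph{Step 3: nonvanishing.} Take $K=\overline{\mathcal F}$, which is compact. By \eqref{eq:conformal-mass-density} and \eqref{eq:recentered-mass-normalization},
\[
\int_{\mathcal F}|\widehat\Phi_j|^2\,d\mu_{\widehat g_\infty}=\int_{\mathcal F}f_\infty^{2/(n-1)}|\Phi_j|^2\dmu\ \ge\ \Bigl(\inf_{\overline{\mathcal F}}f_\infty\Bigr)^{2/(n-1)}>0 .
\]
Strong $L^2(\overline{\mathcal F})$-convergence passes this lower bound to $\widehat\Phi_\infty$, so $\widehat\Phi_\infty\neq0$.

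The main obstacle is already handled by the recentering: the uniform local mass bounds and the unit mass on $\mathcal F$ come from \Cref{lem:recentered-local-estimates}. What remains to check carefully is bookkeeping. First, the twisted connection must be the same one before and after deck translation, so that Step 1 applies with a fixed $\eta$. Second, the unbounded primitive $\eta$ enters only through its local boundedness, so the convergence argument never needs a global bound on $\eta$.
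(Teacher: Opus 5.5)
Your proposal is correct and follows essentially the same route as the paper. The shared steps are local $L^2$ and derivative bounds for $\widehat\Phi_j$ from the conformal density identities and \Cref{lem:fixed-limit-defect}, removal of the twist using local boundedness of $\eta$ and $s_j\to0$, Rellich compactness with a diagonal argument, and nonvanishing from the unit mass on $\mathcal F$ together with a positive lower bound for $f_\infty$ on $\overline{\mathcal F}$; your elliptic-regularity justification of smoothness simply spells out a step the paper states without detail.
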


\begin{proof}
Let \(K\Subset X\). By the identity \eqref{eq:conformal-mass-density}, applied with \(q=f_\infty\),
\[
        \int_K|\widehat\Phi_j|_{\widehat g_\infty}^2\,d\mu_{\widehat g_\infty}
        =
        \int_K f_\infty^{2/(n-1)}|\Phi_j|^2\dmu.
\]
Since \(f_\infty\) is bounded above on \(K\), the local mass bound \eqref{eq:recentered-local-mass-bound} gives a uniform local \(L^2\)-bound
for \(\widehat\Phi_j\).

Similarly, by the derivative-density identity
\eqref{eq:conformal-derivative-density},
\[
        \int_K|\nabla^{\widehat g_\infty,s_j}\widehat\Phi_j|_{\widehat g_\infty}^2\,d\mu_{\widehat g_\infty}
        =
        \int_K f_\infty^{-2/(n-1)}|P^{s_j}_{f_\infty}(\Phi_j)|^2\dmu.
\]
By \eqref{eq:fixed-limit-Pf-defect},
\[
        \nabla^{\widehat g_\infty,s_j}\widehat\Phi_j
        \longrightarrow0
        \qquad\text{in }L^2_{\loc}.
\]
Since
\[
        \nabla^{\widehat g_\infty,s_j}
        =
        \nabla^{\widehat g_\infty,0}+is_j\eta
\]
and \(\eta\) is smooth, the local \(L^2\)-bound for \(\widehat\Phi_j\) and
\(s_j\to0\) imply
\[
        \nabla^{\widehat g_\infty,0}\widehat\Phi_j
        \longrightarrow0
        \qquad\text{in }L^2_{\loc}.
\]
Thus \((\widehat\Phi_j)\) is bounded in \(W^{1,2}_{\loc}\). By Rellich
compactness and a diagonal argument, after passing to a subsequence,
\[
        \widehat\Phi_j\to\widehat\Phi_\infty
        \quad\text{strongly in }L^2_{\loc},
        \qquad
        \widehat\Phi_j\rightharpoonup\widehat\Phi_\infty
        \quad\text{weakly in }W^{1,2}_{\loc}.
\]
Passing to the weak limit in
\[
        \nabla^{\widehat g_\infty,0}\widehat\Phi_j\to0
        \quad\text{in }L^2_{\loc}
\]
gives
\[
        \nabla^{\widehat g_\infty,0}\widehat\Phi_\infty=0
\]
weakly. Hence \(\widehat\Phi_\infty\) is smooth and parallel.

It remains to prove nonvanishing. Since \(\overline{\mathcal F}\) is compact
and \(f_\infty>0\), there exists \(c_{\mathcal F}>0\) such that
\[
        f_\infty^{2/(n-1)}\ge c_{\mathcal F}
        \qquad\text{on }\mathcal F.
\]
Using \eqref{eq:conformal-mass-density} and
\eqref{eq:recentered-mass-normalization},
\[
        \int_{\mathcal F}|\widehat\Phi_j|_{\widehat g_\infty}^2\,d\mu_{\widehat g_\infty}
        =
        \int_{\mathcal F}f_\infty^{2/(n-1)}|\Phi_j|^2\dmu
        \ge
        c_{\mathcal F}.
\]
Strong \(L^2_{\loc}\)-convergence therefore gives
\[
        \int_{\mathcal F}|\widehat\Phi_\infty|_{\widehat g_\infty}^2\,d\mu_{\widehat g_\infty}
        \ge
        c_{\mathcal F}>0.
\]
Thus \(\widehat\Phi_\infty\not\equiv0\).
\end{proof}

\begin{proof}[Proof of \Cref{thm:einstein-rigidity}]
By \Cref{prop:recentered-conformal-limit}, the conformal metric
\[
        \widehat g_\infty=f_\infty^{4/(n-1)}g
\]
carries a nonzero parallel spinor, after identifying the \(s=0\) twisting line
\(F=X\times\C\) with its trivial parallel unit section. The standard
curvature identity for parallel spinors then implies that
\(\widehat g_\infty\) is Ricci-flat; see
\cite[Cor. 2.8]{BourguignonHijaziMilhoratMoroianuMoroianu}. Since
\(f_\infty\) is a generalized ground
state and
\[
        \scal_g\equiv -\frac{4n}{n-1}\lambda_0,
\]
the conformal geometric-rigidity criterion
\Cref{thm:conformal-geometric-rigidity} applies. Therefore
\[
        \Ric_g=-\frac{4\lambda_0}{n-1}g.
\]
If \(\lambda_0>0\), the same criterion gives constant sectional curvature
\[
        \sec_g=-\frac{4\lambda_0}{(n-1)^2}.
\]
Since \((X,g)\) is complete and simply connected, it is isometric to real
hyperbolic space with this normalization.
\end{proof}

\subsection{Proofs of the main results}
\label{subsec:proof-main-results}

We first record the additional flatness observation needed in the
zero-spectrum case of \Cref{thm:main-two-form}.

\begin{lemma}
\label{lem:top-degree-flatness}
Let \((M^{2m},g_M)\) be a closed connected oriented Ricci-flat Riemannian
manifold, and let \(\pi:X\to M\) be its universal cover. Suppose that there
exists a closed two-form \(\omega\in\Omega^2(M)\) such that
\[
        \int_M\omega^m\neq 0,
        \qquad
        \pi^*\omega \text{ is exact.}
\]
Then \((M,g_M)\) is flat.
\end{lemma}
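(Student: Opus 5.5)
The plan is to use the Cheeger--Gromoll splitting theorem for closed Ricci-flat manifolds, and then show that the top-degree hypothesis on \(\omega\) kills the compact simply connected factor.

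\emph{Step 1: splitting.} By Cheeger--Gromoll, the universal cover \((X,g)\) splits isometrically as
\[
        X=\R^k\times N,
\]
where \(N\) is a closed, simply connected, Ricci-flat manifold. By the Bieberbach-type structure theorem for closed Ricci-flat manifolds, a finite cover \(M'\) of \(M\) is diffeomorphic to \(T^k\times N\), with a metric that is locally a product. The hypotheses pass to \(M'\): the pullback \(\omega'\) of \(\omega\) still has exact lift, and \(\int_{M'}\omega'^m=\deg\cdot\int_M\omega^m\neq0\). So it suffices to show that \(N\) is a point, i.e.\ \(k=2m\); then \(X=\R^{2m}\) and \(g_M\) is flat.

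\emph{Step 2: the class of \(\omega'\).} Suppose \(\dim N=\ell>0\). Since \(N\) is simply connected and closed, \(\ell\geq2\). The universal cover of \(M'\cong T^k\times N\) is \(\R^k\times N\), and the covering map induces an isomorphism on \(H^2\) of the \(N\)-factor, with \(H^1(N)=0\). By Künneth,
\[
        H^2(T^k\times N)=H^2(T^k)\oplus H^2(N).
\]
Pulling back to \(\R^k\times N\), which is homotopy equivalent to \(N\), kills the \(H^2(T^k)\) part and is injective on \(H^2(N)\). Exactness of the lift therefore forces \([\omega']=\mathrm{pr}_{T^k}^*a\) for some \(a\in H^2(T^k;\R)\). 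Then
\[
        [\omega']^m=\mathrm{pr}_{T^k}^*(a^m),
\]
and \(a^m\in H^{2m}(T^k)=0\) because \(2m>k\). Hence \(\int_{M'}\omega'^m=0\), a contradiction. So \(\ell=0\) and \(X\) is flat Euclidean space.

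\emph{Main obstacle.} The delicate point is Step 1: I need the finite-cover splitting \(M'\cong T^k\times N\), with \(N\) compact and simply connected, rather than merely the splitting of the universal cover. I would cite the Cheeger--Gromoll structure theorem (as in Fischer--Wolf) for this.

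If I want to avoid the finite-cover statement, an alternative is to argue directly on \(X=\R^k\times N\), which deformation retracts onto \(N\). Every closed 2-form on \(M\) whose lift is exact has a representative whose lift is cohomologous to zero on \(N\). One could then try to average \(\omega\) over the torus-like directions, or use harmonic representatives, which are parallel along \(\R^k\), and compute \(\int_M\omega^m\) as an integral over a fundamental domain. However, the Künneth argument on the finite cover is cleaner, so that is the route I would take.
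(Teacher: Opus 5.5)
Your proposal is correct and follows essentially the same route as the paper: pass to a Fischer--Wolf finite cover $T^k\times N$, use K\"unneth together with exactness of the lift to write the class of the pulled-back form as $\mathrm{pr}_{T^k}^*a$, and then use $H^{2m}(T^k)=0$ for $k<2m$ to force $N$ to be a point. The only difference is cosmetic. You argue by contradiction, assuming $\dim N>0$. The paper instead deduces directly that $a^m\neq 0$, hence $k\geq 2m$.
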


\begin{proof}
By the Fischer--Wolf structure theorem for compact Ricci-flat manifolds \cite[Theorem~4.5]{FischerWolf}, there exists a finite Riemannian covering \(p:(\widehat M,\widehat g)\longrightarrow (M,g_M)\) such that
\[
        (\widehat M,\widehat g)\cong (T^q\times N,g_{T^q}\oplus h),
\]
where \(T^q\) is a flat torus and \(N\) is a compact simply connected Ricci-flat manifold.

Let \(\widehat\omega:=p^*\omega\).
Then
\[
        \int_{\widehat M}\widehat\omega^m
        =
        \deg(p)\int_M\omega^m\neq 0.
\]
Thus \([\widehat\omega]^m\neq 0\) in \(H^{2m}(\widehat M;\R)\).

The universal cover of \(T^q\times N\) is \(\R^q\times N\).
Let
\[
        \widehat\pi:\R^q\times N\longrightarrow \widehat M
\]
be the universal covering map. Since \(p\circ\widehat\pi\) identifies with the universal covering map of \(M\), and since \(\pi^*\omega\) is exact, we have \(\widehat\pi^*[\widehat\omega]=\widehat\pi^*p^*[\omega]=0\).
Since \(N\) is simply connected, the K\"unneth formula gives
\[
        H^2(T^q\times N;\R)
        \cong
        H^2(T^q;\R)\oplus H^2(N;\R),
\]
while
\[
        H^2(\R^q\times N;\R)
        \cong
        H^2(N;\R).
\]
Under these identifications, the pullback map to the universal cover is the projection onto the \(H^2(N;\R)\)-summand.

Because \(\widehat\pi^*[\widehat\omega]=0\), the \(H^2(N;\R)\)-component of \([\widehat\omega]\) is zero. 
Therefore there exists a class \(a\in H^2(T^q;\R)\) such that \([\widehat\omega]=\operatorname{pr}_1^*a\), where \(\operatorname{pr}_1:T^q\times N\to T^q\) is the projection.
It follows that \([\widehat\omega]^m=\operatorname{pr}_1^*(a^m)\).
Since \([\widehat\omega]^m\neq 0\), we have \(a^m\neq 0\) in \(H^{2m}(T^q;\R)\).
Therefore \(H^{2m}(T^q;\R)\neq 0\).
This forces \(q\geq 2m\).
But \(\dim(T^q\times N)=2m\), so \(q=2m\) and \(N\) is a point. 
Hence \(\widehat M\) is a flat torus.
Since \(p:\widehat M\to M\) is a finite Riemannian covering and \(\widehat M\) is flat, \(M\) is flat as well.
\end{proof}

\begin{proof}[Proof of \Cref{thm:main-two-form}]
By \Cref{prop:Gamma-s-index-and-spinors}, there exist \(s_j\in(0,1]\), \(s_j\to0\), and nonzero smooth \(L^2\)-spinors
\[
        \psi_j\in C^\infty(X,\Sigma_gX\otimes (X\times\C))
        \cap L^2(X,\Sigma_gX\otimes (X\times\C))
\]
such that \(D_{s_j}\psi_j=0\).
After normalizing each \(\psi_j\) in \(L^2\),
\Cref{thm:scalar-upper-bound} applied to this sequence gives
\[
        \inf_M\operatorname{scal}_{g_M}
        \leq
        -\frac{4(2m)}{2m-1}\lambda_0(X,g).
\]
Assume now that equality holds. 
By \Cref{thm:scalar-upper-bound},
\[
        \operatorname{scal}_g\equiv
        -\frac{4n}{n-1}\lambda_0(X,g).
\]
Thus \Cref{thm:einstein-rigidity} applies and gives
\[
        \operatorname{Ric}_g
        =
        -\frac{4\lambda_0(X,g)}{2m-1}g.
\]
If \(\lambda_0>0\), the same theorem shows that \((X,g)\) is real
hyperbolic with constant sectional curvature
\[
        -\frac{4\lambda_0(X,g)}{(2m-1)^2}.
\]
Finally, if \(\lambda_0=0\) and \(\int_M\omega^m\neq0\), then \(g_M\) is
Ricci-flat and \Cref{lem:top-degree-flatness} shows that it is flat. Hence
\((X,g)\) is isometric to Euclidean space.
\end{proof}

\begin{proof}[Proof of \Cref{thm:main-zero-in-the-spectrum}]
Let \(D\) be the untwisted spin Dirac operator on \((X,g)\). Since
\(0\in\sigma_{L^2}(D)\),
\Cref{prop:zero-in-spectrum-produces-almost-harmonic} provides smooth
compactly supported spinors \(\psi_j\) such that
\[
        \|\psi_j\|_{L^2}=1,
        \qquad
        \|D\psi_j\|_{L^2}\longrightarrow0.
\]
Applying \Cref{thm:scalar-upper-bound} with \(\omega=0\), \(\eta=0\), and
\(s_j=0\) proves the scalar-curvature inequality and its scalar equality
statement. If equality holds, \Cref{thm:einstein-rigidity} gives
\[
        \Ric_g=-\frac{4\lambda_0}{n-1}g.
\]
If \(\lambda_0=0\), this is equivalent to \(g_M\) being Ricci-flat.
Conversely, if \(g_M\) is Ricci-flat, then \(\scal_{g_M}\equiv0\), so
equality holds.

If \(\lambda_0>0\), \Cref{thm:einstein-rigidity} shows that \((X,g)\) is
real hyperbolic with constant sectional curvature
\[
        -\frac{4\lambda_0}{(n-1)^2}.
\]
Conversely, this curvature identity gives
\[
        \scal_g=-\frac{4n}{n-1}\lambda_0,
\]
and hence equality in the scalar-curvature estimate.
\end{proof}

\begin{proof}[Proof of \Cref{cor:compact-enlargeable-rigidity}]
By \Cref{prop:compact-enlargeability-zero-spectrum}, the spin Dirac operator
on \((X,g)\) has zero in its \(L^2\)-spectrum. Hence
\Cref{thm:main-zero-in-the-spectrum}, in the virtually spin form of
\Cref{rem:zero-in-the-spectrum-virtually-spin}, gives the scalar-curvature
inequality.

Suppose equality holds. If \(\lambda_0>0\), the same theorem shows that
\((M,g_M)\) has constant sectional curvature
\(-4\lambda_0/(n-1)^2\). If \(\lambda_0=0\), then \(g_M\) is Ricci-flat.
Applying the Gromov--Lawson rigidity theorem
\cite[Theorem~A]{GromovLawson1980} on a finite spin cover, which is again
compactly enlargeable, shows that the lifted metric is flat. Thus \(g_M\) is
flat as well.
\end{proof}

\bibliographystyle{amsplain}
\bibliography{references}

@article{LongReidVirtuallySpinning,
  author  = {Long, D. D. and Reid, A. W.},
  title   = {Virtually spinning hyperbolic manifolds},
  journal = {Proceedings of the Edinburgh Mathematical Society},
  volume  = {63},
  number  = {2},
  pages   = {305--313},
  year    = {2020},
  doi     = {10.1017/S0013091519000324}
}

@book{doCarmoRiemannianGeometry,
  author = {do Carmo, Manfredo P.},
  title = {Riemannian Geometry},
  series = {Mathematics: Theory \& Applications},
  publisher = {Birkh{\"a}user Boston},
  year = {1992}
}

@misc{Liu26,
  author        = {Liu, Daoqiang},
  title         = {Bottom spectrum, vertical $\widehat{A}$-cowaist and scalar curvature rigidity},
  year          = {2026},
  eprint        = {2605.18269},
  archivePrefix = {arXiv},
  note          = {\newline arXiv:2605.18269},
  doi           = {10.48550/arXiv.2605.18269},
  url           = {https://arxiv.org/abs/2605.18269}
}

@article{Rosenberg1983,
  author  = {Rosenberg, Jonathan},
  title   = {{C}$^*$-algebras, positive scalar curvature, and the {N}ovikov conjecture},
  journal = {Publications Math{\'e}matiques de l'IH{\'E}S},
  volume  = {58},
  year    = {1983},
  pages   = {197--212},
  doi     = {10.1007/BF02953775}
}

@unpublished{SchickCoarseIndex,
  author = {Schick, Thomas},
  title  = {Lectures on coarse index theory},
  year   = {2012},
  note   = {Lecture notes, Cortona},
  url    = {https://wwwuser.gwdguser.de/~tschick/publ/Cortona_12_coarse_index.pdf}
}

@article {Ono1988,
    AUTHOR = {Ono, Kaoru},
     TITLE = {The scalar curvature and the spectrum of the {L}aplacian of
              spin manifolds},
   JOURNAL = {Math. Ann.},
  FJOURNAL = {Mathematische Annalen},
    VOLUME = {281},
      YEAR = {1988},
    NUMBER = {1},
     PAGES = {163--168},
      ISSN = {0025-5831,1432-1807},
   MRCLASS = {58G25 (35P15)},
  MRNUMBER = {944610},
MRREVIEWER = {Hubert\ Gollek},
       DOI = {10.1007/BF01449223},
       URL = {https://doi.org/10.1007/BF01449223},
}

@article{LiWangPositiveSpectrumII,
  author  = {Li, Peter and Wang, Jiaping},
  title   = {Complete manifolds with positive spectrum, {II}},
  journal = {Journal of Differential Geometry},
  volume  = {62},
  number  = {1},
  pages   = {143--162},
  year    = {2002},
  doi     = {10.4310/jdg/1090425532}
}

@article{Brooks1981,
  author  = {Brooks, Robert},
  title   = {The fundamental group and the spectrum of the Laplacian},
  journal = {Commentarii Mathematici Helvetici},
  volume  = {56},
  number  = {4},
  pages   = {581--598},
  year    = {1981},
  doi     = {10.1007/BF02566228}
}

@misc{DiCerboDranishnikovJauhari2025,
  author        = {Di Cerbo, Luca F. and Dranishnikov, Alexander and Jauhari, Ekansh},
  title         = {Curvature, macroscopic dimensions, and symmetric products of surfaces},
  eprint        = {2503.01779},
  archivePrefix = {arXiv},
  year          = {2025},
  note          = {arXiv:2503.01779},
  url           = {https://arxiv.org/abs/2503.01779}
}

@misc{DiCerboDranishnikovJauhari2026,
  author        = {Di Cerbo, Luca F. and Dranishnikov, Alexander and Jauhari, Ekansh},
  title         = {Symplectically aspherical {K}{\"a}hler manifolds, scalar curvature, and the fundamental group},
  year          = {2026},
  eprint        = {2607.05170},
  archivePrefix = {arXiv},
  note          = {arXiv:2607.05170},
  url           = {https://arxiv.org/abs/2607.05170}
}

@article{MathaiLowDegree,
  author  = {Mathai, Varghese},
  title   = {The {Novikov} conjecture for low degree cohomology classes},
  journal = {Geometriae Dedicata},
  volume  = {99},
  pages   = {1--15},
  year    = {2003},
  doi     = {10.1023/A:1024941020306}
}

@article{GuoMathai,
  author  = {Guo, Hao and Mathai, Varghese},
  title   = {Higher localised {$\widehat{A}$}-genera for proper actions and applications},
  journal = {Journal of Functional Analysis},
  volume  = {283},
  number  = {12},
  pages   = {109695},
  year    = {2022},
  doi     = {10.1016/j.jfa.2022.109695},
  url     = {https://doi.org/10.1016/j.jfa.2022.109695}
}

@article{ConnesGromovMoscovici,
  author  = {Connes, Alain and Gromov, Mikha{\"i}l and Moscovici, Henri},
  title   = {Group cohomology with {Lipschitz} control and higher signatures},
  journal = {Geometric and Functional Analysis},
  volume  = {3},
  number  = {1},
  pages   = {1--78},
  year    = {1993},
  doi     = {10.1007/BF01895513}
}

@misc{BeiDiverioTrapani,
  author        = {Bei, Francesco and Claudon, Beno{\^i}t and Diverio, Simone and Trapani, Stefano},
  title         = {Weak {K}\"ahler hyperbolicity is birational},
  year          = {2024},
  eprint        = {2406.01734},
  archivePrefix = {arXiv},
  note          = {arXiv:2406.01734, to appear in \emph{Algebraic Geometry}},
  url           = {https://arxiv.org/abs/2406.01734}
}

@misc{FBeiDiverioTrapani,
  author        = {Bei, Francesco and Diverio, Simone and Trapani, Stefano},
  title         = {Geometric effects of hyperbolic cohomology classes on {K}ähler manifolds (with an appendix by {B}eno{\^i}t {C}laudon)},
  year          = {2025},
  eprint        = {2506.09907},
  archivePrefix = {arXiv},
  note          = {arXiv:2506.09907},
  url           = {https://arxiv.org/abs/2506.09907}
}

@article {HankeSchick,
    AUTHOR = {Hanke, Bernhard and Schick, Thomas},
     TITLE = {The strong {N}ovikov conjecture for low degree cohomology},
   JOURNAL = {Geom. Dedicata},
  FJOURNAL = {Geometriae Dedicata},
    VOLUME = {135},
      YEAR = {2008},
     PAGES = {119--127},
      ISSN = {0046-5755,1572-9168},
   MRCLASS = {19K56},
  MRNUMBER = {2413333},
MRREVIEWER = {Peter\ Haskell},
       DOI = {10.1007/s10711-008-9266-9},
       URL = {https://doi.org/10.1007/s10711-008-9266-9},
}

@article{GromovLawson1980,
  author  = {Gromov, Mikhael and Lawson, Jr., H. Blaine},
  title   = {Spin and scalar curvature in the presence of a fundamental group. {I}},
  journal = {Annals of Mathematics. Second Series},
  volume  = {111},
  number  = {2},
  pages   = {209--230},
  year    = {1980}
}

@book{AschenbrennerFriedlWilton,
  author    = {Aschenbrenner, Matthias and Friedl, Stefan and Wilton, Henry},
  title     = {3-Manifold Groups},
  series    = {EMS Series of Lectures in Mathematics},
  publisher = {European Mathematical Society},
  address   = {Z\"{u}rich},
  year      = {2015},
  doi       = {10.4171/154}
}

@article{Marques2012,
  author  = {Marques, Fernando Cod\'{a}},
  title   = {Deforming three-manifolds with positive scalar curvature},
  journal = {Annals of Mathematics. Second Series},
  volume  = {176},
  number  = {2},
  pages   = {815--863},
  year    = {2012},
  doi     = {10.4007/annals.2012.176.2.3}
}

@article{HankeSchickEnlargeability,
  author  = {Hanke, Bernhard and Schick, Thomas},
  title   = {Enlargeability and index theory: infinite covers},
  journal = {$K$-Theory},
  volume  = {38},
  number  = {1},
  pages   = {23--33},
  year    = {2007},
  doi     = {10.1007/s10977-007-9004-3}
}

@article{Wang2008HyperbolicSpace,
  author  = {Wang, Xiaodong},
  title   = {Harmonic Functions, Entropy, and a Characterization of the Hyperbolic Space},
  journal = {The Journal of Geometric Analysis},
  volume  = {18},
  number  = {1},
  pages   = {272--284},
  year    = {2008},
  doi     = {10.1007/s12220-007-9001-z}
}

@misc{WangZhu2026,
  author        = {Wang, Jinmin and Zhu, Bo},
  title         = {Scalar curvature, sharp bottom spectrum and geometric rigidity},
  year          = {2026},
  eprint        = {2606.11957},
  archivePrefix = {arXiv},
  note          = {arXiv:2606.11957},
  url           = {https://arxiv.org/abs/2606.11957}
}

@article{LedrappierWang,
    AUTHOR = {Ledrappier, Fran\c{c}ois and Wang, Xiaodong},
     TITLE = {An integral formula for the volume entropy with applications
              to rigidity},
   JOURNAL = {J. Differential Geom.},
  FJOURNAL = {Journal of Differential Geometry},
    VOLUME = {85},
      YEAR = {2010},
    NUMBER = {3},
     PAGES = {461--477},
      ISSN = {0022-040X,1945-743X},
   MRCLASS = {53C24 (53C20)},
  MRNUMBER = {2739810},
MRREVIEWER = {Nelia\ Charalambous},
       URL = {http://projecteuclid.org/euclid.jdg/1292940691},
}

@book{Bal06,
    AUTHOR = {Ballmann, Werner},
     TITLE = {Lectures on {K}\"{a}hler manifolds},
    SERIES = {ESI Lectures in Mathematics and Physics},
 PUBLISHER = {European Mathematical Society (EMS), Z\"{u}rich},
      YEAR = {2006},
     PAGES = {x+172},
      ISBN = {978-3-03719-025-8; 3-03719-025-6},
   MRCLASS = {32Q15 (53C55)},
  MRNUMBER = {2243012},
MRREVIEWER = {Joel\ Fine},
       DOI = {10.4171/025},
       URL = {https://doi.org/10.4171/025},
}

@article{WangProperCocompactIndex,
  author  = {Wang, Hang},
  title   = {{$L^2$-index formula for proper cocompact group actions}},
  journal = {J. Noncommut. Geom.},
  volume  = {8},
  number  = {2},
  pages   = {393--432},
  year    = {2014},
  doi     = {10.4171/JNCG/160}
}

@article{Cheng1975,
  author  = {Cheng, Shiu Yuen},
  title   = {Eigenvalue comparison theorems and its geometric applications},
  journal = {Mathematische Zeitschrift},
  volume  = {143},
  number  = {3},
  pages   = {289--297},
  year    = {1975},
  doi     = {10.1007/BF01214381},
  mrnumber = {0378001}
}

@article{MunteanuWang2024,
  author  = {Munteanu, Ovidiu and Wang, Jiaping},
  title   = {Bottom spectrum of three-dimensional manifolds with scalar curvature lower bound},
  journal = {Journal of Functional Analysis},
  volume  = {287},
  number  = {2},
  pages   = {Paper No. 110457},
  year    = {2024},
  doi     = {10.1016/j.jfa.2024.110457},
  mrnumber = {4736650}
}

@misc{MunteanuWang2026,
  author        = {Munteanu, Ovidiu and Wang, Jiaping},
  title         = {Bottom spectrum and parabolicity of 3-manifolds with scalar curvature lower bound},
  year          = {2026},
  eprint        = {2607.06508},
  archivePrefix = {arXiv},
  note          = {arXiv:2607.06508},
  url           = {https://arxiv.org/abs/2607.06508}
}

@article{Gromov1991,
  author  = {Gromov, Mikhael},
  title   = {{K}{\"a}hler hyperbolicity and {$L^2$}-{H}odge theory},
  journal = {Journal of Differential Geometry},
  volume  = {33},
  number  = {1},
  year    = {1991},
  pages   = {263--292},
  doi     = {10.4310/jdg/1214446039}
}

@incollection{GromovFourLectures,
    AUTHOR = {Gromov, Misha},
     TITLE = {Four lectures on scalar curvature},
 BOOKTITLE = {Perspectives in scalar curvature. {V}ol. 1},
     PAGES = {1--514},
 PUBLISHER = {World Sci. Publ., Hackensack, NJ},
      YEAR = {[2023] \copyright 2023},
      ISBN = {978-981-124-998-3; 978-981-124-935-8; 978-981-124-936-5},
   MRCLASS = {53C23 (53-02 53C21)},
  MRNUMBER = {4577903},
MRREVIEWER = {Mikhail\ G.\ Katz},
       DOI = {10.1142/9789811273223_0001},
       URL = {https://doi.org/10.1142/9789811273223_0001},
}

@article{ChernoffEssentialSelfAdjointness,
  author  = {Chernoff, Paul R.},
  title   = {Essential self-adjointness of powers of generators of hyperbolic equations},
  journal = {Journal of Functional Analysis},
  volume  = {12},
  number  = {4},
  year    = {1973},
  pages   = {401--414},
  doi     = {10.1016/0022-1236(73)90003-7}
}

@book{ReedSimonI,
  author    = {Reed, Michael and Simon, Barry},
  title     = {Methods of Modern Mathematical Physics. {I}: Functional Analysis},
  edition   = {Revised and enlarged},
  publisher = {Academic Press},
  address   = {New York},
  year      = {1980},
  isbn      = {978-0-12-585050-6}
}

@book{ChavelEigenvalues,
  author    = {Chavel, Isaac},
  title     = {Eigenvalues in {R}iemannian Geometry},
  series    = {Pure and Applied Mathematics},
  volume    = {115},
  publisher = {Academic Press},
  address   = {Orlando, FL},
  year      = {1984},
  note      = {Including a chapter by Burton Randol; with an appendix by Jozef Dodziuk}
}

@article{StrichartzLaplacian,
  author  = {Strichartz, Robert S.},
  title   = {Analysis of the {L}aplacian on the complete {R}iemannian manifold},
  journal = {Journal of Functional Analysis},
  volume  = {52},
  number  = {1},
  year    = {1983},
  pages   = {48--79},
  doi     = {10.1016/0022-1236(83)90090-3}
}

@book{LawsonMichelsohn,
  author    = {Lawson, H. Blaine and Michelsohn, Marie-Louise},
  title     = {Spin Geometry},
  series    = {Princeton Mathematical Series},
  volume    = {38},
  publisher = {Princeton University Press},
  address   = {Princeton, NJ},
  year      = {1989},
  isbn      = {978-0-691-08542-5}
}

@book{BourguignonHijaziMilhoratMoroianuMoroianu,
  author    = {Bourguignon, Jean-Pierre and Hijazi, Oussama and Milhorat, Jean-Louis and Moroianu, Andrei and Moroianu, Sergiu},
  title     = {A Spinorial Approach to {R}iemannian and Conformal Geometry},
  series    = {EMS Monographs in Mathematics},
  publisher = {European Mathematical Society},
  address   = {Z\"{u}rich},
  year      = {2015},
  doi       = {10.4171/136},
  isbn      = {978-3-03719-136-1}
}

@article{LenzStollmannVeselic2009,
  author  = {Lenz, Daniel and Stollmann, Peter and Veseli{\'c}, Ivan},
  title   = {The {Allegretto}--{Piepenbrink} theorem for strongly local {Dirichlet} forms},
  journal = {Documenta Mathematica},
  volume  = {14},
  pages   = {167--189},
  year    = {2009},
  doi     = {10.4171/DM/269}
}

@article{Hijazi1986,
  author  = {Hijazi, Oussama},
  title   = {A conformal lower bound for the smallest eigenvalue of the {D}irac operator and {K}illing spinors},
  journal = {Communications in Mathematical Physics},
  volume  = {104},
  number  = {1},
  year    = {1986},
  pages   = {151--162},
  doi     = {10.1007/BF01210797}
}

@article{FriedrichKim2001,
  author  = {Friedrich, Thomas and Kim, Eui Chul},
  title   = {Some remarks on the {H}ijazi inequality and generalizations of the {K}illing equation for spinors},
  journal = {Journal of Geometry and Physics},
  volume  = {37},
  number  = {1--2},
  year    = {2001},
  pages   = {1--14},
  doi     = {10.1016/S0393-0440(99)00049-2}
}

@article{Friedrich1989,
  author  = {Friedrich, Thomas},
  title   = {On the conformal relation between twistors and {K}illing spinors},
  journal = {Rendiconti del Circolo Matematico di Palermo. Serie II. Supplemento},
  number  = {22},
  year    = {1989},
  pages   = {59--75}
}

@book{GilbargTrudinger,
  author    = {Gilbarg, David and Trudinger, Neil S.},
  title     = {Elliptic Partial Differential Equations of Second Order},
  edition   = {2},
  series    = {Classics in Mathematics},
  publisher = {Springer},
  address   = {Berlin},
  year      = {2001},
  note      = {Reprint of the 1983 second edition}
}

@article{CalderbankGauduchonHerzlich,
  author  = {Calderbank, David M. J. and Gauduchon, Paul and Herzlich, Marc},
  title   = {Refined {K}ato inequalities and conformal weights in {R}iemannian geometry},
  journal = {Journal of Functional Analysis},
  volume  = {173},
  number  = {1},
  year    = {2000},
  pages   = {214--255},
  doi     = {10.1006/jfan.2000.3563}
}

@article{Davaux2003,
    AUTHOR = {Davaux, H\'{e}l\`{e}ne},
     TITLE = {An optimal inequality between scalar curvature and spectrum of
              the {L}aplacian},
   JOURNAL = {Math. Ann.},
  FJOURNAL = {Mathematische Annalen},
    VOLUME = {327},
      YEAR = {2003},
    NUMBER = {2},
     PAGES = {271--292},
      ISSN = {0025-5831,1432-1807},
   MRCLASS = {58J50 (46L10 53C21 58J20 58J30)},
  MRNUMBER = {2015070},
MRREVIEWER = {Thomas\ Schick},
       DOI = {10.1007/s00208-003-0451-8},
       URL = {https://doi.org/10.1007/s00208-003-0451-8},
}

@article{Brinkmann1925,
  author  = {Brinkmann, H. W.},
  title   = {Einstein spaces which are mapped conformally on each other},
  journal = {Mathematische Annalen},
  volume  = {94},
  number  = {1},
  year    = {1925},
  pages   = {119--145},
  doi     = {10.1007/BF01208647}
}

@incollection{KuhnelRademacher2008,
  author    = {K{\"u}hnel, Wolfgang and Rademacher, Hans-Bert},
  title     = {Conformal transformations of pseudo-{R}iemannian manifolds},
  booktitle = {Recent Developments in Pseudo-Riemannian Geometry},
  editor    = {Alekseevsky, Dmitri V. and Baum, Helga},
  series    = {ESI Lectures in Mathematics and Physics},
  pages     = {261--298},
  publisher = {European Mathematical Society},
  address   = {Z{\"u}rich},
  year      = {2008},
  doi       = {10.4171/051-1/8}
}

@misc{WangZhu2024,
  author        = {Wang, Jinmin and Zhu, Bo},
  title         = {Sharp bottom spectrum and scalar curvature rigidity},
  year          = {2024},
  eprint        = {2408.08245},
  archivePrefix = {arXiv},
  note          = {arXiv:2408.08245},
  url           = {https://arxiv.org/abs/2408.08245}
}

@article{FischerColbrieSchoen1980,
  author  = {Fischer-Colbrie, Doris and Schoen, Richard},
  title   = {The structure of complete stable minimal surfaces in 3-manifolds of non-negative scalar curvature},
  journal = {Communications on Pure and Applied Mathematics},
  volume  = {33},
  number  = {2},
  year    = {1980},
  pages   = {199--211},
  doi     = {10.1002/cpa.3160330206}
}

@book{Besse,
  author    = {Besse, Arthur L.},
  title     = {Einstein Manifolds},
  series    = {Ergebnisse der Mathematik und ihrer Grenzgebiete, 3. Folge},
  volume    = {10},
  publisher = {Springer-Verlag},
  address   = {Berlin},
  year      = {1987},
  isbn      = {978-3-540-15279-8}
}

@article{FischerWolf,
  author  = {Fischer, Arthur E. and Wolf, Joseph A.},
  title   = {The structure of compact {R}icci-flat {R}iemannian manifolds},
  journal = {Journal of Differential Geometry},
  volume  = {10},
  number  = {2},
  year    = {1975},
  pages   = {277--288}
}

@article {JostZuo,
    AUTHOR = {Jost, J\"urgen and Zuo, Kang},
     TITLE = {Vanishing theorems for {$L^2$}-cohomology on infinite
              coverings of compact {K}\"ahler manifolds and applications in
              algebraic geometry},
   JOURNAL = {Comm. Anal. Geom.},
  FJOURNAL = {Communications in Analysis and Geometry},
    VOLUME = {8},
      YEAR = {2000},
    NUMBER = {1},
     PAGES = {1--30},
      ISSN = {1019-8385,1944-9992},
   MRCLASS = {32L20 (32Q30 58J20)},
  MRNUMBER = {1730897},
MRREVIEWER = {Philippe\ P.\ Eyssidieux},
       DOI = {10.4310/CAG.2000.v8.n1.a1},
       URL = {https://doi.org/10.4310/CAG.2000.v8.n1.a1},
}

@article {CaoXavier,
    AUTHOR = {Cao, Jianguo and Xavier, Frederico},
     TITLE = {K\"ahler parabolicity and the {E}uler number of compact
              manifolds of non-positive sectional curvature},
   JOURNAL = {Math. Ann.},
  FJOURNAL = {Mathematische Annalen},
    VOLUME = {319},
      YEAR = {2001},
    NUMBER = {3},
     PAGES = {483--491},
      ISSN = {0025-5831,1432-1807},
   MRCLASS = {53C21 (32Q05 32Q15 53C55)},
  MRNUMBER = {1819879},
MRREVIEWER = {Mar\'ia\ J.\ Druetta},
       DOI = {10.1007/PL00004444},
       URL = {https://doi.org/10.1007/PL00004444},
}

@article {BDET,
    AUTHOR = {Bei, Francesco and Diverio, Simone and Eyssidieux, Philippe
              and Trapani, Stefano},
     TITLE = {Weakly {K}\"ahler hyperbolic manifolds and the
              {G}reen-{G}riffiths-{L}ang conjecture},
   JOURNAL = {J. Reine Angew. Math.},
  FJOURNAL = {Journal f\"ur die Reine und Angewandte Mathematik. [Crelle's
              Journal]},
    VOLUME = {807},
      YEAR = {2024},
     PAGES = {257--297},
      ISSN = {0075-4102,1435-5345},
   MRCLASS = {53C55 (32Q45)},
  MRNUMBER = {4698497},
MRREVIEWER = {Riccardo\ Piovani},
       DOI = {10.1515/crelle-2023-0094},
       URL = {https://doi.org/10.1515/crelle-2023-0094},
}

@article{AtiyahL2Index,
  author  = {Atiyah, Michael F.},
  title   = {Elliptic operators, discrete groups and von {N}eumann algebras},
  journal = {Ast\'{e}risque},
  number  = {32--33},
  year    = {1976},
  pages   = {43--72}
}

\end{document}